\numberwithin{equation}{section}
\begin{document}
\title{Sums of regular selfadjoint operators in Hilbert-$C^*$-modules}

\author{Matthias Lesch}
\address{Mathematisches Institut,
Universit\"at Bonn,
Endenicher Allee 60,
53115 Bonn,
Germany}
\email{ml@matthiaslesch.de, lesch@math.uni-bonn.de}
\urladdr{www.matthiaslesch.de, www.math.uni-bonn.de/people/lesch}
\thanks{Both authors were partially supported by the 
        Hausdorff Center for Mathematics, Bonn}

\author{Bram Mesland}
\address{Mathematisches Institut,
Universit\"at Bonn,
Endenicher Allee 60,
53115 Bonn,
Germany}
\email{mesland@math.uni-bonn.de}

%47A10: spectrum, resolvent
%47A60: functional calculus
\subjclass[2010]{46L08; 19K35; 46C50;	47A10; 47A60}
\keywords{Hilbert-$C^*$-module, regular operator, KK-theory}

\begin{abstract}
    We introduce a notion of weak anticommutativity for a pair $(S,T)$
    of self-adjoint regular operators in a Hilbert $C^{*}$-module $E$.
    We prove that the sum $S+T$ of such pairs is self-adjoint and
    regular on the intersection of their domains. A similar result
    then holds for the sum $S^{2}+T^{2}$ of the squares. We show that
    our definition is closely related to the Connes-Skandalis
    positivity criterion in $KK$-theory. As such we weaken a sufficient
    condition of Kucerovsky for representing the Kasparov product. Our
    proofs indicate that our conditions are close to optimal.
\end{abstract}

\maketitle

\tableofcontents

%!TEX root = ./main.tex
% vim: tw=78 guifont=Monospace\ 14  foldmethod=manual noai nocin nosi indentexpr=

%****************************************************************
% LOCAL MACRO DEFINITIONS
%****************************************************************
\newcommand{\Cf}{\textit{Cf.~}}
\renewcommand{\cf}{\textit{cf.~}}

% See ML comments under Issues below
%\newcommand{\Dom}[1]{\operatorname{Dom}(#1)}
%\newcommand{\LDom}[1]{\operatorname{Dom}\bigl(#1\bigr)}
%\newcommand{\NDom}{\operatorname{Dom}}
\newcommand{\Dom}[1]{\sD(#1)}
\newcommand{\LDom}[1]{\sD\bigl(#1\bigr)}
\newcommand{\NDom}{\sD}
\newcommand{\domS}{\sD(S)}
\newcommand{\domT}{\sD(T)}

\newcommand{\refone}{\textup{(1)}}
\newcommand{\reftwo}{\textup{(2)}}
\newcommand{\refthree}{\textup{(3)}}
\newcommand{\ctr}[1]{\bigl[ #1 \bigr] }
\newcommand{\Sl}{(S+\gl)\ii}
\newcommand{\Tl}{(T+\gl)\ii}
\newcommand{\Tm}{(T+\mu)\ii}

% convenience: inner product, with both entries being equal
\newcommand{\dinn}[1]{\langle #1, #1 \rangle} 
\newcommand{\dlinn}[1]{\langle\langle #1 \rangle\rangle}

\newcommand{\Cltwo}{\C\ell(2)}
\newcommand{\haS}{\hat S}
\newcommand{\haT}{\hat T}

\newcommand{\mainclosedness}{\textup{(1)}}
\newcommand{\mainsymmetry}{\textup{(2)}}
\newcommand{\mainstrong}{\textup{(3)}}
\newcommand{\maincore}{\textup{(4)}}
%****************************************************************
% END OF LOCAL MACRO DEFINITIONS
%****************************************************************

%**************************************************************************
\section{Introduction}
\label{s.intro}
%**************************************************************************

A well-known problem in functional analysis is to describe
the domain and the spectral properties of the sum of two densely
defined closed operators.  In general nothing can be said as the
intersection of the domains can be just $\{0\}$. The problem
has a rich history and therefore in the next two sections we will summarize
what is known in two quite different contexts. Thereafter we will
describe the main theme of the paper.

\subsection{Banach space history of the problem}\label{ss.intro.BSH}
\begin{sloppypar}
Given two densely defined un\-bounded operators $A,B$ in some Banach
space $X$ with a joint ray, e.g. $i(0,\infty)$ or $(-\infty,0)$, in the
resolvent set. A basic problem is to give criteria which ensure the following
to hold:
\end{sloppypar}
\begin{thmenum}
\item $\ovl{A+B}+\gl$ is invertible for $-\gl$ in the said ray and large enough.
\item $A+B$ is a closed operator with domain $\Dom A \cap \Dom B$. 
\end{thmenum}
One of the first comprehensive papers on the problem
\cite{DPG1975} was motivated by evolution equations
\[
   \underbrace{-\pl_t^2}_{A} u + \underbrace{\Lambda(t)}_{B} u + \gl u = f,
\]
with $\Lambda(t)$ being a family of partial differential operators
parametrized by $t$.    

The validity of (1) means that the equation $A x + B x + \gl x = y$
is \emph{weakly} solvable for $\gl$ large, that is given $y$ there
is a sequence $x_n\in \Dom A\cap \Dom B$ such that $x_n\to x$ and 
$(A+B+\gl)x_n\to y$. (1) and (2) together mean that the equation
$A x + B x + \gl x = y$ is \emph{strongly} solvable for $\gl$
large, that is given $y$ there exists a solution $x\in\Dom A\cap \Dom B$.

One, and essentially the only approach to the problem in the
Banach space context rests on the idea of viewing $A+B+\gl$
as a (operator valued) function of $B$ and writing the
resolvent $(A+B+\gl)\ii$ as the Dunford integral
\begin{equation} \label{eq.intro.1}
    P_\gl := \frac{1}{2\pi i} \int_\Gamma (z+\gl+A)\ii \cdot (z-B)\ii dz,
\end{equation}
where $\Gamma$ is a suitable contour encircling the spectrum of
$B$. This approach works well only for \emph{sectorial} operators
with spectral angle $<\pi/2$. \Eqref{eq.intro.1} equals
the resolvent only if $A$ and $B$ are resolvent commuting and
so it is not surprising that in the literature certain commutator
conditions are formulated to ensure that \Eqref{eq.intro.1}
gives an appropriate approximation to the resolvent
\cite{DPG1975,DV1987,LT1987,Fuh1993,MP1997,KW2001,PS2007,Roi2016}.

\subsection{$KK$-theory history of the problem}
In the completely different context of $KK$-theory \cite{Kas:OKF} one encounters
the problem of regular sums of operators when one tries to construct the
notoriously complicated Kasparov product at the level of unbounded cycles
\cite{Mes2014,BMS2016,MR2016,KaaLes:LGP,KaaLes:SFU}. 

Here, the operators in
question act on a Hilbert-$(A,B)$-bimodule $E$, which is a complete inner product
module over the $C^*$-algebra $B$.  For an unbounded $B$-linear operator
$T$ in $E$ it makes sense to talk about self-adjointness and hence one might
be tempted to believe that everything is as nice as in a Hilbert space. This,
unfortunately (or fortunately), is not the case as the axiom of
\emph{regularity} does not come for free: analogously as in the Banach space
context above an unbounded self-adjoint $B$-linear operator $S$ in $E$ is
called regular if $S\pm \gl$ has dense range for one and hence for all
$\gl\in\C\setminus\R$. If $B=\C$ then regularity is equivalent to 
self-adjointness. In general, it is an additional feature, \cf
\cite{BaaJul:TBK, Wor:UEA, Pierrot2007, KaaLes:LGP}.
%\mpar{!! Did you mean this
%paper of Woronowicz?}

An \emph{unbounded Kasparov module} is a triple $(\sA, E, D)$ consisting of a Hilbert 
$(A,B)$-
bimodule $E$ and a self-adjoint regular operator $D$, with compact resolvent, that commutes 
with the dense subalgebra $\sA\subset A$ \emph{up to bounded operators}. 
In the construction of the tensor product of two such modules $(\sA,X,S_{X})$ 
and $(\sB, Y, T_{Y})$ one encounters two problems.

The first one is the definition of the operator $T=1\otimes_{\nabla} T_{Y}$ on 
the module $E:=X\otimes_{B}Y$. Since $T$ does not commute with $B$, one needs to 
incorporate extra data in the form of a \emph{connection} $\nabla$. This is discussed 
in great generality in \cite{MR2016} and in this paper we will not be concerned with 
this construction.

Once a well-defined self-adjoint and regular connection operator $T$ on $E$ 
has been constructed from$T_{Y}$, the second problem that needs to be addressed is
self-adjointness and regularity of the sum $D=S+T$, where $S=S_{X}\otimes 1$.
%In the construction of the Kasparov product mentioned above one encounters
%the following problem. Given self-adjoint and regular operators $S,T$
%in the Hilbert-$B$-module $E$. 
The goal is then to formulate an appropriate smallness condition
on the graded commutator $ST+TS$ such that $S+T$ is self-adjoint and
regular on $\Dom S\cap \Dom T$.

The Banach space results mentioned in
the previous paragraph do not (at least not a priori) apply to this
situation as in general self-adjoint operators are sectorial with spectral angle
$\pi/2$. Hence the sum of the spectral angles of $S$ and $T$ is $\pi$
which is exactly the threshold for the validity of regularity results
for sectorial operators, \cf Theorem \ref{p.CBSR.1} below. The methods in the Hilbert module case therefore resemble much more
the methods known from Hilbert space theory. 

\subsection{The main results}\label{ss.intro.MR}

Here we offer the following result which contains all previously known
results in this context as special cases \cite{Mes2014,KaaLes:LGP,MR2016}.

\begin{theorem}\label{p.intro.1}
Let $S,T$ be self-adjoint and regular operators in the Hilbert-$B$-module $E$.
Assume that 
\begin{thmenum}
\item there are constants $C_0,C_1,C_2>0$ such that the
form estimate
\begin{equation}
\label{eq.intro.2}
\inn{ [S,T] x, [S,T]x } \le C_0 \cdot \inn{ x,x }
            +  C_1 \cdot \inn{ Sx,Sx } + C_2 \cdot \inn{ Tx,Tx }
\end{equation}
holds for all $x\in \sF := \sF(S,T) = \bigsetdef{x\in\domS\cap\domT}{%
                              Sx\in\domT, Tx\in \domS}$.
This is an inequality in the $C^*$-algebra $B$.
\item  There is a core $\sE\subset \domT$ such that
$ (S+\gl)\ii \bl \sE \br \subset \sF(S,T)$ for $\gl\in i\R$, $|\gl|\ge
\gl_0$.
\end{thmenum}
Then $S+T$ is self-adjoint and regular on $\Dom S\cap \Dom T$.
That is for $z\in\C\setminus \R$ and $y\in E$ the equation
\[
     Sx+Tx+ z\cdot x = y
\]
has a unique (strong) solution $x\in\Dom S\cap\Dom T$. %The resolvent $(S+T+\mu)\ii $ is an
%element of the closed two-sided ideal of $\sL(E)$ generated by $(S+\gl)\ii (T+\gl)\ii $.

%Furthermore, the conditions on $S,T$ are symmetric in $S,T$,
%i.e. it follows that also $\Tl\bl \Dom S\br \subset \sF(S,T)$
%for $\gl\in i\R$ large enough.

%Moreover, $\sF(S,T)$ is a core for $S+T$. That is, for $x\in\Dom S\cap\Dom T$
%there is a sequence $(x_n)\subset \sF(S,T)$ with $x_n\to x, Sx_n\to S,
%Tx_n\to T$.
\end{theorem}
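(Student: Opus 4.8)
The statement amounts to showing that $S+T$, with domain $\domS\cap\domT$, is a closed self-adjoint regular operator; strong solvability for every $z\in\C\setminus\R$ is then exactly the assertion that $\C\setminus\R$ lies in the resolvent set. The plan is to reduce this to surjectivity at two purely imaginary points. First, $S+T$ is symmetric on $\domS\cap\domT$, and a short computation in $B$ using symmetry gives
\[
   \inn{(S+T+z)x,(S+T+z)x}\;=\;\inn{(S+T+\mathrm{Re}\,z)x,(S+T+\mathrm{Re}\,z)x}\;+\;(\mathrm{Im}\,z)^{2}\,\inn{x,x}\;\ge\;(\mathrm{Im}\,z)^{2}\,\inn{x,x}
\]
for $x\in\domS\cap\domT$ and $z\in\C\setminus\R$. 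This yields uniqueness of strong solutions and the a-priori bound $\|x\|\le|\mathrm{Im}\,z|^{-1}\|(S+T+z)x\|$. Hence it is enough to show that $S+T+i\mu$ and $S+T-i\mu$ map $\domS\cap\domT$ \emph{onto} $E$ for one $\mu\ge\gl_{0}$: combined with the estimate this makes $S+T\pm i\mu$ bijections $\domS\cap\domT\to E$ with bounded inverse, so $S+T$ is closed on $\domS\cap\domT$, and by the standard criterion for symmetric operators in Hilbert $C^{*}$-modules (for the regularity half one may also invoke the local--global principle of \cite{KaaLes:LGP}) it is self-adjoint and regular, with resolvent defined at every non-real point.

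It remains to construct a bounded right inverse of $S+T+i\mu$ with range in $\domS\cap\domT$ (and likewise at $-i\mu$; the hypotheses are symmetric under $T\mapsto-T$). Write $R_{\mu}:=(S+i\mu)^{-1}$; regularity of $S$ gives, via the bounded functional calculus, $\|R_{\mu}\|\le\mu^{-1}$ and $\|SR_{\mu}\|\le1$. The role of the hypotheses is as follows: for $x$ in the core $\sE$ of (2) we have $R_{\mu}x\in\sF(S,T)$, so (1) applies to $\xi=R_{\mu}x$ and bounds $\inn{[S,T]\xi,[S,T]\xi}$ by $C_{0}\inn{\xi,\xi}+C_{1}\inn{S\xi,S\xi}+C_{2}\inn{T\xi,T\xi}$; inserting this into the resolvent--commutator identity $[R_{\mu},T]=-R_{\mu}[S,T]R_{\mu}$ and using $TR_{\mu}x=R_{\mu}Tx-R_{\mu}[S,T]R_{\mu}x$ together with the bounds on $R_{\mu},SR_{\mu}$ yields a self-improving inequality whose solution, for $\mu$ large, gives $\|TR_{\mu}x\|\le C'\mu^{-1}(\|x\|+\|Tx\|)$ on $\sE$. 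By closedness of $T$ this extends to all of $\domT$, so $R_{\mu}$ maps $\domT$ into $\domT$ and $TR_{\mu}(T+i\mu)^{-1}$ is a \emph{bounded} operator on $E$ of norm $O(\mu^{-1})$. Out of $R_{\mu}$, $(T+i\mu)^{-1}$ and such commutator terms --- following, in the Hilbert-module setting, the Dunford-integral scheme of \S\ref{ss.intro.BSH} with the naive integral $\tfrac1{2\pi i}\int_{\Gamma}(S+i\mu+w)^{-1}(T-w)^{-1}\,dw$ corrected by commutator terms --- one assembles a bounded operator $P_{\mu}$ on $E$ with $P_{\mu}E\subset\domS\cap\domT$ and $(S+T+i\mu)P_{\mu}=1+Z_{\mu}$, the defect $Z_{\mu}$ being, modulo harmless terms, an expression built from $R_{\mu}[S,T]R_{\mu}$; one shows $\|Z_{\mu}\|\to0$ as $\mu\to\infty$, fixes $\mu$ with $\|Z_{\mu}\|<1$, and takes $P_{\mu}(1+Z_{\mu})^{-1}$. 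Once $S+T$ is under control, the companion statement for $S^{2}+T^{2}$ follows by applying the result to the pair $\haS,\haT$ on the doubling $E\otimes\Cltwo$, for which $(\haS+\haT)^{2}=S^{2}+T^{2}$.

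The hard part is twofold. First, (1) is available only on $\sF(S,T)$, which need not be a core, so a priori the mixed operators $[R_{\mu},T]$, $TR_{\mu}$ and their analogues entering $P_{\mu}$ are controlled only on $R_{\mu}(\sE)$; promoting them to genuine bounded, adjointable operators on all of $E$ requires closed-graph and density arguments that in the $C^{*}$-module setting must be run with $C^{*}$-norm estimates, and hypothesis (2) is exactly what makes (1) applicable along a core. Second --- and this is the real obstacle --- one must prove $\|Z_{\mu}\|\to0$ while working \emph{exactly at the threshold} of the sectorial machinery: two self-adjoint operators have spectral angle $\pi/2$, summing to the critical value $\pi$ (cf. Theorem \ref{p.CBSR.1}), so no crude estimate of the integrand is summable and mere smallness of $[S,T]$ does not suffice. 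One is forced to use the \emph{relative} form bound --- the presence of $C_{1}\inn{Sx,Sx}$ and $C_{2}\inn{Tx,Tx}$, not only $C_{0}\inn{x,x}$ --- and to track every power of $\mu$ in the self-improving inequality so that the defect genuinely vanishes in the limit. This bookkeeping, showing that $C_{1}$ and $C_{2}$ do not obstruct $\mu\to\infty$, is the heart of the proof and explains why the hypotheses turn out to be close to optimal.
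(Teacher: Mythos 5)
Your reduction to surjectivity of $S+T\pm i\mu$, the a priori lower bound, and the bound $\|T(S+i\mu)\ii (T+i\mu)\ii\|=O(\mu\ii)$ obtained from the self-improving inequality on $(S+\gl)\ii\bl\sE\br$ are all sound and parallel \eqref{eq.main.4} and Lemma \ref{p.DC.2}. The gap is that the entire content of the theorem sits in the step you leave as a black box: producing a bounded operator $P_\mu$ with $P_\mu E\subset\domS\cap\domT$ and $(S+T+i\mu)P_\mu=1+Z_\mu$ with a controllable defect. Worse, the starting point you propose, the Dunford integral $\frac1{2\pi i}\int_\Gamma(S+i\mu+w)\ii(T-w)\ii\,dw$, does not converge: $S$ and $T$ are self-adjoint, so along any contour separating the (real, shifted) spectra neither resolvent factor decays at infinity; the integrand is merely bounded, not integrable. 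This is exactly the threshold phenomenon you yourself point to, and adding commutator corrections cannot repair a divergent leading term. Section \ref{s.CBS} shows that a Da Prato--Grisvard integral can be salvaged only by passing to the squares, as in Theorem \ref{p.CBSR.4}, where the integrand involves $(z+\gl+S^2)\ii$ and $(z-T^2)\ii$ (spectral angle $0$) with the correction factor $(S+T-i\gl)$ inserted --- and even that route is only sketched, not used for the proof.

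The mechanism the paper actually uses is different and more elementary: set $A_\gl:=S+T+\frac{TS}{\gl}$ on $\sF(S,T)$ and observe that $A_\gl+\gl=\frac1\gl(T+\gl)(S+\gl)$, so $(A_\gl+\mu)\ii$ is automatically a bounded adjointable operator with range equal to $\sF(S,T)\subset\domS\cap\domT$ (Lemmas \ref{p.RA.1} and \ref{p.RA.2}, after the symmetrization of the axioms in Prop.~\ref{p.DC.5}). A direct expansion of $\inn{(A_\gl+\mu)x,(A_\gl+\mu)x}$ combined with the form estimate \eqref{eq.intro.2} gives $\gl$-uniform bounds on $S(A_\gl+\mu)\ii$, $T(A_\gl+\mu)\ii$ and $[S,T](A_\gl+\mu)\ii$, and the identity of Lemma \ref{anticomm} shows that the defect $\frac{TS}{\gl}(A_\gl+\mu)\ii=1-(S+T+\mu)(A_\gl+\mu)\ii$ tends to $0$ only \emph{strongly} on $\domS$ --- it does not tend to $0$ in norm, so even the weaker goal you set (a norm-small $Z_\mu$) is not what the working argument delivers; strong convergence plus uniform boundedness gives dense range of $S+T+\mu$ on $\sF(S,T)$, and self-adjointness and regularity then follow from the standard criterion for symmetric operators. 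Finally, your closing remark on $S^2+T^2$ is not right as stated: $(\hat S+\hat T)^2=\hat S^2+\hat T^2+[\hat S,\hat T]_+$, and identifying $\LDom{(S+T)^2}$ with $\Dom{S^2}\cap\Dom{T^2}$ is itself a nontrivial step (Theorem \ref{p.appl.1}).
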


A more elaborate formulation can be found below in Theorem \ref{p.main.7}.
Our main application of Theorem \ref{p.main.7} is to the calculation of the
Kasparov product of unbounded cycles in $KK$-theory. 

Historically, the main tool for handling the Kasparov product has consisted of
a guess-and-check procedure pioneered by Connes-Skandalis \cite{ConSka}, and
later refined by Kucerovsky \cite{Kuc:KKP}.  This entails checking a set of
three sufficient conditions to determine whether a cycle $(\sA,E,D)$ is the
product of the cycles $(\sA, X,S_{X})$ and $(\sB, Y, T_{Y})$.  Although this
avoids the aforementioned hard problems, it leaves one with the burden of
coming up with a good guess for $D$ in every particular instance, as well as
proving that $(\sA, E,D)$ is a cycle.

In recent years, significant progress has been made on the constructive
approach to finding $D$. In this setting, the first sufficient condition of
Kucerovsky is satisfied whenever $D=S+T$ and $T$ is a connection operator
relative to $T_{Y}$. The second condition will be satisfied whenever
$\Dom{S+T}\subset\domS$. In previous work the condition 
\[
    \dinn{ [S,T]x }\leq C(\dinn{ x } +\dinn{ Sx }),
\]
was imposed to ensure self-adjointness of the sum $S+T$. This condition
implies that 
\[
    \inn{ (S+T)x,Sx}+\inn{ Sx,(S+T)x} \geq -\kappa\dinn{ x },
\]
for some $\kappa>0$, which is the third sufficient condition appearing in
\cite[Theorem 13]{Kuc:KKP}.  The form estimate \eqref{eq.intro.2} is in
general not compatible with Kucerovsky's estimate. In Section \ref{s.KK} we
prove that it is nonetheless sufficient to construct the Kasparov product.
% In \cite{Kuc: LIFT}, Kucerovsky writes\newline

%\emph{[This work] does not give a general and completely explicit construction for the product cycle, 
%but the problem of giving such a construction has guided our work.}\newline

%In recent years, much progress has been made on the constructive approach REFS and
%the conditions used to prove self-adjointness of the involved sum operator were always slightly stronger
%than what is needed to satisfy Kucerovsky's sufficient condition for a product. The conditions of Theorem
%\ref{p.main.7} are in general not compatible with Kucerovsky's positivity condition. In section \ref{s.KK} we
%prove that they are nonetheless sufficient to construct the Kasparov product.

\begin{theorem} \label{p.intro.2}
Let $(\sA, X,S_{X})$ and $(\sB, Y,T_{Y})$ be unbounded Kasparov modules for
$(A,B)$ and $(B,C)$ respectively and let $E:=X\otimes_{B}Y$ and
$S:=S_{X}\otimes 1$. Suppose that $T:\domT\to E$ is an odd self-adjoint
regular connection operator for $T_{Y}$ such that
\begin{enumerate}
%\item there is a dense $\sB$-submodule $\sX\subset\domS\subset X$ 
%for which the algebraic tensor product $\mathcal{X}\otimes^{\textnormal{alg}}_{\sB}\sD(T_{Y})$ is a core 
%for $T$ and for all homogenous elements $x\in\sX$ and all $y\in \Dom{T_{Y}}$ the operator
%\begin{align*}
 %     y &\mapsto \gamma(x)\otimes T_{Y}y-T(x\otimes y)
      %z\otimes y &\mapsto T_{Y}\inn{ x,z} y - \inn{ \gamma(x), T(z\otimes y)}
%\end{align*}
%defines an element of $\sL(Y,E)$ ;
\item for all $a\in\sA$ we have $a:\domT\to\domT$ and $[T,a]\in\sL(E)$;
\item $(S,T)$ is a weakly anticommuting pair.
\end{enumerate}
Then $(\sA,E,S+T)$ is an unbounded Kasparov module that represents the Kasparov 
product of $(X,S_{X})$ and $(Y,T_{Y})$.
\end{theorem}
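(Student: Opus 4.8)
The plan is to deduce Theorem~\ref{p.intro.2} from Theorem~\ref{p.main.7} (equivalently Theorem~\ref{p.intro.1}) together with the Kucerovsky criterion \cite[Theorem~13]{Kuc:KKP}. So the proof naturally splits into two halves: first, verifying that $(\sA,E,S+T)$ is an unbounded Kasparov module, and second, checking Kucerovsky's three conditions so as to identify it with the Kasparov product.

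For the first half, I would argue as follows. By hypothesis $T$ is an odd self-adjoint regular connection operator for $T_Y$, and $S=S_X\otimes 1$ is self-adjoint and regular on $E=X\otimes_B Y$ (this is standard for the exterior tensor product, and oddness/gradings are inherited). Since $(S,T)$ is a weakly anticommuting pair, the hypotheses of Theorem~\ref{p.main.7} are met, so $D:=S+T$ is self-adjoint and regular on $\domS\cap\domT$, and $D$ is odd. The subalgebra $\sA$ commutes with $S$ up to bounded operators because $(\sA,X,S_X)$ is a Kasparov module; by assumption~(1) we have $a\colon\domT\to\domT$ and $[T,a]\in\sL(E)$ for all $a\in\sA$. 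Hence for $a\in\sA$, $a$ preserves $\domS\cap\domT=\Dom D$ and $[D,a]=[S,a]+[T,a]\in\sL(E)$. It remains to check that $D$ has locally compact resolvent, i.e. $a(D+i)\ii\in\sK(E)$ for $a\in\sA$; here I would use the now-standard connection argument: $T_Y$ has compact resolvent, the connection $\nabla$ is such that $T=1\otimes_\nabla T_Y$ has locally compact resolvent on $E$ (this is part of what "connection operator" guarantees, \cf \cite{MR2016}), $S$ likewise has locally compact resolvent relative to $\sA$ since $S_X$ does, and the resolvent identity $(D+i)\ii-(T+i)\ii=-(D+i)\ii S(T+i)\ii$ plus a standard interpolation/boundedness estimate (using that $\Dom D\subset\domS$, which follows from Theorem~\ref{p.main.7}) transfers local compactness from $T$ to $D$.

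For the second half, I would verify the three sufficient conditions of \cite[Theorem~13]{Kuc:KKP}. The connection condition (Kucerovsky's first condition) holds precisely because $T$ is a connection operator for $T_Y$ and $D=S+T$ differs from $T$ by $S=S_X\otimes 1$ which acts "from the left"; this is exactly the setup described before the statement of Theorem~\ref{p.intro.2} in the introduction. The domain condition (Kucerovsky's second condition) requires $\Dom D\subset\domS$, which is part of the conclusion of Theorem~\ref{p.main.7}. The positivity condition is the one that in earlier work forced Kucerovsky's estimate $\dinn{[S,T]x}\le C(\dinn{x}+\dinn{Sx})$; here I would instead invoke the section of the paper (Section~\ref{s.KK}) showing that the weak anticommutativity form estimate \eqref{eq.intro.2}, while not implying Kucerovsky's estimate, still yields a semiboundedness inequality of the form $\inn{(S+T)x,Sx}+\inn{Sx,(S+T)x}\ge-\kappa\dinn{x}$ on a core. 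I expect this to be the main obstacle: one has to extract, from the quadratic form bound on $[S,T]$ together with the regularity of $D$ just established (so that $\Dom D\subset\domS$ and $S(D+i)\ii$ is bounded), a genuine operator inequality on a core for $D$. The strategy is to expand $\inn{Dx,Sx}+\inn{Sx,Dx}=\inn{(S+T)x,Sx}+\inn{Sx,(S+T)x}=2\inn{Sx,Sx}+\inn{Tx,Sx}+\inn{Sx,Tx}$ and rewrite $\inn{Tx,Sx}+\inn{Sx,Tx}$ in terms of $\inn{[S,T]x,\cdot}$-type pairings, then apply \eqref{eq.intro.2} together with a Cauchy–Schwarz/completion-of-squares argument to absorb the $\inn{Sx,Sx}$ and $\inn{Tx,Tx}$ terms and obtain the lower bound $-\kappa\dinn{x}$; the constants $C_1,C_2$ must be controlled, which is where the precise form of the weak anticommutativity hypothesis is used.

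Once these three conditions are in place, Kucerovsky's theorem gives that $(\sA,E,S+T)$ represents the Kasparov product $[(X,S_X)]\otimes_B[(Y,T_Y)]$, completing the proof.
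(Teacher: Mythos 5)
Your first half (self-adjointness and regularity from Theorem \ref{p.main.7}, bounded commutators with $\sA$, local compactness of the resolvent) is in the spirit of the paper, although the resolvent identity you propose, $(D+i)\ii-(T+i)\ii=-(D+i)\ii S(T+i)\ii$, is problematic because $(T+i)\ii$ maps $E$ onto $\domT$, which need not lie in $\Dom{D}=\domS\cap\domT$; the paper instead gets local compactness from $a(S+\gl)\ii(T+\mu)\ii\in\sK(E)$ and the \emph{norm} convergence $(A_\gl+\mu)\ii\to(S+T+\mu)\ii$ of Theorem \ref{p.RA.5}.

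The second half contains a genuine gap, and it is precisely the point the paper is at pains to address. You propose to verify Kucerovsky's positivity condition by expanding, for $x\in\sF(S,T)$,
\begin{equation*}
  \inn{Dx,Sx}+\inn{Sx,Dx}=2\,\dinn{Sx}+\inn{[S,T]x,x},
\end{equation*}
and then absorbing the commutator term via \eqref{eq.intro.2} and Cauchy--Schwarz. But \eqref{eq.DC.2} together with \eqref{eq.intro.2} only gives
\begin{equation*}
  \pm\inn{[S,T]x,x}\le K\ii\bl C_0\dinn{x}+C_1\dinn{Sx}+C_2\dinn{Tx}\br+K\dinn{x},
\end{equation*}
and when $C_2>0$ the resulting term $-K\ii C_2\dinn{Tx}$ has nothing to be absorbed into: $\dinn{Tx}$ is not controlled by $\dinn{x}$ and $\dinn{Sx}$. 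This is exactly why the paper states that the form estimate \eqref{eq.intro.2} ``is in general not compatible with Kucerovsky's estimate'' and why the remark following Theorem \ref{thm.unbddprd} notes that condition (iii) of \cite[Theorem 13]{Kuc:KKP} may fail once there is a nontrivial relative bound with respect to $T$. (Your route does work in the special case $C_2=0$, which is the previously known situation of \cite{KaaLes:SFU,MR2016}.)

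The paper's actual argument avoids Kucerovsky's theorem altogether. It invokes the Connes--Skandalis criterion in the \emph{weakened} form of Theorem \ref{thm.bddprd}, where positivity is only required modulo $A$-locally compact operators and only with some constant $\kappa<2$, and it proves this at the level of the bounded transforms: after a rescaling $(S,T)\mapsto(\gl S,\gl T)$ (harmless up to homotopy), one shows $[\chi(S),\chi(D)]\ge-\kappa$ modulo $A$-locally compact operators for $\chi=\tfrac2\pi\arctan$. This requires the interpolation estimate for $P_z=(1+|D|)^{-z}\ovl{[S,T]}(1+|D|)^{z-1}$, the resulting bound $\inn{\ovl{[S,T]}x,x}\le C(\dinn{x}+\inn{|D|x,x})$ of Proposition \ref{estimates}, the integral representation of $\arctan$, and the Clifford-algebra commutator manipulations of Lemma \ref{p.KK.10} and Proposition \ref{localestimate}. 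None of this machinery is dispensable if one wants the full generality of weak anticommutativity, so the proposal as written does not prove the theorem.
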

%Conditions (i) and (ii) of this Theorem require that one leg of the product operator arises from a geometrically 
%motivated \emph{connection construction}. This has been developed in full generality in \cite{MR2016}. 
%Condition (iii) requires that this connection operator weakly anticommutes with $S=S_{X}\otimes 1$. 
We note that the statement that the sum operator $D=S+T$ is a $KK$-cycle is
part of this result.  The proof consists of showing that weak anticommutation
implies a weakened version of the sufficient conditions of Connes-Skandalis.
In the constructive setting, this supersedes the result of Kucerovsky and
covers a wider range of examples, provided that we construct our operator as a
sum.
%Theorem \ref{} can be 
%viewed as an explanation for the usual sufficient conditions on a Kasparov product:
%One of them says the product arises from a geometrically motivated connection
%construction, the other one says it arises as a sum of weakly anticommtuing operators.
%{
%\Huge\bf More text
%}
%Explain geometric and analytic meaning of bounded product conditions 
%at the unbounded level and how Kucerovsky's result is only a first step towards this. In his second paper Kucerovsky says :

%\emph{This does not give a general and completely explicit construction for the product cycle, 
%but the problem of giving such a construction has guided our work... This basic problem of how to find a product cycle given two unbounded cycles is clearly of great interest. This paper proves the existence of the product cycle, but unfortunately in a somewhat indirect way, via bounded KK-theory. Even though there are many different unbounded cycles that are equivalent to any given cycle, it is probably impossible to write down a general formula for the product of two unbounded cycles, and therefore it is not likely that a completely constructive proof of the existence of the product cycle can be given. Of course, it may be both preferable and possible to prove the results of this paper entirely within the unbounded picture, for example by giving a way to perturb two unbounded cycles so that the product cycle is given by the sum of the perturbed operators.}

\subsection{Outline}\label{ss.intro.Out}

The paper is organized as follows: 
In Section \ref{s.main} we first fix some notation and then introduce the
decisive notion of a weakly anticommuting pair of self-adjoint regular
operators. We put this definition into context and give a detailed comparison
to previous such notions.  Furthermore, by employing Clifford matrices we show
how to switch back and forth between weakly \emph{anticommuting} operators and
weakly \emph{commuting} operators.  Thereafter we formulate our main Theorem
on sums of self-adjoint regular operators followed by an outline of the
structure of the proof.

The proof of the main Theorem on sums is spread over the technical Sections
\ref{s.DC} and \ref{s.RA}. In Section \ref{s.appl} we provide some
applications. The squares $S^2$ and $T^2$ are sectorial operators with spectral angle $0$.
So they cry for a Dore-Venni type Theorem.  Although, they do not fulfill the
prerequisites for any of the Dore-Venni type Theorems we know of we can
nevertheless prove that $S^2+T^2$ is self-adjoint and regular on
$\Dom{S^2}\cap\Dom{T^2}$.  As another application we show that the sum Theorem
can be iterated to handle triples, and hence an arbitrary number of weakly
anticommuting summands. This is motivated by the second author's program of
constructing an appropriate category of $KK$-cycles. 

In the survey type Section \ref{s.CBS} we elaborate a bit more on the Banach
space approach to sums of operators and outline an alternative approach to our
main Theorem along the lines of the original Da Prato-Grisvard Dunford
integral \Eqref{eq.intro.1}, the main result being Theorem \ref{p.CBSR.4}.

The details on Theorem \ref{p.intro.2} can be found in 
Section \ref{s.KK}. Finally, the appendix contains a few useful commutator
identities which are needed in the proofs.

\subsection{Acknowledgment} The project started with discussions during the
Hausdorff Trimester programme ``Non-commutative Geometry and its
Applications'' which took place at the Hausdorff Research Institute for
Mathematics in Bonn, September 1 - December 19, 2014. Both authors gratefully
acknowledge the hospitality of this institution.

We also would like to thank Magnus Goffeng for his constant interest in this work,
in particular for the invitation to give lecture series on the
topic during the Master Class ``Sums of self-adjoint operators: Kasparov
products and applications'' at the University of Copenhagen, August 22--26,
2016. Furthermore ML would like to thank Elmar Schrohe for invitations to
Hannover and for his input on the Banach space aspects of the problem.

We thank Koen van den Dungen and Adam Rennie for motivating discussions
on the topic. We thank the anonymous referee for their careful reading of the manuscript.

%**************************************************************************
\section{Weakly anticommuting operators and sums}
\label{s.main}
%**************************************************************************
\subsection{Notation}\label{ss.main.not} We assume familarity with $C^*$-algebras, 
Hilbert-$C^*$-modules and unbounded and regular operators in Hilbert-$C^*$-modules
\cite{Lan:HCM,KaaLes:LGP}.

In the sequel $E$ will always be a Hilbert-$C^*$-module over the $C^*$-algebra
$B$.  By $\sL(E)$ we denote the $C^*$-algebra of bounded adjointable module
endomorphisms. By $S,T$ we denote self-adjoint regular operators in $E$. Domains
of (semi)regular operators are denoted by $\Dom{\ldots}$. Note that these are
always dense submodules.  Unless otherwise said, $\gl,\mu$ denote resolvent
parameters which are purely imaginary but bounded below by some $\gl_0>0$; the
specific value of $\gl_0$ is irrelevant and may vary from statement to
statement.  In norm estimates $C_1, C_2,\ldots$ denote generic constants; they
may also vary from statement to statement.

The basic problem we address is: if $ST+TS$ is ``small'' then $S+T$ should be
self-adjoint and regular. 

In the context of sectorial operators in certain Banach spaces this is a well
studied problem with numerous publications, e.g.  \cite{DPG1975,DV1987,PS2007}
and the references therein.

\subsection{Weakly anticommuting operators}
\label{ss.WAO}
For a pair of operators $S,T$ in a Hilbert-$B$-module $E$, we denote by
$[S,T]$ the \emph{anticommutator} $ST+TS$. This is in line with conventions
regarding graded Hilbert-$C^{*}$-modules and graded commutators in case the
operators $S$ and $T$ are both odd for the grading. Note that if either $S$ or
$T$ is not everywhere defined, then neither is the anticommutator $[S,T]$.

%\mpar{!!}
However, in order to work with commutators and anti-commutators at the same
time, it will be convenient to also use the notation
\[
   [S,T]_{\pm}=ST\pm TS=\pm [T,S]_{\pm}.
\]
Moreover, we will use the convention $[S,T]=[S,T]_+$. Some commutator
identities are collected in the Appendix \ref{s.A}.
%\mpar{!!}

\begin{dfn}\label{p.main.1}
Let $S,T$ be self-adjoint and regular operators in the Hilbert-$B$-module $E$
and set 
\begin{equation}\label{eq.main.2}
    \sF := \sF(S,T) = \bigsetdef{x\in\domS\cap\domT}{%
                              Sx\in\domT, Tx\in \domS}.
\end{equation}
The pair $(S,T)$ is called \emph{weakly anticommuting} if 
\begin{thmenum}
\item there are constants $C_0,C_1,C_2>0$ such that for all $x\in \sF$ the
form estimate
\begin{equation}\label{eq.main.1}
\inn{ [S,T] x, [S,T]x } \le C_0 \cdot \inn{ x,x }
            +  C_1 \cdot \inn{ Sx,Sx } + C_2 \cdot \inn{ Tx,Tx }
\end{equation}
holds in $B$.
\item  There is a core $\sE\subset \domT$ such that
$ (S+\gl)\ii \bl \sE \br \subset \sF(S,T)$ for $\gl\in i\R$, $|\gl|\ge
\gl_0$.
\end{thmenum}
The pair is called \emph{weakly commuting} if the estimate \Eqref{eq.main.1}
holds with the commutator $ST-TS$ instead of the anticommutator
$[S,T]$.
\end{dfn}
\begin{remark}\label{p.main.3}
\sitem This notion of weak (anti)commutativity is slightly more general than
corresponding notions in \cite[Assumption 7.1]{KaaLes:LGP},
\cite[Appendix A]{MR2016}, \cite[Sec. 3]{Les2016}. \Cf Section
\ref{ss.CPNWA} below.

\sitem One should also compare weak (anti)commutativity
to the commutator conditions appearing in earlier Banach space
literature on sums of operators; see Sections \ref{ss.intro.BSH}
and \ref{s.CBS}.

\sitem By definition 
\[
\sF(S,T) = (S+\gl)\ii \bl \domT \br \cap (T+\gl)\ii\bl\Dom{S}\br.
\]  If \reftwo\
holds with $\sE=\domT$ then this implies the equality
\begin{equation}\label{eq.main.3}
\begin{split}
  \sF(S,T) & = (S+\gl)\ii \bl \domT \br  \\
           & = \ran \bl (S+\gl)\ii\cdot (T+\mu)\ii \br,
\end{split}
\quad \gl,\mu\in i\R, |\gl|,|\mu|\ge \gl_0.
\end{equation}
By Theorem \plref{p.main.7} below indeed
\reftwo\ does hold with $\domT$ instead of $\sE$ as well.

\sitem 
It follows immediately from \reftwo\ of the definition that
$\sF(S,T)$ is a dense submodule of $E$. What is not immediately
obvious but will be proved below in Cor. \ref{p.DC.6}
is that $\sF(S,T)$ is a core for $S$ as well as for $T$. Theorem
\ref{p.main.7} below says even more. Namely, $\sF(S,T)$ is
a \emph{joint} core for $S$ and $T$ in the sense that for
$x\in\domS\cap\domT$ there is a sequence $x_n$ such that
$x_n\to x, S x_n\to Sx, Tx_n\to Tx$. There are concrete
formulas for the construction of $x_n$.

%for $x\in E$ one puts $x_\gl:= \gl^2\cdot \Sl\cdot \Tl x$,
%$\gl\in i\R, |\gl|\ge \gl_0>0$. Then $x_\gl$ converges to $x$
%as $\gl\to i\infty$ and if $x\in\domS$ also $Sx_\gl \to Sx$
%resp. $Tx_\gl \to Tx$ if $x\in \domT$.

\sitem Self-adjointness implies that for $\gl\in i\R$ the operator $S+\gl$ is
bounded below by $|\gl|$. Hence the estimate \Eqref{eq.main.1} in the
definition implies for $x\in\sF(S,T)$ and $\gl,\mu\in i\R$ with
$|\gl|,|\mu|\ge \gl_0>0$ we have
\begin{equation}\label{eq.main.4}
\begin{split}
        \| [S,T]x \| &\le C \cdot (\|(S+\gl)x\| +  \| (T+\mu)x\| )\\
       & \le \frac{C}{|\mu|}\cdot \|(T+\mu)(S+\gl)x\| +
             \frac{C}{|\gl|}\cdot \|(S+\gl)(T+\mu)x\|.
\end{split}
\end{equation}
\end{remark}

\subsection{Comparison to previous notions of weak (anti)commutativity}
\label{ss.CPNWA}

We show that weak \emph{anti}commutativity in the sense of \cite[Appendix
A]{MR2016} implies weak \emph{anti}commutativity in the sense of
Def.~\ref{p.main.1}.  Similarly,  \cite[Assumption 7.1]{KaaLes:LGP} implies
weak \emph{commutativity} in the sense of Def.~\ref{p.main.1}. We denote by
$\tau\in \big\{+,-\big\}$ a fixed choice of sign.

\begin{prop}\label{p.main.4} Suppose that for all $\gl\in i\R, |\gl|\ge
\gl_0>0$ large enough
\begin{thmenum}
  \item there is a core $\sE\subset \domT$ for $T$ such that
  $(S+\gl)\ii(\sE) \subset \domS\cap\domT$,
\item $T(S+\gl)\ii (\sE) \subset \domS$, 
\item $[S,T]_{\tau} (S+\gl)\ii$ extends by continuity to
    a bounded (adjointable) map $E\to E$.
\end{thmenum}
Then $S,T$ are weakly anticommuting (if $\tau=+$) resp. weakly commuting (if
$\tau=-$) in the sense of Def. \ref{p.main.1} with the constant $C_2$ in
\Eqref{eq.main.1} being $0$.
\end{prop}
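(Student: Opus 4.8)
The plan is to verify the two conditions of Definition~\ref{p.main.1} directly, using as the core $\sE$ the one already furnished by the hypotheses. Condition~\reftwo\ is the quick part: fix $\gl\in i\R$ with $|\gl|\ge\gl_{0}$, let $e\in\sE$ and put $x:=\Sl e$. Hypothesis~\refone\ gives $x\in\domS\cap\domT$, hypothesis~\reftwo\ gives $Tx=T\Sl e\in\domS$, and the resolvent identity $Sx=S\Sl e=e-\gl\,\Sl e=e-\gl x$ shows $Sx\in\domT$, since $e\in\sE\subset\domT$ and $x\in\domT$. Hence $\Sl(\sE)\subset\sF(S,T)$, which is exactly condition~\reftwo; by Remark~\ref{p.main.3} this already makes $\sF(S,T)$ a dense submodule of $E$.

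It remains to establish the form estimate \Eqref{eq.main.1}. Let $A\in\sL(E)$ be the bounded adjointable extension of $[S,T]_{\tau}\,\Sl$ provided by hypothesis~\refthree\ for this fixed $\gl$, so that $Aw=[S,T]_{\tau}\,\Sl w$ for $w\in\sE$. The key claim is the operator identity
\[
   [S,T]_{\tau}x=A\,(S+\gl)x\qquad\text{for all }x\in\sF(S,T).
\]
Granting this, for $x\in\sF(S,T)$ the adjointability of $A$ together with $(S+\gl)x=Sx+\gl x$ yields
\[
   \inn{ [S,T]_{\tau}x,[S,T]_{\tau}x }\le\|A\|^{2}\inn{ (S+\gl)x,(S+\gl)x }\le 2\|A\|^{2}\bigl(\inn{ Sx,Sx }+|\gl|^{2}\inn{ x,x }\bigr),
\]
which is \Eqref{eq.main.1} with $C_{0}=2|\gl|^{2}\|A\|^{2}$, $C_{1}=2\|A\|^{2}$ and $C_{2}=0$ (one may add any $\varepsilon>0$ times $\inn{Tx,Tx}$ if strict positivity of $C_{2}$ is insisted upon). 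For $\tau=+$ this gives weak anticommutativity, and for $\tau=-$, with the commutator in place of the anticommutator throughout, weak commutativity.

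The one step I expect to require care is the claimed identity, since a priori $[S,T]_{\tau}\,\Sl$ is defined only on $\Sl(\sE)$, where it equals $A$, and it must be propagated to all of $\sF(S,T)$. I would argue by closability. A direct computation using the self-adjointness of $S$ and $T$ shows that on its maximal domain $\sF(S,T)$ the operator $[S,T]_{+}=ST+TS$ is symmetric and $[S,T]_{-}=ST-TS$ is skew-symmetric; since $\sF(S,T)$ is dense, $[S,T]_{\tau}$ is closable in either case, and its closure agrees with $[S,T]_{\tau}$ on $\sF(S,T)$. Now fix $x\in\sF(S,T)$, set $w:=(S+\gl)x\in E$, and pick $w_{n}\in\sE$ with $w_{n}\to w$ (possible since the core $\sE$ is dense in $E$). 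Then $x_{n}:=\Sl w_{n}\in\sF(S,T)$ by the first part, while $x_{n}=\Sl w_{n}\to\Sl w=x$ and $[S,T]_{\tau}x_{n}=Aw_{n}\to Aw$. Since $[S,T]_{\tau}$ is closable and $x\in\sF(S,T)$, letting $n\to\infty$ forces $[S,T]_{\tau}x=Aw=A(S+\gl)x$, as required. The only remaining work is then the routine verification of the (skew-)symmetry of $[S,T]_{\pm}$ on $\sF(S,T)$ and of the two elementary Hilbert-module inequalities $\inn{ au,au }\le\|a\|^{2}\inn{ u,u }$ for $a\in\sL(E)$ and $\inn{ u+v,u+v }\le 2\inn{ u,u }+2\inn{ v,v }$ used above.
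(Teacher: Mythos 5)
Your proof is correct, and it reaches the form estimate by the same factorization the paper uses, namely $[S,T]_{\tau}x=A\,(S+\gl)x$ with $A$ the bounded extension of $[S,T]_{\tau}\Sl$, followed by $\inn{Au,Au}\le\|A\|^{2}\inn{u,u}$. The difference lies in how you propagate this identity from $\Sl(\sE)$ to all of $\sF(S,T)$. The paper first upgrades hypotheses \refone--\refthree\ from the core $\sE$ to all of $\domT$: for $x\in\domT$ it takes $x_n\in\sE$ converging in the graph norm of $T$ and shows that $T\Sl x_n$ converges in the graph norm of $S$, using the identity $(S-\tau\gl)T\Sl x_n=[S,T]_{\tau}\Sl x_n-\tau Tx_n$ together with the closedness of $S$ and $T$; this yields the equality $\sF(S,T)=\Sl(\domT)$ as a by-product, after which the factorization on $\sF$ is immediate. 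You instead fix $x\in\sF(S,T)$, approximate $(S+\gl)x$ from $\sE$ in the norm of $E$ only, and conclude via closability of $[S,T]_{\tau}$, which you obtain from its (skew-)symmetry on the dense submodule $\sF(S,T)$ — an argument that does carry over verbatim to Hilbert $C^{*}$-modules, since a densely defined (skew-)symmetric operator is closable there by the usual adjoint argument. Your route is leaner and avoids the graph-norm bookkeeping, at the cost of not producing the identification $\sF(S,T)=\Sl(\domT)$ that the paper records in this proof and reuses later (e.g.\ in \Eqref{eq.main.5} and Remark \ref{p.main.3}); for the statement of the proposition itself, nothing is missing. Two minor points: the appeal to adjointability of $A$ in the step $\inn{Au,Au}\le\|A\|^{2}\inn{u,u}$ is legitimate here since \refthree\ grants it (the paper's Remark \ref{p.main.5} explains it need not even be assumed), and your factor of $2$ in $\inn{(S+\gl)x,(S+\gl)x}\le 2(\inn{Sx,Sx}+|\gl|^{2}\inn{x,x})$ is harmless, though for $\gl\in i\R$ and $S$ symmetric the cross terms cancel and equality holds without it.
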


\begin{remark}\label{p.main.5}
In \refthree\ we put ``adjointable'' in parentheses because
we do not have to assume this. Rather it follows because if $[S,T]_\tau \Sl$
is a bounded module map $E\to E$ for $|\gl|\ge \gl_0$ then its adjoint
is given by $(S+\ovl \gl)\ii [T,S]_\tau$, which turns out to be bounded as
well.
\end{remark}

\begin{proof}\sitem Actually in this case it is easy to show a priori that the
core $\sE$  can be replaced by $\domT$: let $x\in\domT$ and let
$(x_n)_n\subset \sE$ be a sequence with $x_n \to x$ and $Tx_n\to Tx$. Then by
\refone\ we have $(S+\gl)\ii x_n \in\domT$ and by continuity 
$(S+\gl)\ii x_n \to  (S+\gl)\ii x$. 
By \reftwo\ we have $T(S+\gl)\ii x_n\in \domS$ and by \refthree\
\[
    (S-\tau \gl) T (S+\gl)\ii x_n =
            (ST+\tau \cdot TS) (S+\gl)\ii x_n -\tau \cdot Tx_n
\]
converges as well. This shows that $T(S+\gl)\ii x_n$ converges in $\domS$ and
thus $(S+\gl)\ii x\in\domT$ and $T(S+\gl)\ii x\in\domS$. This proves that
\refone\ and \reftwo\ hold for $\domT$ instead of $\sE$.

To see that also \refthree\ holds for $\domT$ instead of $\sE$ we need to show
that the continuous extension of 
$\bl (ST+\tau\cdot TS)(S+\gl)\ii \br{\big|_\sE}$ to $E$ coincides with the now
defined operator $\bl (ST+\tau\cdot TS)(S+\gl)\ii \br{\big|_{\domT}}$.  We
already know that $\bl T(S+\gl)\ii x_n \br_n$ converges in $\domS$ and from
\refthree\ we know that $\bl (ST+\tau\cdot TS)(S+\gl)\ii x_n \br_n$ converges;
thus also $\bl TS(S+\gl)\ii x_n\br_n$ converges. Summing up we have that
$T(S+\gl)\ii x\in\domS, S(T+\gl)\ii x\in\domT$ and hence 
\[
(ST+\tau\cdot TS)(S+\gl)\ii x = \lim_n (ST+\tau\cdot TS)(S+\gl)\ii x_n
\]
as claimed.

\sitem The first part of this proof
shows that for $x\in\domT$ we have 
\[
  (S+\gl)\ii x\in 
     \domS\cap\domT, S(S+\gl)\ii x\in\domT, T(S+\gl)\ii x\in\domS,
\]
thus $(S+\gl)\ii\bl \domT\br \subset \sF(S,T)$, for $\gl\in i\R,
|\gl|\ge \gl_0$. In fact equality holds: namely, given $y\in
\sF(S,T)$ then $x :=(S+\gl) y\in\domT$ and hence $y = (S+\gl)\ii x\in\sF$.

On $\sF = \ran\Bl (S+\gl)\ii \cdot (T+\mu)\ii \Br$ we now have
for a fixed $|\gl|\ge \gl_0$, using \refthree, 
\begin{equation}\label{eq.main.5}
\begin{split}
     \inn{ [S,T]_{\tau}y,  [S,T]_{\tau}y } 
        & =  \inn{ [S,T]_{\tau}(S+\gl)\ii(S+\gl)y,  [S,T]_{\tau}(S+\gl)\ii (S+\gl)y }  \\
        & \le   C \cdot \inn{ (S+\gl) y,(S+\gl) y }  
          \le  C_0\cdot \inn{ y,y } + C_1 \cdot \inn{ Sy,Sy  }, 
\end{split}
\end{equation}
and the result follows.
\end{proof}

\begin{remark}\label{p.main.6}
For $y\in\sF(S,T)$, the form estimate
\begin{equation}\label{eq.main.6}
    \inn{[S,T]_{\tau}y,  [S,T]_{\tau}y } \le  C_1\cdot \inn{ y,y } + C_2 \cdot \inn{ Sy,Sy  }, 
\end{equation}
implies the norm estimate
\begin{equation}\label{eq.main.7}
    \|[S,T]_{\tau}y\|\le C_0\cdot \|y\| + C_1 \cdot \|Sy\|,
\end{equation}
but is in fact equivalent to it, provided that $(S+\lambda)^{-1}\Dom{T}\subset \sF(S,T)$. This is seen by observing that
\Eqref{eq.main.7} implies that
%\mpar{ML: Do we know from 
%\Eqref{eq.main.7} that $(S+\gl)\sF$ is dense?}
\[
    [S,T]_{\tau}(S+\gl)\ii :(S+\gl)\sF(S,T)\to E,
\]
extends to a bounded adjointable operator. Then by  writing
\[
    [S,T]_{\tau}y=[S,T]_{\tau}(S+\gl)\ii (S+\gl)y,
\]
and applying the standard form estimate 
$\inn{ Ry,Ry } \leq \|R\|^2\inn{ y,y }$ for adjointable operators we obtain
\eqref{eq.main.6}. Of course a similar equivalence holds when we exchange $S$
and $T$. It should be noted that when both $C_{1}$ and $C_2$ are nonzero, we
cannot a priori replace \eqref{eq.main.1} by the corresponding norm estimate.
For that we would need the regularity of the operator $D=S+T$, as the above
argument works by using the operator $(1+D^{*}D)^{-\frac{1}{2}}$.
\end{remark}

\subsection{Clifford algebras and (anti)commutators}
\label{ss.Clifford}

We briefly explain how one can switch between commuting and anticommuting
operators using Clifford algebra identities.

Let $\Cltwo$ be the complex Clifford algebra on two unitary
self-adjoint generators $\sigma_{i}$, $i=1,2,$ satisfying the
relations $\sigma_{i}\sigma_{j} + \sigma_{j}\sigma_{i} = 2\delta_{ij}$.
In fact $\Cltwo\simeq M(2,\C)$ with generators given by
\footnote{$\sigma_3$ is the volume element of $\Cltwo$, alternatively
$\sigma_1, \sigma_2,\sigma_3$ generate one of the two irreducible
representations of $\C\ell(3) \simeq M(2,\C)\oplus M(2,\C)$.}
\[
\sigma_{1}:= \begin{pmatrix} 0 & 1 \\ 1 & 0 \end{pmatrix},\quad
\sigma_{2}:= \begin{pmatrix} 0 & i \\ -i & 0 \end{pmatrix},\quad
\sigma_{3} := i \,\sigma_{1} \sigma_{2} = \begin{pmatrix} 1 & 0 \\ 0 & -1\end{pmatrix}.
\]
Given operators $S,T$ on the Hilbert-$B$-module $E$ let 
$\hat E = E\otimes \C^2=E\oplus E$ and  consider the operators $\hat S$ and
$\hat T$ on $E\oplus E$ given by 
\begin{align}
\label{Shat}
\Dom{\hat S} &:= \domS\oplus \domS,
                  &  \hat S &= S\otimes I = \begin{pmatrix} S & 0 \\ 
                                                            0 & S
                                            \end{pmatrix},  \\
\label{That}
\Dom{\hat T} &:= \domT\oplus \domT, 
                  &  \hat T & = T\otimes I = \begin{pmatrix} T & 0 \\
                                                             0 & T
                                             \end{pmatrix}.
\end{align}

The $C^{*}$-algebra $\Cltwo\subset\sL(\hat E)$ is represented unitarily on
$\hat E$. The submodules $\Dom{\hat S}$ and $\Dom{\hat T}$ are $\Cltwo$
invariant, and the representation commutes with $\hat S, \hat T$, so that the
operators 
\[
         s_{i}:=\hat S\sigma_{i},\quad \Dom{s_{i}}:=\Dom{\hat S},
   \quad t_{j}:=\hat T\sigma_{j},\quad \Dom{t_j}:=\Dom{\hat T},
\] 
are all self-adjoint and regular in $\hat E$.  It then holds that
\begin{align*}
    \sF (s_{i},t_{j})
            :&=\bigsetdef{  x\in \Dom{s_{i}} \cap \Dom{t_{j}}}{%
                 s_i x\in \Dom{t_{j}}, t_jx\in \Dom{s_{i}}}\\
             &=\sF(S,T)\oplus \sF(S,T),
\end{align*}
and we have the following relations:
\begin{align}\label{eq.main.8}
   \hat S \sigma_{i} \cdot \hat T \sigma_{j} \pm \hat T \sigma_{j} \cdot \hat S \sigma_{i} 
            &= (\hat S\hat T\mp\hat T\hat S)\sigma_{i}\sigma_{j},\quad i\neq j,  \\
\nonumber    (\hat S\sigma_{i})^2 &= \hat S^2,\quad (\hat T\sigma_{i})^2 = \hat T^2,\\
\label{eq.main.9}  (\hat S\sigma_{i}+\gl)\ii  = (\hat S\sigma_{i} -\gl)(\hat S^2-\gl^2)\ii 
                        &= (\hat S-\gl)\ii \sigma_{i}-
                             (\gl \sigma_{i} +\gl) (\hat S^2-\gl^2)\ii .
\end{align}

It follows from \Eqref{eq.main.9} that for all $i,j$ we have
\[
   (s_{i}+\gl)\ii \Dom{t_{j}}\subset\sF ( s_i,t_{j}),
\]
and then from \Eqref{eq.main.8} that for $i\neq j$, the pair
$(s_{i},t_{j})$ is weakly commuting whenever $(S,T)$ is weakly anticommuting
and vice versa. Out of a pair of weakly anticommuting operators we so obtain
three pairs $(s_{i},t_{j}), (i\neq j)$ of weakly commuting operators and
similarly so for a pair of weakly commuting operators.

\subsection{The Main Theorem}

\begin{theorem}\label{p.main.7} Let $S,T$ be weakly anticommuting operators in
the Hilbert-$B$-module $E$. Then the operator $S+T$ is self-adjoint and
regular on $\domS\cap\domT$. In more detail we have the following:

\begin{thmenum}
\item There is a constant $C$ such that for $x\in\domS\cap\domT$ we have
\begin{equation}
    \begin{split}
    C\ii  \cdot \bl \inn{ x, x } + \inn{ (S+ T)x,(S+T)x } \br
          & \le  \inn{ x,x } + \inn{ Sx,Sx }+\inn{ Tx,Tx } \\ 
          &\le C \cdot \bl \inn{ x, x } + \inn{ (S+ T)x,(S+T)x } \br.
    \end{split}
    \label{eq.main.10}
\end{equation}

\item For $\gl,\mu\in i\R, |\gl|,|\mu|\ge \gl_0$ large enough we have
\begin{equation}\label{eq.main.11}
    (T+\mu)\ii\bl \domS \br = \sF(S,T) = (S+\gl)\ii\bl \domT \br
\end{equation}
and hence
\begin{equation}\label{eq.main.12}
    \ran\bl (T+\mu)\ii\cdot (S+\gl)\ii \br = \sF(S,T) = 
          \ran\bl (S+\gl)\ii\cdot (T+\mu)\ii \br.
\end{equation}
\item For $\gl_0>0$ large enough and $\gl,\mu\in i\R, |\gl|>|\mu|\ge \gl_0,
\mu/\gl>0$ the operator
\[
   S+T+\frac{TS}{\gl} +\mu: \sF\to E
\]
is bijective and its inverse $(S+T+\frac{TS}{\gl} +\mu)\ii $ is a bounded
adjointable operator. Moreover, for fixed $\mu$ and for all $x\in E$ 
\begin{equation}\label{eq.main.13}
     \lim_{|\gl|\to \infty}(S+T+\mu)(S+T+\frac{TS}{\gl} +\mu)\ii x=x,
\end{equation}
in norm and
\begin{equation*}\lim_{|\gl|\to \infty}(S+T+\frac{TS}{\gl} +\mu)\ii =(S+T+\mu)\ii ,
\end{equation*}
in operator norm.\footnote{The limit $\lim_{|\gl|\to \infty}$ is taken for
the net of those $\gl$ with $|\gl|>|\mu|, \mu/\gl>0$. In the sequel this is
understood without repeatedly mentioning it.}

\item $\sF(S,T)$ is a core for $S,T,$ and $S+T$.
\end{thmenum}
\end{theorem}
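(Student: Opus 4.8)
The strategy is the one hinted at throughout Section~\ref{s.main}: reduce everything to the invertibility of the \emph{perturbed} sum $S+T+\tfrac{TS}{\gl}+\mu$, because that operator can be inverted by an explicit formula involving only the resolvents of $S$ and $T$ separately, exploiting the weak anticommutation estimate \Eqref{eq.main.1}. Concretely, the factorization one expects to use is
\[
   S+T+\frac{TS}{\gl}+\mu
     = (S+\gl)\Bl \gl\ii(T+\mu) + \gl\ii(S+\gl)\ii TS + \text{l.o.t.}\Br,
\]
or, more usefully, to write a candidate parametrix as a product $(S+\gl)\ii(\text{something})(T+\mu)\ii$ and check that the error terms are small in operator norm once $|\gl|$ is large, using \Eqref{eq.main.4}--\Eqref{eq.main.1}. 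First I would establish part~(3): show that on $\sF = \ran\bl(S+\gl)\ii(T+\mu)\ii\br$ (which by Remark~\ref{p.main.3}(iii) and hypothesis~\reftwo\ of Def.~\ref{p.main.1} is well-defined) the operator $S+T+\tfrac{TS}{\gl}+\mu$ has a two-sided bounded adjointable inverse, and that this inverse converges in operator norm to a bounded operator $R_\mu$ as $|\gl|\to\infty$. The key computation is a Neumann-series argument: the ``small'' term is $\tfrac1\gl$ times something controlled by \Eqref{eq.main.4}, so choosing $|\gl|$ large beats it down. This is the technical heart, and it is precisely what Sections~\ref{s.DC} and~\ref{s.RA} are advertised to carry out.

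\textbf{From part (3) to self-adjointness and regularity.} Granting part~(3), one argues as follows. The limit operator $R_\mu$ satisfies $(S+T+\mu)R_\mu x = x$ for all $x\in E$ by \Eqref{eq.main.13}, and $\ran R_\mu \subset \domS\cap\domT$ because each approximant has range in $\sF\subset\domS\cap\domT$ and $S,T$ are closed with the relevant quantities converging (this is where the two-sided form estimate \Eqref{eq.main.10} enters: it shows that convergence of $(S+T)x_n$ together with $x_n\to x$ forces convergence of $Sx_n$ and $Tx_n$, so that $S+T$ with domain $\domS\cap\domT$ is \emph{closed} and the graph norm of $S+T$ is equivalent to the sum of the graph norms of $S$ and $T$). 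Thus $R_\mu$ is a right inverse of $S+T+\mu$; running the adjoint argument (and using that $R_\mu^* = R_{\ovl\mu}$, which follows from adjointability of the approximants and Remark~\ref{p.main.5}-type reasoning) gives a left inverse, so $S+T$ is self-adjoint on $\domS\cap\domT$ with $(S+T+\mu)\ii = R_\mu$ bounded adjointable; since this holds for a ray of $\mu$, $S+T$ is regular. Part~(1) is then just a restatement of the closed-graph/equivalence-of-norms fact, which should be extracted early since it is needed for the limiting argument anyway. Part~(2), the identity $(T+\mu)\ii\bl\domS\br = \sF = (S+\gl)\ii\bl\domT\br$, follows by applying the same analysis with the roles of $S$ and $T$ swapped (weak anticommutation is \emph{not} symmetric in $S,T$ because of the $C_1,C_2$ asymmetry, but \Eqref{eq.main.4} already symmetrizes the relevant norm bound), combined with the observation in Remark~\ref{p.main.3}(iii) that once \reftwo\ holds with $\sE=\domT$ one gets \Eqref{eq.main.3}.

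\textbf{The core statement, part (4).} This is the easiest piece once the machinery is in place. For a core of $S$: given $x\in\domS$, the vectors $x_\gl := (\gl\ii S + 1)\ii x = \gl(S+\gl)\ii x$ (or a similar mollification) lie in $\sF$ by hypothesis~\reftwo, and $x_\gl \to x$, $Sx_\gl \to Sx$ as $|\gl|\to\infty$ by standard functional calculus for the regular self-adjoint operator $S$; symmetrically $\sF$ is a core for $T$. For $S+T$: given $x\in\domS\cap\domT$, set $y := (S+T+\mu)x \in E$ and $x_\gl := (S+T+\tfrac{TS}{\gl}+\mu)\ii y \in \sF$; by \Eqref{eq.main.13} we get $(S+T+\mu)x_\gl = (S+T+\mu)(S+T+\tfrac{TS}{\gl}+\mu)\ii y \to y$, hence $x_\gl \to (S+T+\mu)\ii y = x$ and $(S+T)x_\gl \to (S+T)x$, so $x_\gl$ is the desired approximating sequence in $\sF$ — and by the norm-equivalence \Eqref{eq.main.10} this simultaneously gives $Sx_\gl \to Sx$ and $Tx_\gl \to Tx$, i.e.\ $\sF$ is a \emph{joint} core as promised in Remark~\ref{p.main.3}(iv). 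The main obstacle in the whole argument is unquestionably part~(3): producing the parametrix and controlling the error terms, since one must juggle unbounded operators that do not commute and extract operator-norm (not merely strong) convergence — everything else is bookkeeping built on top of it.
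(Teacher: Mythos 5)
Your high-level skeleton matches the paper's: perturb to $A_\gl = S+T+\tfrac{TS}{\gl}$, invert it via the factorization $A_\gl+\gl=\tfrac1\gl(T+\gl)(S+\gl)$, pass to the limit, and deduce self-adjointness and regularity from dense range of $S+T+\mu$ on the closed symmetric operator. The derivations of (1), (4) and the closing argument from (3) are essentially correct bookkeeping. But there is a genuine gap exactly where you locate ``the technical heart'': your sketch of part (3) would not go through. A Neumann series does give bijectivity of $A_\gl+\mu$ and even the uniform bound $\|(A_\gl+\mu)\ii\|\le |\mu|\ii$, but the convergence \Eqref{eq.main.13} is \emph{not} a matter of ``$\tfrac1\gl$ times something controlled by \Eqref{eq.main.4} being beaten down'': one has $(S+T+\mu)(A_\gl+\mu)\ii = 1-\tfrac{TS}{\gl}(A_\gl+\mu)\ii$, and the best uniform bound on the error term is $\|\tfrac{TS}{\gl}(A_\gl+\mu)\ii\|\le 1$ — it does not tend to $0$ in norm. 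Two further ideas are needed, neither of which appears in your proposal: first, a quadratic-form lower bound for $\dlinn{(A_\gl+\mu)x}$ (expanding the square and absorbing the cross terms via \Eqref{eq.main.1} and \Eqref{eq.DC.2}) yielding the $\gl$-uniform bounds $\|S(A_\gl+\mu)\ii\|,\|T(A_\gl+\mu)\ii\|\le\sqrt2$; second, the identity
\begin{equation*}
 S(A_\gl+\mu)\ii + (S-T-\tfrac{ST}{\gl}-\mu)\ii S
   = (S-T-\tfrac{ST}{\gl}-\mu)\ii\,[S,T]\,(A_\gl+\mu)\ii
\end{equation*}
on $\domS$ (Lemma \ref{anticomm}), which lets one move $S$ through the resolvent at the cost of a uniformly bounded commutator term, so that $\tfrac{TS}{\gl}(A_\gl+\mu)\ii x = O(|\gl|\ii)$ for each fixed $x\in\domS$ — strong convergence on a dense set, upgraded by the uniform bounds.

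A second, smaller issue is the order of the argument and your treatment of part (2). You propose to prove (3) first and obtain (2) by ``swapping the roles of $S$ and $T$'', but Definition \ref{p.main.1}\reftwo\ is asymmetric: only $(S+\gl)\ii(\sE)\subset\sF$ is assumed, and the symmetric statement $(T+\mu)\ii(\domS)\subset\sF$ is a theorem, not a hypothesis. Moreover (3) already \emph{uses} (2), since the bijectivity of $A_\gl+\gl=\tfrac1\gl(T+\gl)(S+\gl):\sF\to E$ rests on $\sF=\ran\bl(S+\gl)\ii(T+\gl)\ii\br$. The paper closes this circle with a separate comparison lemma (Lemma \ref{p.DC.3}): the operators $A=(T+\mu)(S+\gl)$ and $B=(S+\gl)(T+\mu)$ differ by $[S,T]_-$ (after the Clifford-matrix passage from anticommuting to commuting pairs), which by \Eqref{eq.main.4} is $\eps(\|Ax\|+\|Bx\|)$-small for $|\gl|,|\mu|$ large, forcing $\Dom A=\Dom B$. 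Some such domain-comparison argument is indispensable and is missing from your write-up.
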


\begin{remark}\label{p.main.8}
Item \maincore\ can be made more precise.
For $x\in\domS\cap\domT$ it follows from \mainstrong\ that
\[
    x_\gl := (S+T+ \frac{TS}{\gl}+\mu)\ii (S+T+\mu)x
\]
converges, as $|\gl|\to\infty$, to $x$ in the graph norm of $S+T$.
By \mainclosedness\ this means that $x_\gl\to x, Sx_\gl\to Sx, Tx_\gl\to Tx$.

Alternatively, the method of proof of \mainstrong\ can be used
to show the following slightly stronger convergence result:
for $x\in E$ put
\begin{equation}\label{eq.main.14}
    x_\gl := \gl^2\cdot (T+\gl)\ii\cdot(S+\gl)\ii x\in\sF(S,T), 
\end{equation}
for $\gl\in i\R, |\gl|>\gl_0$. Then $\lim\limits_{|\gl|\to\infty} x_\gl = x$.
Furthermore, if $x\in \domS$ (resp. $\domT$) then also
$\lim\limits_{|\gl|\to\infty} S x_\gl = S x$ (resp. 
$\lim\limits_{|\gl|\to\infty} T x_\gl = T x$).
\end{remark}

The proof of Theorem \ref{p.main.7} will be broken down as follows:

\resetsitem

\sitem First we prove the form estimate \Eqref{eq.main.10}, as a rather direct
consequence of the form estimate \Eqref{eq.main.1}. From \Eqref{eq.main.10} we
derive that the operator $S+T$ is closed on $\domS\cap\domT$.

\sitem Next we will show \mainsymmetry\ which shows, among other things, that
a posteriori \reftwo\ of Def.~\ref{p.main.1} holds for $\domT$ instead of
the core $\sE$ and that the roles of $S,T$, which a priori appear in
Def.~\ref{p.main.1} \reftwo\ in an unsymmetric way, can be reversed. See
Section \ref{s.DC}.

\sitem For $\gl\in i\R$ with $|\gl|\geq \gl_0$, the
operators \[A_\gl :=S+T+\frac{TS}{\gl}:\sF\to E,\] are well defined.
We prove a fundamental lower form bound for the operators $A_\gl +\mu$.
This allows to deduce $\gl$-uniform norm bounds on the bounded adjointable
inverse of these operators, and that for fixed $\mu$ the net
$\bl (A_\gl +\mu)\ii  \br_\gl$ is operator norm Cauchy in $\gl$.

\sitem Subsequently we show that the convergence in \Eqref{eq.main.13} holds true
for all $x\in E$.  From that we deduce directly that the operators $S+T+\mu$
have dense range, and therefore are essentially self-adjoint and regular on
$\sF(S,T)$. It is then readily established that the limit of the Cauchy net
$\bl (A_\gl +\mu)\ii \br_\gl$ is in fact $(S+T+\mu)\ii $, the resolvent of the sum
operator.

\sitem In the weakly commuting case, we obtain that the operators 
\[
\begin{pmatrix} 0 & S+iT\\ 
                S-iT & 0
\end{pmatrix},\quad \begin{pmatrix} S & T \\ T & -S\end{pmatrix},\quad \textnormal{and}\quad \begin{pmatrix} S & iT \\ -iT & -S\end{pmatrix},
\]
are self-adjoint and regular on $\bl\domS\cap\domT\br^{\oplus 2}$.
In particular the operators $S\pm i T$ on
$\domS\cap\domT$ are closed, regular and adjoints of each other.
\resetsitem

\section{Domain considerations: a closer look at $\sF(S,T)$}
\label{s.DC}

Estimates for inner products in Hilbert-$C^*$-modules are a little more
delicate since one is dealing with inequalities in $C^*$--algebras.  Recall
that for $x,y\in E$ one has
\begin{equation}\label{eq.DC.1}
   \inn{x,y} + \inn{y,x} \le \inn{x,x} + \inn{y,y},
\end{equation}
which follows immediately from expanding $0\le \inn{x-y,x-y}$.
By replacing $x$ by $\pm r^{\frac{1}{2}}\cdot x$ and $y$ by $r^{-\frac{1}{2}} \cdot y$ we obtain
for \emph{any} $r>0$ and $x,y\in E$
\begin{equation}\label{eq.DC.2}
   \pm\bl \inn{x,y} + \inn{y,x} \br 
        \le r\cdot \inn{x,x} + \frac 1r\cdot \inn{y,y}.
\end{equation}
Note that this is an inequality from above and from below. This estimate
will be used repeatedly in the sequel.

\subsection{Graph norm estimate} 
\label{ss.PI.2}
We now show that the inequalities \Eqref{eq.main.10} hold true for
$x\in\sF(S,T)\subset\domS\cap\domT$. 

\begin{prop}\label{p.DC.1} Let $S,T$ be weakly anticommuting operators in the
Hilbert-$C^{*}$-module $E$.  There is a constant $C>0$ such that for all
$x\in\sF(S,T)$ the form estimate 
%\mpar{displayed eqn is verbatim copy of \Eqref{eq.main.10}. We could as well
%just refer to it ... }
\[ % this is a verbatim copy of eq.main.7 
    \begin{split}
    C\ii  \cdot \bl \inn{ x, x } + \inn{ (S+ T)x,(S+T)x } \br
          & \le  \inn{ x,x } + \inn{ Sx,Sx }+\inn{ Tx,Tx } \\ 
          &\le C \cdot \bl \inn{ x, x } + \inn{ (S+ T)x,(S+T)x } \br
    \end{split}
    %\label{eq.main.10}
\]
holds true.
\end{prop}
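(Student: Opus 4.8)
The plan is to prove the two-sided inequality by expanding $\inn{(S+T)x,(S+T)x}$ and controlling the cross terms $\inn{Sx,Tx}+\inn{Tx,Sx}$ using the form estimate \Eqref{eq.main.1} together with the elementary polarization inequality \Eqref{eq.DC.2}. The upper bound is the easy direction: for $x\in\sF(S,T)$ we have $Sx\in\domT$ and $Tx\in\domS$, so $(S+T)x$ is defined and $\inn{(S+T)x,(S+T)x}=\inn{Sx,Sx}+\inn{Tx,Tx}+\bl\inn{Sx,Tx}+\inn{Tx,Sx}\br$, and the cross term is bounded above by $\inn{Sx,Sx}+\inn{Tx,Tx}$ via \Eqref{eq.DC.1}; adding $\inn{x,x}$ gives the right-hand inequality with a universal constant.

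For the lower bound the key observation is that on $\sF(S,T)$ one can write the cross term using the anticommutator: since $x$, $Sx$ and $Tx$ all lie in appropriate domains, $\inn{Sx,Tx}+\inn{Tx,Sx}=\inn{x,(ST+TS)x}=\inn{x,[S,T]x}$. Now apply \Eqref{eq.DC.2} with a parameter $r>0$ to be chosen: $-\bl\inn{x,[S,T]x}+\inn{[S,T]x,x}\br\le r\inn{x,x}+\tfrac1r\inn{[S,T]x,[S,T]x}$, and then insert the weak-anticommutativity estimate \Eqref{eq.main.1} to get $-\bl\inn{Sx,Tx}+\inn{Tx,Sx}\br\le (r+\tfrac{C_0}{2r})\inn{x,x}+\tfrac{C_1}{2r}\inn{Sx,Sx}+\tfrac{C_2}{2r}\inn{Tx,Tx}$. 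Hence
\[
\inn{(S+T)x,(S+T)x}\ge \Bl 1-\frac{C_1}{2r}\Br\inn{Sx,Sx}+\Bl 1-\frac{C_2}{2r}\Br\inn{Tx,Tx}-\Bl r+\frac{C_0}{2r}\Br\inn{x,x}.
\]
Choosing $r$ large enough that $1-\tfrac{C_1}{2r}\ge\tfrac12$ and $1-\tfrac{C_2}{2r}\ge\tfrac12$ yields $\inn{Sx,Sx}+\inn{Tx,Tx}\le 2\inn{(S+T)x,(S+T)x}+\bl 2r+\tfrac{C_0}{r}\br\inn{x,x}$, and adding a suitable multiple of $\inn{x,x}$ gives the left-hand inequality with an explicit constant $C$ depending only on $C_0,C_1,C_2$.

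The only genuinely delicate point — the main obstacle — is justifying the identity $\inn{Sx,Tx}+\inn{Tx,Sx}=\inn{x,[S,T]x}$ for $x\in\sF(S,T)$; this is where the precise definition of $\sF(S,T)$ is used. Because $Tx\in\domS$ and $S$ is self-adjoint, $\inn{Sx,Tx}=\inn{x,S(Tx)}=\inn{x,STx}$, and symmetrically $\inn{Tx,Sx}=\inn{x,TSx}$ since $Sx\in\domT$ and $T$ is self-adjoint; adding these gives the claim, and in particular shows $\inn{x,[S,T]x}$ is self-adjoint in $B$, which is consistent with the right-hand side of \Eqref{eq.main.1} being positive. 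Everything else is a routine application of \Eqref{eq.DC.2} and \Eqref{eq.DC.1}; note that all inequalities here are understood in the $C^*$-algebra $B$, but since \Eqref{eq.DC.2} is an order inequality valid in $B$ for every scalar $r>0$, the argument goes through verbatim without needing any spectral-theoretic maneuvering.
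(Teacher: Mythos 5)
Your proof is correct and follows essentially the same route as the paper's: expand $\inn{(S+T)x,(S+T)x}$, use \Eqref{eq.DC.1} for the upper bound, and for the lower bound rewrite the cross term as $\inn{[S,T]x,x}$ on $\sF(S,T)$ (exactly where the domain conditions enter), then absorb it via \Eqref{eq.DC.2} and the form estimate \Eqref{eq.main.1} by choosing the weight large. The only difference is cosmetic bookkeeping of the constants.
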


\begin{proof}
For any $x\in\domS\cap \domT$ we write
\begin{align*}
    \inn{ (S+T)x,(S+T)x }&=\inn{ Sx,Sx } +\inn{ Tx,Tx } +\inn{ Sx,Tx }+\inn{ Tx,Sx }\\
         &\leq 2\cdot (\inn{ Sx,Sx } +\inn{ Tx,Tx }). 
\end{align*}
To prove the lower estimate, if we furthermore assume that 
\[
    x\in\sF(S,T)\subset \domS\cap\domT
\]
we have for any $K>0$ (\cf \Eqref{eq.DC.2})
\[
    \pm \bl \inn{ Sx,Tx }+\inn{ Tx,Sx } \br
   =\pm \inn{ [S,T]x,x }\leq  K\ii \cdot \inn{ [S,T]x,[S,T]x } + K\cdot \inn{ x,x }.
\]
Thus for $C$ as in \Eqref{eq.main.1} and any $0<\eps<1$ 
we can choose $K>C$ such that $K\ii C<\eps$. We then find
\begin{align*}
    \inn{ [S,T]x,x } & \geq - K\ii \cdot \inn{ [S,T]x, [S,T]x  } - K \cdot \inn{ x,x }\\
       &\geq -\eps \cdot \bl \inn{ Sx, Sx }+\inn{ Tx, Tx } \br 
          - (K+\eps )\cdot \inn{ x,x },
\end{align*}
and thus %for some $0<\eps<1$ and $K>C$
\begin{multline*}
    \dinn{ Sx }+\dinn{ Tx } 
       \leq (1+\eps)\cdot \bl\inn{ Sx,Sx }+\inn{ Tx,Tx }\br +
               \inn{ [S,T]x,x } + (K+\eps)\cdot \inn{ x,x }  \\
        =  \inn{ (S+T)x, (S+T)x } + \eps\cdot \bl \dinn{Sx} + \dinn{Tx} \br
               + (K+\eps)\cdot \dinn{ x },
\end{multline*}
hence the estimate
\[
    (1-\eps)\cdot \bl \dinn{ Sx }+\dinn{ Tx }\br
       \leq \dinn{ (S+T)x } + (K+\eps)\cdot \dinn{ x }.
\]
With the constant
\[
  C :=\max\left\{\frac{1}{1-\eps},\frac{K+\eps}{1-\eps}\right\},
\]
we find
\[
\dinn{ Sx }+\dinn{ Tx }\leq C\cdot \bl \dinn{ (S+T)x } + \dinn{ x }\br,
\]
as desired.
\end{proof}

\subsection{Symmetry of the axioms for weak (anti)commutativity; proof of part
\textup{(2)} of the main Theorem \plref{p.main.7}}
\label{ss.DC.1}

We first note that for arbitrary resolvent parameters $\gl,\mu$ we have the
equality of commutators
\begin{equation}\label{eq.DC.3}
	[ S+\gl, T+\mu ]_- = [S,T]_-;
\end{equation}
there is no simple analogue to this in the weakly anticommuting case.

In this section we establish part \textup{(2)} of the main Theorem
\plref{p.main.7}. For the proof we need two preparatory Lemmas.

\begin{lemma}\label{p.DC.2}
Let $S,T$ be weakly commuting operators. Then for $\gl_0$ large
enough we have for all $\gl,\mu\in i\R, |\gl|,|\mu|\ge \gl_0$
and $x\in\sF(S,T)$
\begin{align}
\| [S+\gl,T+\mu]_- x\| = \|[S,T]_-x \| & \le C\cdot\bl
\frac1{|\gl|}+\frac1{|\mu|}\br\cdot \| (S+\gl)(T+\mu) x \|,\label{eq.DC.4}\\
\| [S+\gl,T+\mu]_- x\| = \|[S,T]_-x \| & \le C\cdot\bl
\frac1{|\gl|}+\frac1{|\mu|}\br\cdot \| (T+\mu)(S+\gl) x \|\label{eq.DC.5}.
\end{align}
By increasing $\gl_0$ we can arrange the constant 
$C\cdot \bl\frac1{|\gl|}+\frac1{|\mu|}\br \le 2\cdot  C\cdot \frac1{\gl_0}$ 
to be as small as we please.
\end{lemma}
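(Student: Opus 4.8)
The plan is to establish the two norm estimates by a resolvent sandwich. The starting point is the fundamental norm estimate \eqref{eq.main.4} which, in the weakly \emph{commuting} case, reads
\[
    \| [S,T]_- x \| \le \frac{C}{|\mu|}\cdot \|(T+\mu)(S+\gl)x\|
                     + \frac{C}{|\gl|}\cdot \|(S+\gl)(T+\mu)x\|,
\]
valid for all $x\in\sF(S,T)$ and $\gl,\mu\in i\R$ with $|\gl|,|\mu|\ge\gl_0$. (The leading equality $\|[S+\gl,T+\mu]_-x\| = \|[S,T]_-x\|$ is immediate from \eqref{eq.DC.3}, so it suffices to bound $\|[S,T]_-x\|$.) To get \eqref{eq.DC.4} I need to replace the mixed term $\|(T+\mu)(S+\gl)x\|$ by a multiple of $\|(S+\gl)(T+\mu)x\|$, and symmetrically for \eqref{eq.DC.5}. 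For $x\in\sF(S,T)$ both $(S+\gl)x$ and $(T+\mu)x$ lie in the appropriate domains, so I may write, using $[S,T]_- = ST-TS$,
\[
   (T+\mu)(S+\gl)x = (S+\gl)(T+\mu)x - [S,T]_- x,
\]
which rearranges to $(T+\mu)(S+\gl)x = (S+\gl)(T+\mu)x - [S,T]_- x$. Plugging this into \eqref{eq.main.4} gives
\[
   \|[S,T]_-x\| \le \frac{C}{|\mu|}\|(S+\gl)(T+\mu)x\| + \frac{C}{|\mu|}\|[S,T]_-x\| + \frac{C}{|\gl|}\|(S+\gl)(T+\mu)x\|.
\]

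Now the point is that the term $\frac{C}{|\mu|}\|[S,T]_-x\|$ can be absorbed into the left-hand side once $\gl_0$ is large enough that $C/|\mu| \le C/\gl_0 < \tfrac12$; this is exactly where the hypothesis ``for $\gl_0$ large enough'' is used. After absorption one obtains
\[
   \|[S,T]_-x\| \le 2C\Bigl(\frac1{|\mu|}+\frac1{|\gl|}\Bigr)\|(S+\gl)(T+\mu)x\|,
\]
which is \eqref{eq.DC.5} (after renaming the constant $C$, as the paper's convention allows). The estimate \eqref{eq.DC.4} follows by the symmetric manipulation: substitute instead $(S+\gl)(T+\mu)x = (T+\mu)(S+\gl)x + [S,T]_- x$ into \eqref{eq.main.4}, absorb the resulting $\frac{C}{|\gl|}\|[S,T]_-x\|$ term on the left (again valid for $\gl_0$ large), and relabel. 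The final sentence of the lemma is then automatic: the prefactor $C\bigl(\tfrac1{|\gl|}+\tfrac1{|\mu|}\bigr)$ is bounded by $2C/\gl_0$, which tends to $0$ as $\gl_0\to\infty$, so by choosing $\gl_0$ large we make it as small as desired.

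The only subtlety — and thus the ``main obstacle,'' though it is a mild one — is bookkeeping of domains: one must check that the algebraic identity $(T+\mu)(S+\gl)x - (S+\gl)(T+\mu)x = -[S,T]_- x$ is legitimate for $x\in\sF(S,T)$. But this is precisely the content of the definition of $\sF(S,T)$ in \eqref{eq.main.2}: $x\in\domS\cap\domT$ with $Sx\in\domT$ and $Tx\in\domS$, so all four compositions $ST x$, $TS x$, and the lower-order terms involving $\gl,\mu$ are defined, and both sides expand to $STx - TSx$ after cancellation of the $\gl\mu x$, $\mu S x$, $\gl T x$ terms. Hence no closure or limiting argument is needed here, and the proof is a direct two-line computation plus the absorption trick. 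I would also remark that one could alternatively deduce \eqref{eq.DC.4}–\eqref{eq.DC.5} from each other directly via this same commutator identity without re-invoking \eqref{eq.main.4}, but running both through \eqref{eq.main.4} keeps the symmetry of the argument transparent.
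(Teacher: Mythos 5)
Your proposal is correct and follows essentially the same route as the paper: apply \eqref{eq.main.4}, trade one ordered product for the other via the identity $(T+\mu)(S+\gl)x-(S+\gl)(T+\mu)x=-[S,T]_-x$ on $\sF(S,T)$, and absorb the resulting commutator term for $\gl_0$ large. The only slip is cosmetic: the estimate you obtain with $\|(S+\gl)(T+\mu)x\|$ on the right is \eqref{eq.DC.4}, not \eqref{eq.DC.5}, so your two labels are interchanged.
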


\begin{proof}
By \Eqref{eq.DC.3} and \Eqref{eq.main.4} we have
\begin{align*}
\| [S+\gl,T+\mu]_- x\| &= \|[S,T]_-x \|  \le \frac{C_1}{|\mu|} \cdot \| (T+\mu)(S+\gl) x \|
                         + \frac{C_2}{|\gl|}\cdot \| (S+\gl)(T+\mu) x \| \\
    &\le \frac{C_1}{|\mu|} \cdot \| [T,S]_- x \| + \bl\frac{C_1}{|\mu|}+\frac{C_2}{|\gl|}\br
                               \| (S+\gl)(T+\mu) x \|.
\end{align*}
If $\gl_0$ is large enough such that $\frac{C_1}{\gl_0}<1$ we obtain estimate
\Eqref{eq.DC.4}. The proof of \Eqref{eq.DC.5} is completely analogous.
\end{proof}
%\mpar{Here we only need to increase $\mu$.}

\begin{lemma}\label{p.DC.3} Let $A,B$ be closed densely defined and boundedly
invertible operators in the Banach space $X$ and suppose that a subspace
\[
    \sE\subset\Dom{A}\cap\Dom{B}\subset X
\]
is a core for $A$. If there exists $0<\eps<1/3$ such that for all
$x\in\sE$

\begin{equation}\label{eq.DC.6}
	\|Ax - B x\| \le \eps \cdot \bl \|Ax\| + \|Bx\| \br,
\end{equation}
then $\Dom{A}=\Dom{B}$ and the estimate \Eqref{eq.DC.6}
as well as
\begin{equation}\label{eq.DC.7}
    \begin{split}
	\|Ax - B x\| \le \frac{2\eps}{1-\eps}\cdot \|Bx\|,\\
	\|Ax - B x\| \le \frac{2\eps}{1-\eps}\cdot \|Ax\|,
    \end{split}
\end{equation}
hold for all $x\in\Dom{A}=\Dom{B}$.
\end{lemma}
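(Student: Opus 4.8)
The strategy is to first establish the two-sided comparison of norms $\|Ax\|$ and $\|Bx\|$ on the core $\sE$, then bootstrap from the core to the full domain by a standard closedness argument. From \Eqref{eq.DC.6} and the triangle inequality $\|Bx\|\le \|Ax\| + \|Ax-Bx\| \le \|Ax\| + \eps(\|Ax\|+\|Bx\|)$, rearranging gives $(1-\eps)\|Bx\| \le (1+\eps)\|Ax\|$, i.e. $\|Bx\| \le \tfrac{1+\eps}{1-\eps}\|Ax\|$ for $x\in\sE$; symmetrically $\|Ax\| \le \tfrac{1+\eps}{1-\eps}\|Bx\|$. Substituting back into \Eqref{eq.DC.6} yields $\|Ax - Bx\| \le \eps\bigl(1 + \tfrac{1+\eps}{1-\eps}\bigr)\|Bx\| = \tfrac{2\eps}{1-\eps}\|Bx\|$ on $\sE$, and likewise with $\|Ax\|$ on the right — this is \Eqref{eq.DC.7} restricted to $\sE$. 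The condition $\eps < 1/3$ guarantees $\tfrac{2\eps}{1-\eps} < 1$, which will be needed for the invertibility step and is the natural threshold here.

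Next I would pass from $\sE$ to $\Dom{A}$. Let $x\in\Dom{A}$ and pick $x_n\in\sE$ with $x_n\to x$, $Ax_n\to Ax$ (core property). The estimate $\|Ax_n - Bx_n\|\le \tfrac{2\eps}{1-\eps}\|Ax_n\|$ shows $\|Bx_n - Bx_m\| \le \|Ax_n-Ax_m\| + \|(A-B)x_n\| + \|(A-B)x_m\|$ is Cauchy once one also controls the cross terms; more cleanly, $\|B(x_n-x_m)\| \le \|A(x_n-x_m)\| + \|(A-B)(x_n-x_m)\| \le \bigl(1+\tfrac{2\eps}{1-\eps}\bigr)\|A(x_n-x_m)\|\to 0$, so $(Bx_n)$ converges; since $B$ is closed and $x_n\to x$, we get $x\in\Dom{B}$ and $Bx_n\to Bx$. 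Hence $\Dom A\subseteq\Dom B$, and the inequalities \Eqref{eq.DC.6}, \Eqref{eq.DC.7} pass to the limit on all of $\Dom A$.

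For the reverse inclusion $\Dom B\subseteq\Dom A$ I would use boundedness of $B^{-1}$ together with the now-established bound $\|(A-B)x\|\le \tfrac{2\eps}{1-\eps}\|Ax\|$ on $\Dom A$. Actually the cleanest route: on $\Dom A$ we have $A = B + (A-B) = B\bigl(1 + B^{-1}(A-B)\bigr)$, but one must be careful that $(A-B)$ maps into the right space; alternatively, observe $AB^{-1}$ is defined on $B(\Dom A\cap\Dom B)$ — here it is cleaner to note directly that $\|Bx\|\le \tfrac{1}{1-2\eps/(1-\eps)}\|Ax\|=\tfrac{1-\eps}{1-3\eps}\|Ax\|$ on $\Dom A$, and then run the symmetric closedness argument: given $y\in\Dom B$, approximate... but $\sE$ need not be a core for $B$. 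Instead, use that $B$ is boundedly invertible: for $y\in\Dom B$ set $z=By\in X$; since $B(\sE)$ need not be dense I would instead argue that the operator $AB^{-1}:B(\Dom A)\to X$ is bounded with norm $<\infty$ by the estimate just derived, and $B(\Dom A)$ is dense in $X$ (as $A$ is densely defined and $B^{-1}$ is a homeomorphism onto $\Dom B$ with the graph... ) — the main obstacle is exactly this: ensuring that the core hypothesis on $A$ alone suffices to force $\Dom B\subseteq \Dom A$. The resolution I expect the authors to use is that since $A-B$ is bounded relative to $A$ with relative bound $<1$ (indeed $\le \tfrac{2\eps}{1-\eps}<1$), the operator $B=A-(A-B)$ is closed on exactly $\Dom A$ by the standard perturbation lemma for closed operators (bounded perturbation relative to a closed operator with relative bound $<1$ is closed on the same domain); combined with $\Dom A\subseteq\Dom B$ and $B$ being already closed, the closed graph / minimality forces $\Dom B=\Dom A$. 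So the key step — and the one most prone to subtlety — is the relative-boundedness perturbation argument that upgrades the one-sided inclusion to equality of domains.
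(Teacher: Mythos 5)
Your derivation of \eqref{eq.DC.7} on $\sE$ and the passage from $\sE$ to $\Dom{A}$ (core property of $\sE$ plus closedness of $B$) are correct and essentially the paper's argument. The problem is the final step, which you yourself identify as the main obstacle: your proposed resolution does not close it. The perturbation lemma you invoke yields only that $B|_{\Dom{A}}=A+(B-A)$ is \emph{closed} on $\Dom{A}$, and ``closedness plus minimality'' does not force $\Dom{B}=\Dom{A}$: a densely defined, closed, injective operator can be a proper closed restriction of a closed boundedly invertible operator. For instance, on $\ell^2$ let $Be_n=ne_n$, $v=(1/n)_n$, and $D=\{x\in\Dom{B}\,:\,\langle Bx,v\rangle=0\}$; then $D$ is dense in $\ell^2$ (the functional is $\ell^2$-discontinuous), $B|_{D}$ is closed (its graph is the intersection of the graph of $B$ with the kernel of a graph-continuous functional), yet $D\neq\Dom{B}$. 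So closedness of $B|_{\Dom A}$ alone is not enough; what is missing is \emph{surjectivity} of $B|_{\Dom{A}}$ onto $X$.

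That surjectivity is exactly what your own estimates give once assembled as in the paper: for $y\in X$ one has $A^{-1}y\in\Dom A$ and hence $\|(B-A)A^{-1}y\|\le\tfrac{2\eps}{1-\eps}\|y\|$ with $\tfrac{2\eps}{1-\eps}<1$ (this is where $\eps<1/3$ is actually used), so $BA^{-1}=I+(B-A)A^{-1}$ is invertible in $\sL(X)$ by the Neumann series and therefore $B(\Dom{A})=BA^{-1}(X)=X$. Since $B$ is injective on $\Dom{B}$ (being boundedly invertible), every $y\in\Dom{B}$ is the unique $B$-preimage of $By$, which already lies in $\Dom{A}$; hence $\Dom{B}=B^{-1}(X)=\Dom{A}$. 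Equivalently, you could cite the full-strength version of the perturbation theorem, which under relative bound $b<1$ with $a=0$ gives bounded invertibility of $A+(B-A)$ on $\Dom A$, not merely closedness; either way the surjectivity step must appear explicitly. With this replacement your proof is complete and coincides with the paper's.
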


\begin{proof} Clearly, for $x\in\sE$ we infer from \Eqref{eq.DC.6} that
\begin{equation}\label{eq.DC.8}
     \|Ax - Bx\| \le 2 \eps\cdot \|Ax\| + \eps\cdot \|Ax-Bx\|,
\end{equation}
hence the first inequality in \Eqref{eq.DC.7} for $x\in\sE$;
the second is seen by exchanging $A$ and $B$. Thus for $x$ in the dense subspace $A(\sE)\subset X$ we have
\begin{align*}
   \| B A\ii x - x \| & = \| B (A\ii x) - A (A\ii x) \|\\
                      %% & \le p \|x\| + p \| B A\ii x \| \\
                      & \le 2\, \eps\cdot \|x\| + \eps\cdot \| B A\ii x - x \|,
\end{align*}
hence 
\begin{equation}\label{eq.DC.9}
 \| BA\ii x - x\| \le \frac{2\eps}{1-\eps} \cdot\|x\|; \quad \frac{2\eps}{1-\eps} < 1. 
\end{equation}
If $x\in X$ let $(x_n)\in A(\sE)$ be a sequence with $x_n\to x$.
Then \Eqref{eq.DC.9} and the closedness of $B$ imply that $A\ii x\in\Dom{B}$
and \Eqref{eq.DC.9} holds for $x$ as well. Since $A\ii(X) = \Dom{A}$
it follows that $\Dom{A}\subset\Dom{B}$ and \Eqref{eq.DC.6}, \eqref{eq.DC.7}
hold for all $x\in\Dom{A}$. From \Eqref{eq.DC.9} it follows that
$BA\ii$ is bounded and invertible and thus $B$ maps
$\Dom{A}$ bijectively onto $X$. Since $B$ is assumed to be invertible
$\Dom{B}\to X$,
it follows that $\Dom{B}=B\ii(X) = \Dom{A}$.
\end{proof}

For future reference we note
\begin{cor}\label{p.DC.4} If under the assumptions of Lemma \plref{p.DC.3},
$\sX\subset X$ is a dense subspace then $A\ii(\sX)$ is a core
for $A$ and for $B$.
\end{cor}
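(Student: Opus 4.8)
The plan is to reduce everything to the two already-established facts in Lemma \plref{p.DC.3}: that $\Dom A = \Dom B$ and that $BA\ii$ is a bounded invertible operator on $X$ (this last coming from \Eqref{eq.DC.9}). Once these are in hand the statement is essentially a soft argument about cores, so I expect no serious obstacle — the only thing to be careful about is keeping track of which closed operator the graph norm is being taken with respect to.

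First I would fix a dense subspace $\sX\subset X$ and consider $\sY := A\ii(\sX)$. Since $A\ii\colon X\to\Dom A$ is a bounded bijection (boundedly invertible with inverse $A$) and $\sX$ is dense in $X$, the image $\sY$ is dense in $\Dom A$ for the norm $\|x\|$; but I need density in the \emph{graph norm} $\|x\| + \|Ax\|$, which is a finer statement. To get this, recall $\sE$ is a core for $A$, so it suffices to approximate each $x\in\sE$ in graph norm by elements of $\sY$. Given $x\in\sE$, pick $x_n\in\sX$ with $x_n\to Ax$ in $X$; then $A\ii x_n\in\sY$ and $A\ii x_n\to A\ii(Ax)=x$ in $X$ while $A(A\ii x_n)=x_n\to Ax$, so $A\ii x_n\to x$ in the graph norm of $A$. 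This shows $\sY$ is a core for $A$.

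Next I would upgrade this to $B$. By Lemma \plref{p.DC.3} the domains coincide, $\Dom A=\Dom B$, and the two graph norms $\|x\|+\|Ax\|$ and $\|x\|+\|Bx\|$ are equivalent: indeed, from \Eqref{eq.DC.6} and \Eqref{eq.DC.7} one has $\|Ax\|\le\|Bx\|+\|Ax-Bx\|\le\|Bx\|+\frac{2\eps}{1-\eps}\|Ax\|$, so $\|Ax\|\le\frac{1-\eps}{1-3\eps}\|Bx\|$ and symmetrically $\|Bx\|\le\frac{1-\eps}{1-3\eps}\|Ax\|$, where $1-3\eps>0$ by hypothesis. Hence a subspace of $\Dom A=\Dom B$ is dense in the $A$-graph norm if and only if it is dense in the $B$-graph norm, i.e. it is a core for $A$ iff it is a core for $B$. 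Applying this to $\sY=A\ii(\sX)$, which we just showed to be a core for $A$, we conclude it is also a core for $B$, which is the claim.

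The main point to watch — the nearest thing to an obstacle — is the passage from norm density of $A\ii(\sX)$ in $\Dom A$ to graph-norm density; this is exactly where the core hypothesis on $\sE$ (rather than mere density of $\sE$) and the boundedness of $A\ii$ are both used. Everything after that is the formal equivalence of the two graph norms, which is immediate from the estimates in Lemma \plref{p.DC.3}.
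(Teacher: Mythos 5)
Your proof is correct and follows essentially the same route as the paper: $A^{-1}$ carries dense subspaces of $X$ to graph-norm dense subspaces of $\Dom{A}$, and the estimates of Lemma \ref{p.DC.3} make the graph norms of $A$ and $B$ equivalent, so cores coincide. The only superfluous step is the detour through the core $\sE$ in the first part --- your argument (pick $x_n\in\sX$ with $x_n\to Ax$, then $A^{-1}x_n\to x$ in graph norm) works verbatim for an arbitrary $x\in\Dom{A}$, so the core hypothesis on $\sE$ is not actually needed there.
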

\begin{proof} Since $A$ is boundedly invertible, $A\ii$ is a Banach
space isomorphism from $X$ onto $\dom(A)$, the latter being equipped
with the graph norm. Hence $A\ii$ maps dense subspaces of $X$ onto
dense subspaces of $\dom(A)$ and vice versa. \Eqref{eq.DC.7}
shows that the graph norms of $A$ and $B$ are equivalent,
thus cores for $A$ are cores for $B$ and vice versa.
\end{proof}

\begin{prop}\label{p.DC.5}
Let $S,T$ be weakly (anti)commuting operators. Then \reftwo\ of
Definition \plref{p.main.1} holds with $\sE=\domT$. That is
$(S+\gl)\ii\bl\domT\br\subset \sF(S,T)$ for $\gl\in i\R,
|\gl|\ge \gl_0$. Furthermore, we have for $\gl,\mu\in i\R,
|\gl|, |\mu|\ge \gl_0$
\[
    \sF(S,T)=\ran\bl (T+\mu)\ii\cdot (S+\gl)\ii \br = 
       \ran\bl (S+\gl)\ii\cdot (T+\mu)\ii \br,
\]
that is, \textup{(2)} of Theorem \plref{p.main.7} holds.
\end{prop}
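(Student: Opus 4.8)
The plan is to prove the statement first for \emph{weakly commuting} $S,T$ by a direct argument, and then to deduce the \emph{weakly anticommuting} case from it via the Clifford reduction of Section \ref{ss.Clifford}. In the commuting case the idea is to recognise $\sF(S,T)$ as the intersection of the domains of two boundedly invertible operators whose difference is controlled by $[S,T]_-$, and to feed this into Lemma \ref{p.DC.3}.

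For the commuting case, fix $\gl,\mu\in i\R$ with $|\gl|,|\mu|\ge\gl_0$, where $\gl_0$ is chosen large enough that, besides the requirements of weak commutativity, one has $2C/\gl_0<1/3$ with $C$ the constant of Lemma \ref{p.DC.2}. Consider the closed, densely defined, boundedly invertible operators $A:=(T+\mu)(S+\gl)$ and $B:=(S+\gl)(T+\mu)$ on $E$, with inverses $A\ii=(S+\gl)\ii(T+\mu)\ii$, $B\ii=(T+\mu)\ii(S+\gl)\ii$ and natural composition domains $\dom(A)=(S+\gl)\ii(\domT)$, $\dom(B)=(T+\mu)\ii(\domS)$. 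The description of $\sF(S,T)$ in the third bullet of Remark \ref{p.main.3} says precisely that $\dom(A)\cap\dom(B)=\sF(S,T)$. On this intersection \eqref{eq.DC.3} gives $Bx-Ax=[S+\gl,T+\mu]_-x=[S,T]_-x$, and Lemma \ref{p.DC.2} bounds $\|[S,T]_-x\|$ by both $\eps\|Ax\|$ and $\eps\|Bx\|$ with $\eps:=2C/\gl_0<1/3$; in particular the hypothesis \eqref{eq.DC.6} of Lemma \ref{p.DC.3} holds on any subset of $\sF(S,T)$. The remaining ingredient is a core for $A$ lying inside $\dom(A)\cap\dom(B)=\sF(S,T)$, and for this I would take $\sE':=(S+\gl)\ii(\sE)$, where $\sE\subset\domT$ is the core for $T$ furnished by \reftwo\ of Definition \ref{p.main.1}: by hypothesis $\sE'\subset\sF(S,T)$, and $\sE'$ is a core for $A$ because $(T+\mu)(\sE)$ is dense in $E$ (as $T+\mu$ is a graph-norm homeomorphism $\domT\to E$) while $A\ii$ maps $(T+\mu)(\sE)$ onto $\sE'$ and is a graph-norm homeomorphism $E\to\dom(A)$. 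Lemma \ref{p.DC.3} then forces $\dom(A)=\dom(B)$, whence $\sF(S,T)=\dom(A)\cap\dom(B)=\dom(A)=\dom(B)$, i.e. $\sF(S,T)=(S+\gl)\ii(\domT)=(T+\mu)\ii(\domS)$; and since $\dom(A)=\ran(A\ii)$, $\dom(B)=\ran(B\ii)$, this also gives $\ran\bigl((T+\mu)\ii(S+\gl)\ii\bigr)=\sF(S,T)=\ran\bigl((S+\gl)\ii(T+\mu)\ii\bigr)$, i.e.\ part \textup{(2)} of Theorem \ref{p.main.7}.

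For the weakly anticommuting case I would pass to $\hat E=E\oplus E$ and use Section \ref{ss.Clifford}: the pair $(s_1,t_2)$ there is weakly commuting, with $\sF(s_1,t_2)=\sF(S,T)\oplus\sF(S,T)$, $\Dom{s_1}=\domS\oplus\domS$, $\Dom{t_2}=\domT\oplus\domT$, and \reftwo\ holding for the full domain $\Dom{t_2}$. Applying the case just proved to $(s_1,t_2)$ gives $(s_1+\gl)\ii(\Dom{t_2})=\sF(s_1,t_2)=(t_2+\mu)\ii(\Dom{s_1})$. To descend, a direct computation with the definitions of $s_1,t_2$ shows that the diagonal $\{(e,e):e\in E\}$ is invariant under $s_1+\gl$, on which it acts as $S+\gl$, and that $\{(e,-ie):e\in E\}$ is invariant under $t_2+\mu$, on which it acts as $T+\mu$. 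Intersecting the two displayed resolvent identities with these invariant subspaces and using the three direct-sum decompositions yields $(S+\gl)\ii(\domT)=\sF(S,T)=(T+\mu)\ii(\domS)$; the range identities then follow as in the commuting case, since $(T+\mu)\ii(E)=\domT$ and $(S+\gl)\ii(E)=\domS$.

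The main obstacle is the domain bookkeeping needed to make Lemma \ref{p.DC.3} applicable: one must ensure that $\sF(S,T)$ is \emph{exactly} $\dom(A)\cap\dom(B)$ (this is where the description in Remark \ref{p.main.3} enters), and one must exhibit a core for $A$ sitting inside this intersection. The asymmetry of \reftwo\ of Definition \ref{p.main.1}, which only hands us $(S+\gl)\ii$ of a core for $T$, is exactly what dictates the choice $\sE'=(S+\gl)\ii(\sE)$: this set is simultaneously contained in $\sF(S,T)$ and a core for $A$. Everything else --- the estimates, the choice of $\gl_0$, and the Clifford descent once the $2\times2$ matrix identities are written out --- is routine given Lemmas \ref{p.DC.2} and \ref{p.DC.3}.
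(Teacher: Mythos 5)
Your proposal is correct and follows essentially the same route as the paper: reduce the anticommuting case to the commuting one via the Clifford matrices of Section \ref{ss.Clifford}, then apply Lemma \ref{p.DC.3} to $A=(T+\mu)(S+\gl)$ and $B=(S+\gl)(T+\mu)$ with the core $(S+\gl)\ii(\sE)\subset\sF(S,T)$ obtained via Cor.~\ref{p.DC.4} and the smallness of $[S,T]_-$ from Lemma \ref{p.DC.2}. Your only addition is to spell out the Clifford descent (invariant diagonal subspaces) and the identification $\dom(A)\cap\dom(B)=\sF(S,T)$, which the paper leaves implicit.
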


\begin{proof}
The discussion in Section \ref{ss.Clifford} shows that the statement for
weakly commuting operators implies that for weakly anticommutating operators,
so it suffices to consider only this case.  Choosing $\gl_0$ in
\Eqref{eq.main.4} large enough we find
%We now find for $\gl,\mu\in i\R, |\gl|,|\mu|\ge \gl_0$ and
for $x\in\sF(S,T)$:
\begin{align*}
	\| (S+\gl)\cdot (T+\mu) x &- (T+\mu)\cdot(S+\gl) x\| = \| (ST-TS)x \| \\
             &  \le \eps\cdot \Bl \|(T+\mu)(S+\gl) x\| + \|(S+\gl)(T+\mu) x\| \Br,
\end{align*}
with $\eps$ as small as we please, e.g. $ < \frac 13$.

The result now follows from Lemma \ref{p.DC.3} with 
$A=(T+\mu)(S+\gl),$ and $B=(S+\gl)(T+\mu)$.
Note that $\sE$ of Definition \ref{p.main.1} is a core
for $T$ and hence 
$(S+\gl)\ii\bl \sE \br\subset \sF(S,T)\subset \Dom{B}$
is a core for $A$, \cf Cor. \ref{p.DC.4}.
\end{proof}

\begin{cor}\label{p.DC.6} 
Let $S,T$ be weakly (anti)commuting operators. Then 
$\sF(S,T)$ is a core for $S$ as well as a core for $T$.
\end{cor}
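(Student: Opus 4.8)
The plan is to deduce Corollary \ref{p.DC.6} directly from Proposition \ref{p.DC.5} together with the auxiliary Lemma \ref{p.DC.3} and its Corollary \ref{p.DC.4}. Since the discussion in Section \ref{ss.Clifford} reduces the weakly anticommuting case to the weakly commuting case (the Clifford trick turns $(S,T)$ weakly anticommuting into pairs $(s_i,t_j)$, $i\neq j$, that are weakly commuting, and $\sF(s_i,t_j) = \sF(S,T)\oplus\sF(S,T)$), it suffices to treat the weakly commuting case and then read off the statement for $S,T$ from the corresponding statement for the $\hat S$, $\hat T$ on $\hat E = E\oplus E$.

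First I would fix $\gl,\mu\in i\R$ with $|\gl|,|\mu|\ge\gl_0$, with $\gl_0$ large enough for Lemma \ref{p.DC.2} and Proposition \ref{p.DC.5} to apply. As in the proof of Proposition \ref{p.DC.5}, set $A := (T+\mu)(S+\gl)$ and $B := (S+\gl)(T+\mu)$; both are closed, densely defined, and boundedly invertible on $E$, with common invariant (dense) domain-subspace $\sF(S,T)$, on which the estimate $\|Ax - Bx\| = \|(ST-TS)x\| \le \eps(\|Ax\|+\|Bx\|)$ holds with $\eps < 1/3$. Proposition \ref{p.DC.5} already tells us $\sF(S,T) = \ran((S+\gl)\ii(T+\mu)\ii) = A\ii(E)$, so $\sF(S,T)$ is the \emph{entire} domain $\Dom A = \Dom B$; in particular it is trivially a core for $A$ and for $B$. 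By Lemma \ref{p.DC.3} the graph norms of $A$ and $B$ are equivalent on this common domain.

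Next I would descend from $A$, $B$ to $S$, $T$. The key point is that on $\sF(S,T)$ the graph norm of $A = (T+\mu)(S+\gl)$ controls, and is controlled by, the joint graph norm $\|x\| + \|Sx\| + \|Tx\|$: expanding $(T+\mu)(S+\gl)x = TSx + \mu Sx + \gl Tx + \gl\mu x$ and using the weak commutation estimate \Eqref{eq.main.4} to bound $\|[S,T]x\|$ — hence $\|TSx\|$ up to $\|STx\|$ and lower-order terms — one gets $\|Ax\| + \|x\|$ comparable to $\|Sx\| + \|Tx\| + \|x\|$ on $\sF(S,T)$; a symmetric computation does the same for $B$. (In fact Proposition \ref{p.DC.1} already records exactly this equivalence between $\|x\|+\|(S+T)x\|$ and $\|x\|+\|Sx\|+\|Tx\|$ on $\sF(S,T)$, and one may combine the two to keep things short.) Now given $x\in\domS$, apply Corollary \ref{p.DC.4} with the dense subspace $\sX := (S+\gl)(\domS) = \ran(S+\gl) = E$... more directly: $\sF(S,T) = (S+\gl)\ii(\domT)$ by Proposition \ref{p.DC.5}, and $\domT$ is dense in $\domS$... wait — rather, since $(S+\gl)\ii$ is a graph-norm isomorphism of $E$ onto $\domS$ and $\domT$ is dense in $E$, the image $(S+\gl)\ii(\domT) = \sF(S,T)$ is dense in $\domS$ for the graph norm of $S$, i.e. $\sF(S,T)$ is a core for $S$; symmetrically, writing $\sF(S,T) = (T+\mu)\ii(\domS)$ and using density of $\domS$ in $E$ shows $\sF(S,T)$ is a core for $T$.

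The main obstacle is essentially bookkeeping rather than a genuine difficulty: one must be careful that the equality $\sF(S,T) = (S+\gl)\ii(\domT) = (T+\mu)\ii(\domS)$ from Proposition \ref{p.DC.5} does not depend on the particular $\gl,\mu$ chosen (it holds for all admissible $\gl,\mu$), and that density of $\domT$ in $E$ (which is automatic, domains being dense submodules) together with $(S+\gl)\ii$ being a topological isomorphism $E \to (\domS, \text{graph norm})$ really does give density of the image in the graph norm — this is exactly the first sentence of the proof of Corollary \ref{p.DC.4}. Everything else is the triangle inequality and the resolvent bound $\|(S+\gl)\ii\|\le 1/|\gl|$. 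I would therefore present the proof in two short lines invoking Proposition \ref{p.DC.5} and Corollary \ref{p.DC.4}.
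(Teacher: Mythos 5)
Your proposal is correct and, once the unnecessary detour through the graph norms of $A=(T+\mu)(S+\gl)$ and $B=(S+\gl)(T+\mu)$ is stripped away, it is exactly the paper's argument: $(S+\gl)\ii$ is a topological isomorphism from $E$ onto $\domS$ with its graph norm, so it maps the dense submodule $\domT$ onto a core for $S$, which by Proposition \ref{p.DC.5} is contained in $\sF(S,T)$, and symmetrically for $T$ via $(T+\mu)\ii(\domS)$. The Clifford reduction and the comparison of the graph norms of $A$ and $B$ are not needed for this corollary.
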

\begin{proof} For $\gl\in i\R, \gl\not=0$ the resolvent
$\Sl$ maps the dense subspace $\domT$ into a core
for $S$ and the resolvent $\Tl$ maps the dense subspace
$\domS$ into a core for $T$. By Prop. \ref{p.DC.5}
and Definition \ref{p.main.1} we have for $|\gl|$ large
enough $\Sl(\domT)\subset \sF(S,T)$ and
$\Tl(\domS) \subset \sF(S,T)$, hence the claim.
\end{proof}

\section{Approximation of the resolvent of the sum $S+T$}
\label{s.RA}

During the whole section let $S,T$ be a weakly anticommuting pair
of self-adjoint regular operators in the Hilbert-$B$-module $E$.

By Proposition \ref{p.DC.5}, we have the equality of submodules
\[
    \sF(S,T)=\ran  (S+\gl )\ii (T+\mu)\ii =\ran (T+\mu)\ii (S+\gl)\ii 
\]
for $\gl,\mu\in i\R, |\gl|, |\mu|\ge \gl_0$ large enough.
The operators $ST$ and $TS$ are defined on $\sF(S,T)\subset \domS\cap\domT$, 
which by Cor. \ref{p.DC.6} is a core for $S$ as well as for $T$.
We now consider the operator
\[
    A_\gl :\sF(S,T)\to E, \quad A_\gl x:=Sx+Tx+\frac{TSx}{\gl},
\]
which approximates $S+T$ strongly on $\sF(S,T)$. In this section we show that
this approximation holds in a much stronger sense.

\subsection{The fundamental estimate}

\begin{lemma}\label{p.RA.1} 
  For $\gl, \mu\in i\R, |\gl-\mu| < |\gl|$, the operator 
$A_\gl +\mu: \sF(S,T)\to E$ is bijective and hence boundedly invertible.
\end{lemma}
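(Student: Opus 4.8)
The goal is to show that $A_\gl+\mu = S+T+\frac{TS}{\gl}+\mu$, viewed as an operator $\sF(S,T)\to E$, is a bijection. The natural strategy is to factor this operator as a product of the resolvents $(S+\gl)^{-1}$ and $(T+\mu)^{-1}$ times something manifestly invertible. First I would compute, for $x\in\sF(S,T)$, the product $(T+\mu)^{-1}(S+\gl)^{-1}$ applied after $A_\gl+\mu$, or rather multiply $A_\gl+\mu$ on suitable sides by $S+\gl$ and $T+\mu$. The key algebraic observation is that
\[
    \gl\cdot\Bl S+T+\frac{TS}{\gl}+\mu\Br = \gl S + \gl T + TS + \gl\mu
        = (T+\gl)(S+\mu) + \text{(correction terms)},
\]
and one should choose the bookkeeping so that the correction is either zero or has the form of a lower-order term that is absorbed. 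Concretely, $(T+\gl)(S+\mu) = TS + \mu T + \gl S + \gl\mu$, so $\gl(A_\gl+\mu) - (T+\gl)(S+\mu) = (\gl-\mu)T$; alternatively $(S+\gl)(T+\mu) = ST + \mu S + \gl T + \gl\mu$ gives $\gl(A_\gl+\mu)-(S+\gl)(T+\mu) = (\gl-\mu)S + (TS-ST) = (\gl-\mu)S - [S,T]_-$. So I would pick the first factorization: on $\sF(S,T)$,
\[
    \gl\,(A_\gl+\mu) = (T+\gl)(S+\mu) + (\gl-\mu)\,T .
\]

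**Reducing to a bounded invertibility.** Now $(T+\gl)(S+\mu)$ maps $\sF(S,T)$ bijectively onto $E$ — indeed $S+\mu:\sF(S,T)\to \domT$ is a bijection for $|\mu|\ge\gl_0$ by Proposition \ref{p.DC.5} (since $\sF(S,T)=(S+\mu)^{-1}(\domT)$), and $T+\gl:\domT\to E$ is a bijection. So I would write
\[
    \gl\,(A_\gl+\mu) = (T+\gl)(S+\mu)\Bl 1 + (\gl-\mu)\,(S+\mu)^{-1}(T+\gl)^{-1}\,T\,(S+\mu)^{-1}(T+\gl)^{-1}\cdot(\,\cdots\,)\Br,
\]
— more cleanly, factor out $(T+\gl)(S+\mu)$ on the left: $\gl(A_\gl+\mu) = (T+\gl)(S+\mu)\bl 1 + R\br$ where $R := (\gl-\mu)\,(S+\mu)^{-1}(T+\gl)^{-1}T$ should be a bounded operator on $E$ of norm $<1$ when $|\gl-\mu|<|\gl|$. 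To see $R$ is bounded and small: $(T+\gl)^{-1}T = 1 - \gl(T+\gl)^{-1}$ is bounded with norm $\le 1 + |\gl|/|\gl| = 2$... that bound is too weak, so instead I would estimate $\|(S+\mu)^{-1}(T+\gl)^{-1}T\,y\|$ directly. Writing $z=(T+\gl)^{-1}y\in\domT$, we have $(T+\gl)^{-1}Ty = Tz$ and $\|(S+\mu)^{-1}Tz\|\le \frac1{|\mu|}\|Tz\|$; combined with $\|Tz\|\le\|(T+\gl)z\| = \|y\|$ (self-adjointness gives $\|T z\|\le\|(T+\gl)z\|$ for $\gl\in i\R$), this gives $\|R\,y\|\le\frac{|\gl-\mu|}{|\mu|}\|y\|$ — not quite $<1$ unless $|\gl-\mu|<|\mu|$. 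So the ordering of the factorization matters; I would instead group as $\gl(A_\gl+\mu) = (S+\mu)^{-1}\cdots$ no — rather, put $(T+\gl)(S+\mu)$ on the \emph{right}: $\gl(A_\gl+\mu) = \bl 1 + (\gl-\mu)T(S+\mu)^{-1}(T+\gl)^{-1}\br (T+\gl)(S+\mu)$, and now estimate $\|(\gl-\mu)T(S+\mu)^{-1}(T+\gl)^{-1}\|$: put $w=(T+\gl)^{-1}y$, then $(S+\mu)^{-1}w\in\domT$ (this is exactly the content of Proposition \ref{p.DC.5}: $(S+\mu)^{-1}(\domT)\subset\sF\subset\domT$), and $\|T(S+\mu)^{-1}w\|$ is controlled... this still needs care. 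The honest computation — which I expect is what the paper does — is to write $T(S+\mu)^{-1}(T+\gl)^{-1}$ and commute, or to directly estimate $\|A_\gl x\|$ from below in terms of $\|(T+\gl)(S+\mu)x\|$ using $\|(\gl-\mu)Tx\|\le|\gl-\mu|\cdot\frac1{|\gl|}\|(T+\gl)(S+\mu)x\|\cdot\frac{|\gl|}{\|S+\mu\|\text{-stuff}}$...

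**Cleanest route.** I think the cleanest and what I would actually write: for $x\in\sF(S,T)$, set $y := (S+\mu)x\in\domT$, so $x=(S+\mu)^{-1}y$. Then
\[
    \gl\,(A_\gl+\mu)x = (T+\gl)y + (\gl-\mu)T(S+\mu)^{-1}y .
\]
Since $\|(S+\mu)^{-1}\|\le\frac1{|\mu|}$ and $\|T(S+\mu)^{-1}y\|\le\|(T+\gl)(S+\mu)^{-1}y\|$ is \emph{not} bounded by $\|y\|$ directly — here I use that $(S+\mu)^{-1}y\in\domT$ so $Tv$ makes sense with $v=(S+\mu)^{-1}y$, and $\|Tv\|^2 \le \|(T+\gl)v\|^2$. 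Hmm, but $(T+\gl)v$ need not be controlled by $y$. So the right move is: $T$ and $S+\mu$ \emph{do} weakly commute, and Lemma \ref{p.DC.2} bounds $\|[S,T]_- x\|$ by a small multiple of $\|(S+\gl)(T+\mu)x\|$; but here we have the anticommuting case. In the anticommuting case the algebra is $(S+\gl)(T+\mu) = ST+\gl T+\mu S+\gl\mu$ and $[S,T]_+=ST+TS$, so $\gl(A_\gl+\mu)x - (S+\gl)(T+\mu)x = (\gl-\mu)Sx + TSx - \gl T x - ... $ let me not grind. The structural point stands: I would express $\gl(A_\gl+\mu)$ as $P\bl 1+R\br$ or $\bl 1+R\br P$ with $P\in\{(S+\gl)(T+\mu),(T+\mu)(S+\gl),(T+\gl)(S+\mu),\dots\}$ a known bijection $\sF(S,T)\to E$ (by Proposition \ref{p.DC.5}) and $R$ a bounded operator on $E$ whose norm is $<1$ precisely under the hypothesis $|\gl-\mu|<|\gl|$; the norm bound on $R$ comes from the self-adjointness estimate $\|Tv\|\le\|(T\pm\gl)v\|$, $\|(T+\mu)^{-1}\|\le|\mu|^{-1}$, and (in the cross terms) the weak-anticommutativity norm estimate \Eqref{eq.main.4}. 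Then $1+R$ is invertible on $E$, $P$ is a bijection $\sF(S,T)\to E$, hence $A_\gl+\mu$ is a bijection $\sF(S,T)\to E$; boundedness of the inverse is automatic since it equals $\gl\,P^{-1}(1+R)^{-1}$ restricted appropriately, or simply because a bijective closed operator between Banach spaces has bounded inverse.

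**Main obstacle.** The real work — and the one point I would be careful about — is getting the operator $R$ to have norm strictly less than $1$ under exactly the stated condition $|\gl-\mu|<|\gl|$ (not a stronger one like $|\gl-\mu|<|\mu|$), which forces the correct choice among the several possible factorizations and the correct placement of the $\gl$-resolvent versus the $\mu$-resolvent, together with the correct use of $\|T(T+\gl)^{-1}\|\le 1$ for $\gl\in i\R$. Everything else — the algebraic identity rewriting $\gl(A_\gl+\mu)$, and the conclusion that a product of a Banach-space isomorphism with $1+(\text{small})$ is an isomorphism — is routine.
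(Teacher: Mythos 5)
Your overall strategy --- write $\gl(A_\gl+\mu)$ as (known bijection $\sF(S,T)\to E$) composed with $1+(\text{operator of norm}<1)$ --- is the right shape, but the proof is not complete: you never establish the decisive norm bound, and you say so yourself (``this still needs care'', ``let me not grind''). The factorizations you consider, e.g.\ $\gl(A_\gl+\mu)=(T+\gl)(S+\mu)+(\gl-\mu)T$, produce a correction term $R$ built from $T$ sandwiched between a $\mu$-resolvent of $S$ and a $\gl$-resolvent of $T$; as you correctly observe, the naive bound is $|\gl-\mu|/|\mu|$, which is not $<1$ under the stated hypothesis $|\gl-\mu|<|\gl|$, and repairing it via commutator estimates would both require more work and yield a more restrictive condition. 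So the key step is genuinely missing.

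The idea you did not find is to take the reference operator to be $A_\gl+\gl$, i.e.\ to match the shift parameter to the $\gl$ already present in $\frac{TS}{\gl}$. Then the factorization is \emph{exact}, with no correction term at all:
\[
   A_\gl+\gl \;=\; S+T+\frac{TS}{\gl}+\gl \;=\; \frac1\gl\,(T+\gl)(S+\gl),
\]
which is bijective $\sF(S,T)\to E$ by Proposition \ref{p.DC.5}, and whose inverse $\gl\,(S+\gl)\ii(T+\gl)\ii$ has norm $\le |\gl|\cdot\frac1{|\gl|}\cdot\frac1{|\gl|}=\frac1{|\gl|}$ by self-adjointness. Now $A_\gl+\mu=(A_\gl+\gl)+(\mu-\gl)$ is a perturbation by a \emph{scalar}, so
\[
   \bigl\|(A_\gl+\mu)(A_\gl+\gl)\ii-\Id\bigr\|
      = |\gl-\mu|\cdot\|(A_\gl+\gl)\ii\|\le\frac{|\gl-\mu|}{|\gl|}<1,
\]
and the Neumann series finishes the argument. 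This is exactly where the hypothesis $|\gl-\mu|<|\gl|$ (rather than $|\gl-\mu|<|\mu|$) comes from, and it sidesteps entirely the cross-term and commutator difficulties you ran into. Your closing remark that a bijective closed operator has bounded inverse is fine and matches the paper's use of the Closed Graph Theorem.
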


\begin{proof} Recall that for a closable operator in a Banach space
it is a consequence of the Closed Graph Theorem that being bijective is
equivalent to being closed \emph{and} boundedly invertible.
We have 
\begin{equation}\label{eq.RA.1} 
    \begin{split}
        A_\gl + \mu & = \gl\ii (T+\gl)(S+\gl) + \mu -\gl =: B_\gl +\mu-\gl.
    \end{split}
\end{equation}

For $\gl=\mu$ large enough this operator is boundedly invertible 
by Proposition \ref{p.DC.5}. Moreover, since $\gl, \mu\in i\R$ and $S, T$
are self-adjoint, we have $\|(A_\gl+\gl)\ii\|\le \frac 1{|\gl|}$. Thus
\[
\begin{split}
    \bigl\| (A_\gl+\mu)(A_\gl+\gl)\ii - \Id \bigr\|
      & = |\gl-\mu|\cdot \|(A_\gl+\gl)\ii\| \le \frac{|\gl-\mu|}{|\gl|}<1,
\end{split}
\]
hence the claim.
\end{proof}

%and thus a resolvent equation
%\[(A_\gl +\mu)B_\gl \ii =1+(\mu-\gl)B_\gl \ii .\] 
%Now observe that
%\[\|(\mu-\gl)B_\gl \ii \|\leq \frac{|\mu-\gl|}{|\gl|}=\left|1-\frac{|\mu|}{|\gl|}\right|,\]
%so for $|\gl|>|\mu|$ the operator $A_\gl +\mu$ is a bijection from $\sF(S,T)$ onto $E$. 
%\end{proof}

It is immediately clear that for $x\in\sF(S,T)$ and $|\gl|\to \infty$ we have
the convergence
\[
    (A_\gl +\mu)x\to (S+T+\mu)x.
\]
We will show that this convergence holds in the resolvent sense. We now
proceed to derive the form estimate that provides the cornerstone of our
argument. 
\begin{lemma}\label{p.RA.2}
For $\gl\in i\R\setminus\{0\}$, $|\gl|\ge \gl_0$ large enough
the operators  $A_\gl :=S+T+ \frac{TS}{\gl}$ are closed on the domain 
\[
  \Dom{ A_\gl  }:=\sF(S,T)
\]
and satisfy $\Dom{ A_\gl  }= \Dom{ A_\gl ^{ *} }$.
For $\gl\in i\R, |\gl|\ge\gl_0$ and $\mu\in i\R$ with $|\gl-\mu|<\gl$,
the operator $A_\gl+\mu$ is boundedly invertible. Moreover, there exist
$C, \mu_0>0$ such that for $\gl, \mu\in i\R$, $|\gl|>|\mu|\ge \mu_0,
\gl/\mu>0$ we have
\begin{equation}\label{eq.RA.2}
   \|(A_\gl +\mu )\ii \|\le \frac{\sqrt 2}{|\mu|},\quad
   \| S(A_\gl +\mu )\ii \|\le \sqrt 2, \quad 
   \| T(A_\gl +\mu )\ii \|\le \sqrt 2, 
\end{equation}
\begin{equation}\label{eq.RA.3}
   \|\frac{TS}{\gl}(A_\gl +\mu )\ii \| \le 1,\quad 
   \|[S,T](A_\gl +\mu )\ii \| \le C.
\end{equation}
\end{lemma}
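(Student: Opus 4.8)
The plan is to establish everything by reducing to the weakly \emph{commuting} situation via the Clifford-matrix device of Section \ref{ss.Clifford}, and there to exploit the factorization $A_\gl+\gl = \gl\ii(T+\gl)(S+\gl)$ recorded in \Eqref{eq.RA.1} together with the smallness estimate of Lemma \plref{p.DC.2}. First I would dispose of the statements that are already in hand: that $A_\gl$ is closed on $\sF(S,T)$ with $\Dom{A_\gl}=\Dom{A_\gl^*}$, and that $A_\gl+\mu$ is boundedly invertible for $|\gl-\mu|<|\gl|$, is exactly Lemma \plref{p.RA.1} (the self-adjointness of the domain follows because $A_\gl^* = S+T+\frac{ST}{\ovl\gl}$ has the \emph{same} domain $\sF(S,T) = (S+\gl)\ii(\domT) = (T+\mu)\ii(\domS)$ by the symmetry established in Proposition \plref{p.DC.5}). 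So the real content is the six uniform bounds in \Eqref{eq.RA.2} and \Eqref{eq.RA.3}.

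Next I would set up the key computation. Writing $y = (A_\gl+\mu)\ii x$ for $x\in E$, one wants lower form bounds for $\inn{(A_\gl+\mu)y,(A_\gl+\mu)y}$ in terms of $\inn{y,y}$, $\inn{Sy,Sy}$, $\inn{Ty,Ty}$. Expanding $\|(S+T+\frac{TS}{\gl}+\mu)y\|^2$ and using that $S,T$ are self-adjoint and $\gl,\mu\in i\R$ with $\gl/\mu>0$ (so the cross terms $\inn{Sy,\mu y}+\inn{\mu y,Sy}$ etc. vanish while $\inn{Sy,Ty}+\inn{Ty,Sy}=\inn{[S,T]y,y}$ reappears), one gets, modulo the anticommutator terms $\inn{[S,T]y,\cdot}$ and the $\frac{TS}{\gl}$ contributions,
\begin{equation*}
\inn{(A_\gl+\mu)y,(A_\gl+\mu)y} \ge \inn{Sy,Sy}+\inn{Ty,Ty}+|\mu|^2\inn{y,y}+\inn{[S,T]y,y} - (\text{error}).
\end{equation*}
The form estimate \Eqref{eq.main.1} lets one absorb $\inn{[S,T]y,y}$ into $\eps(\inn{Sy,Sy}+\inn{Ty,Ty}) + C_\eps\inn{y,y}$ as in the proof of Proposition \plref{p.DC.1}; the terms involving $\frac{TS}{\gl}$ and $\frac{TS}{\gl}[\dots]$ are controlled using the factorization $\frac{TS}{\gl} = (A_\gl - S - T)$ together with Lemma \plref{p.DC.2}'s estimate $\|[S,T]_-(S+\gl)\ii\cdot\|$ small (after passing to the Clifford picture where $[S,T]=[s_i,t_j]_-$ for $i\ne j$), so that all the $1/\gl$-weighted pieces carry a factor that tends to $0$ as $\gl_0\to\infty$. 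Choosing $\eps$ small and then $\mu_0$ (equivalently $\gl_0$) large enough, one arrives at
\begin{equation*}
\inn{(A_\gl+\mu)y,(A_\gl+\mu)y} \ge \tfrac12\bl\inn{Sy,Sy}+\inn{Ty,Ty}\br + \tfrac12|\mu|^2\inn{y,y},
\end{equation*}
and, separately, $\inn{(A_\gl+\mu)y,(A_\gl+\mu)y}\ge |\mu|^2\inn{y,y}$ directly (the cross terms with $\mu y$ being nonnegative by the sign condition). Taking norms and substituting $y=(A_\gl+\mu)\ii x$ yields the first three inequalities of \Eqref{eq.RA.2} with the stated constant $\sqrt2$. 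For \Eqref{eq.RA.3}, the bound on $\frac{TS}{\gl}(A_\gl+\mu)\ii$ follows by writing $\frac{TS}{\gl} = A_\gl - S - T = (A_\gl+\mu) - S - T - \mu$ and combining $\|(A_\gl+\mu)(A_\gl+\mu)\ii\|=1$ with the already-proven bounds (plus $\||\mu|(A_\gl+\mu)\ii\|\le\sqrt2$) — after again using Lemma \plref{p.DC.2} to see that the naive triangle-inequality constant can be sharpened to $1$ for $\gl_0$ large; and $\|[S,T](A_\gl+\mu)\ii\|\le C$ then follows from \Eqref{eq.main.1} applied to $y=(A_\gl+\mu)\ii x\in\sF(S,T)$ together with the three bounds in \Eqref{eq.RA.2}.

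\textbf{Main obstacle.} The delicate point is the bookkeeping of the error terms coming from $\frac{TS}{\gl}$: these are genuinely unbounded operators divided by $|\gl|$, so one cannot simply say "$1/\gl$ is small" — one must repeatedly rewrite $TS = \gl(A_\gl - S - T)$ and $T S (S+\gl)\ii = \gl(A_\gl - S - T)(S+\gl)\ii$, invoking Lemma \plref{p.DC.2} (in the Clifford-doubled picture, where the anticommutator becomes a commutator) to extract a genuinely small operator-norm factor, and then feed the result back into the form estimate. Getting the constants to land exactly at $\sqrt2$ and $1$ rather than merely "some constant" requires choosing $\eps$ and $\gl_0$ in the right order and being careful that the $C^*$-algebra inequalities \Eqref{eq.DC.1}, \Eqref{eq.DC.2} are applied from below as well as from above.
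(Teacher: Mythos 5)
Your overall skeleton matches the paper's: the closedness, the equality $\Dom{A_\gl}=\Dom{A_\gl^*}$ and the bounded invertibility are indeed read off from the factorization $A_\gl+\gl=\frac1\gl(T+\gl)(S+\gl)$ together with Proposition \plref{p.DC.5} and Lemma \plref{p.RA.1}, and the quantitative bounds do come from a lower form bound for $\dlinn{(A_\gl+\mu)x}$, $x\in\sF(S,T)$. But the treatment of the $\frac{TS}{\gl}$ contributions is where the proof actually lives, and your mechanism for it does not work. The paper expands $\dlinn{(A_\gl+\mu)x}$ completely and uses three exact algebraic facts: the cross term $\inn{Sx,\frac{TS}{\gl}x}+\inn{\frac{TS}{\gl}x,Sx}$ \emph{cancels identically} (self-adjointness of $T$ plus $\bar\gl=-\gl$); the cross term with $Tx$ reduces exactly to $\inn{Tx,\frac{[S,T]}{\gl}x}+\inn{\frac{[S,T]}{\gl}x,Tx}$; and the cross term with $\mu x$ reduces to $-\frac{\mu}{\gl}\inn{[S,T]x,x}$. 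After these reductions every remaining error term involves only the \emph{anticommutator} $[S,T]$ and is absorbed by the hypothesis \Eqref{eq.main.1} via \Eqref{eq.DC.2}, while the quadratic term $\dlinn{\frac{TS}{\gl}x}$ is kept with coefficient $1$ on the good side of the inequality. Your plan to instead rewrite $TS=\gl(A_\gl-S-T)$ and invoke Lemma \plref{p.DC.2} (after Clifford doubling) is circular in spirit: that lemma controls the commutator by $\|(T+\mu)(S+\gl)x\|=|\gl|\,\|(A_\gl+\gl)x\|$, i.e.\ by the very quantity one is trying to bound from below; and without the exact cancellation the cross term $\inn{Sx,\frac{TS}{\gl}x}+\mathrm{c.c.}$ admits no useful a priori bound (a Cauchy--Schwarz split produces $\frac{1}{\eps}\dlinn{Sx}$ with a large constant, destroying positivity).

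Two further concrete problems. First, the constant $1$ in $\|\frac{TS}{\gl}(A_\gl+\mu)\ii\|\le1$ cannot be obtained by your triangle-inequality route $\frac{TS}{\gl}=(A_\gl+\mu)-S-T-\mu$, which yields $1+3\sqrt2$; Lemma \plref{p.DC.2} does not ``sharpen'' a triangle inequality. The bound comes for free from the fundamental lower estimate precisely because $\dlinn{\frac{TS}{\gl}x}$ was retained there with coefficient $1$. Second, your parenthetical claim that $\dlinn{(A_\gl+\mu)y}\ge|\mu|^2\dlinn{y}$ holds ``directly'' because ``the cross terms with $\mu y$ are nonnegative'' is false: $A_\gl$ is not symmetric, the $\mu$--$S$ and $\mu$--$T$ cross terms vanish rather than being nonnegative, and the $\mu$--$\frac{TS}{\gl}$ cross term equals $-\frac{\mu}{\gl}\inn{[S,T]x,x}$, which has no sign; this is exactly why the paper only obtains the coefficient $|\mu|^2/2$ in front of $\dlinn{x}$ and hence the constant $\sqrt2$ in \Eqref{eq.RA.2}.
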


\begin{proof}
Recall that $A_\gl+\gl = \frac 1\gl (T+\gl)(S+\gl)$. Hence
the claims about closedness, $\Dom{A_\gl} = \Dom{A_\gl^*}$
and bounded invertibility follow from Proposition \ref{p.DC.5}
and Lemma \ref{p.RA.1}. It remains to prove the
claimed estimates.

Keep in mind that $\overline{\gl}=-\gl$ and similarly for $\mu$, since these
numbers are purely imaginary.
Furthermore, we introduce the convenient abbreviation
\begin{equation}\label{eq.convenient-abbrev}
    \dlinn y := \inn{y,y}
\end{equation}
for $y\in E$.

For $x\in\sF(S,T)$ we first expand

\begin{align}
\nonumber\langle\langle (A_\gl  +\mu)  x& \rangle\rangle =
    \dlinn{ Sx }+\dlinn{ Tx }+\inn{ [S,T]x,x } +\inn{ (S+T)x, (\frac{TS}{\gl} + \mu)x } \\
   &\nonumber\qquad   +\inn{ (\frac{TS}{\gl } +\mu )x, (S+T)x }+ \dlinn{ (\frac{TS}{\gl } +\mu )x } \\
&\nonumber      =\dlinn{ Sx }+\dlinn{ Tx }+ \inn{ [S,T]x,x } +\frac{1}{\gl } \bl\inn{ Sx,TSx }-\inn{ TS x,Sx }\br \\
&\nonumber\quad    + \inn{ Tx,\frac{TS}{\gl }x }+\inn{ \frac{TS}{\gl }x, Tx } +|\mu|^2\dlinn{ x }  %\inn{\frac{1}{\gl}(\inn{ Tx,[S,T]x }+\inn{ [S,T]x,Tx })
                   +\dlinn{ \frac{TS}{\gl }x } -\frac{\mu}{\gl}\inn{ [S,T] x, x }\\
    &   =  \dlinn{ Sx }+\dlinn{ Tx }+  |\mu|^2\dlinn{ x } + \dlinn{ \frac{TS}{\gl }x } \\
    & \label{eq.RA.5}  \qquad + (1-\frac{\mu}{\gl})\inn{ [S,T]x,x } \\
    & \label{eq.RA.6}  \qquad+ \inn{ Tx,\frac{[S,T]}{\gl}x }+\inn{ \frac{[S,T]}{\gl}x,Tx }.
\end{align}

The expressions in the lines \Eqref{eq.RA.5} and \Eqref{eq.RA.6}
will be estimated separately from above and from below. Recall the estimates 
\Eqref{eq.DC.1} and \Eqref{eq.DC.2} as well as
\Eqref{eq.main.1} which will be used repeatedly.

\subsubsection*{Expression \Eqref{eq.RA.5}} We have
\begin{align*}
    \pm (1-\frac{\mu}{\gl})\inn{ [S,T]x,x } 
    &\le \frac 12 (1- \frac\mu\gl) \Bl \frac1{|\mu|} \dlinn{[S,T]x} + |\mu| \dlinn{x} \Br \\
    & \le \frac{C}{2|\mu|} \cdot \Bl \dlinn{Sx} + \dlinn{Tx} + \dlinn{x} \Br
          + \frac{|\mu|}2 \dlinn{x}.
\end{align*}

\subsubsection*{Expression \Eqref{eq.RA.6}}
\begin{align*}
   \pm \Bl  \inn{ Tx,\frac{[S,T]}{\gl}x }+\inn{ \frac{[S,T]}{\gl}x,Tx } \Br 
     &\le \frac1{|\gl|} \dlinn{Tx} + |\gl| \dlinn{\frac{[S,T]}{\gl} x } \\
   & = \frac1{|\gl|} \Bl \dlinn{Tx} + \dlinn{[S,T]x}\Br \\
   & \le \frac1{|\gl|} \bl C+1 \br \dlinn{Tx} + \frac{C}{|\gl|} \dlinn{Sx}
          + \frac{C}{|\gl|} \dlinn{x}.
\end{align*}
    
With these estimates we obtain
\begin{align*}
  \dlinn{(A_\gl + \mu)x} &\ge 
   \bl 1- \frac{C}{2|\mu|} - \frac{C}{|\gl|}\br \cdot \dlinn{Sx}
       + \bl 1- \frac{C}{2|\mu|} - \frac{C+1}{|\gl|}\br \cdot \dlinn{Tx} \\
    &\qquad + \bl |\mu|^2 - \frac{C}{2|\mu|} - \frac{C}{|\gl|} -
              \frac{|\mu|}2\br \cdot \dlinn{x}  + \dlinn{\frac{TS}{\gl} x} \\
\intertext{and since $|\mu|<|\gl|, 0 < \frac\mu\gl < 1$ }
    \ldots & \ge   \bl 1 - \frac{3/2 C+1}{|\mu|} \br  \bl \dlinn{Sx} + \dlinn{Tx}\br \\
     &\qquad  + \bl |\mu|^2 - \frac{3/2C}{|\mu|} - \frac{|\mu|}2\br \cdot \dlinn{x} 
            + \dlinn{\frac{TS}{\gl} x}.
\end{align*}
Choosing $\mu_0$ large enough we obtain for $|\mu|\ge \mu_0, |\gl|>|\mu|, \frac\mu\gl>0$
\[
  \ldots \ge \frac12 \dlinn{Sx} + \frac12 \dlinn{Tx} + \frac{|\mu|^2}{2} \dlinn{x}
            + \dlinn{\frac{TS}{\gl} x}.
\]
From this we infer for $x\in\sF(S,T)$ the inequality
$ \|(A_\gl +\mu )\ii x\|^2\le \frac{2}{|\mu|^2} \|x\|^2$, resp.
the claimed $\|(A_\gl +\mu )\ii \|\le \frac{\sqrt 2}{|\mu|}$.

Replacing $x$ by $(A_\gl+\mu)\ii x$ we find
$\| S(A_\gl +\mu )\ii \|\le \sqrt 2$,  $\| T(A_\gl +\mu )\ii \|\le \sqrt 2$, 
and $\|\frac{TS}{\gl}(A_\gl +\mu )\ii \| \le 1$.

Applying the form estimate \Eqref{eq.main.1} and taking norms yields
\begin{multline*}
     \| [S,T](A_\gl+\mu)\ii x\|^2\\
           \le C \cdot  \Bl \|(A_\gl+\mu)\ii x\|^2 
                + \|S (A_\gl+\mu)\ii x\|^2 + \|T (A_\gl +\mu )\ii \|^2 \Br \le C,
\end{multline*}
and the proof is complete.
\end{proof}

\subsection{Self-adjointness and regularity}

\begin{lemma}\label{p.RA.3} 
For $\mu_0>0$ large enough we have for $\gl,\mu\in i\R, |\gl|>|\mu|\ge\mu_0, \mu/\gl>0$
\begin{align*}
    (A_\gl +\mu )\ii \bl \domS \br
           &\subset \Sl \Tl \bl \domS \br\\ 
           &\subset \Dom{S^2} \cap \Dom{STS} \cap \Dom{ST}\cap \Dom{TS}. 
\end{align*}
\end{lemma}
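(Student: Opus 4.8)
The goal is to show two chains of inclusions. The plan is to work from the right-hand side leftwards, producing explicit resolvent formulas. First I would establish the second inclusion
\[
    \Sl\,\Tl\bl\domS\br \subset \Dom{S^2}\cap\Dom{STS}\cap\Dom{ST}\cap\Dom{TS}.
\]
Pick $y\in\domS$ and set $x:=\Sl\,\Tl\,y$. By Proposition \ref{p.DC.5} (applied with the roles of $S$ and $T$ suitably chosen, using \Eqref{eq.main.11}) we already know $\Tl\bl\domS\br=\sF(S,T)$, hence $x\in\Sl\bl\sF(S,T)\br$. I expect the key observation to be that $\Tl$ and $S$ ``almost commute'' in the sense controlled by Lemma \ref{p.DC.2}: since $\Tl y\in\domS$ whenever $y\in\domS$ (this is exactly \Eqref{eq.main.11}), one has $S\Tl y = \Tl S y + \Tl[S,T]_-\Tl y$-type identities, so $S\Tl\bl\domS\br\subset\domT$ and in fact $S\Tl\bl\domS\br\subset\sF(S,T)$ after feeding this back through the form estimate. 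Then applying $\Sl$ and using $\Sl\bl\domT\br\subset\sF(S,T)$ from Proposition \ref{p.DC.5} again gives that $x\in\domS$, $Sx\in\domT$, $S^2x$ is defined, $SxS$-compositions close up, etc. Concretely I would write $Sx = S\Sl\,\Tl\,y = \Tl\,y - \gl\,x$ (using $S\Sl = \Id - \gl\Sl$), so $Sx\in\domT$ because $\Tl y\in\domT$ and $x\in\sF(S,T)\subset\domT$; then $TSx = T\Tl y - \gl T x$ is defined since $T\Tl y\in E$ and $Tx$ is defined. Continuing, $S^2x = S\Tl y - \gl Sx$ which is defined since $S\Tl y\in E$ (as $\Tl y\in\domS$) and $Sx$ is defined; and $STSx$ follows by one more application of the same bookkeeping once one knows $TSx\in\domS$, which in turn comes from $TSx = T\Tl y - \gl Tx$ together with $T\Tl y = y - \mu\,\Tl y\in\domS$ and $Tx\in\domS$ (the latter because $x\in\sF(S,T)$).

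For the \emph{first} inclusion, $(A_\gl+\mu)\ii\bl\domS\br\subset\Sl\,\Tl\bl\domS\br$, the plan is to invert the algebraic identity
\[
    A_\gl + \mu \;=\; \gl\ii (T+\gl)(S+\gl) + \mu - \gl
\]
from \Eqref{eq.RA.1}. Equivalently $A_\gl+\mu = B_\gl + (\mu-\gl)$ with $B_\gl=\gl\ii(T+\gl)(S+\gl)$, and on $\sF(S,T)$ we have $B_\gl\ii = \gl\,\Sl\,\Tl$. The idea is: given $y\in\domS$, solve $(A_\gl+\mu)x = y$ for $x\in\sF(S,T)$ (possible and unique by Lemma \ref{p.RA.2}), and show $x$ actually lies in $\Sl\,\Tl\bl\domS\br$. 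Write $x = B_\gl\ii\bl y - (\mu-\gl)x\br = \gl\,\Sl\,\Tl\bl y - (\mu-\gl)x\br$. Then $x\in\Sl\,\Tl\bl\domS\br$ provided the argument $y-(\mu-\gl)x$ lies in $\domS$. Since $y\in\domS$ by hypothesis, this reduces to showing $x\in\domS$ — but that is immediate from $x\in\sF(S,T)\subset\domS\cap\domT$. Hence $x = \gl\,\Sl\,\Tl z$ with $z:=y-(\mu-\gl)x\in\domS$, giving exactly $x\in\Sl\,\Tl\bl\domS\br$ (the scalar $\gl$ is harmless).

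The main obstacle I anticipate is bookkeeping the second inclusion carefully: one must check that each of the four operators $S^2$, $STS$, $ST$, $TS$ is genuinely defined on $x$, and the cleanest route is the systematic use of the resolvent identities $S\Sl=\Id-\gl\Sl$ and $T\Tl=\Id-\mu\Tl$ to peel off one operator at a time, always reducing ``is $w$ in $\domS$ (or $\domT$)?'' to the already-established facts $x\in\sF(S,T)$, $\Tl\bl\domS\br=\sF(S,T)$ (from \Eqref{eq.main.11}), and $\Sl\bl\domT\br\subset\sF(S,T)$. Here the symmetry statement, part \textup{(2)} of Theorem \ref{p.main.7} — already proved in Section \ref{s.DC} — is essential, because it is what lets us use $\Tl\bl\domS\br = \sF(S,T)$ rather than only the a priori asymmetric Definition \ref{p.main.1}\reftwo. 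Once that dictionary is in place, every required domain membership is a one-line consequence, and no new estimates beyond Lemma \ref{p.RA.2} are needed.
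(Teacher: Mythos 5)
Your proposal is correct and follows essentially the same route as the paper: the first inclusion via the resolvent identity $(A_\gl+\mu)\ii = B_\gl\ii\bl 1-(\mu-\gl)(A_\gl+\mu)\ii\br$ together with $\ran(A_\gl+\mu)\ii=\sF(S,T)\subset\domS$, and the second via $S\Sl=1-\gl\Sl$, the fact that $\Tl$ preserves $\domS$, and $\ran\Sl\Tl\subset\Dom{ST}\cap\Dom{TS}$. (Only trivial quibbles: the speculative "$S\Tl y=\Tl Sy+\cdots$" detour is unnecessary, and the resolvent parameter in $T\Tl y$ should be $\gl$, not $\mu$; neither affects the argument.)
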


\begin{proof} 
With $B_\gl  = A_\gl + \gl = \frac 1\gl (T+\gl)(S+\gl)$ as in \Eqref{eq.RA.1}
we have, \cf the proof of Lemma \ref{p.RA.1},
\begin{align}
\label{reseq}    (A_\gl +\mu )\ii &= B_\gl \ii - (\mu-\gl) B_\gl \ii (A_\gl +\mu )\ii \\
\nonumber       &= B_\gl \ii (1-(\mu-\gl)(A_\gl +\mu )\ii ).
\end{align}
The operator $1-(\mu-\gl)(A_\gl +\mu )\ii $ preserves the domain of $S$,
since
\[
   \ran (A_\gl +\mu)\ii =\sF (S,T)\subset\domS,
\] 
proving the first set inclusion. The second one follows from the identity
\[
   S (S+\gl)\ii =1-\gl(S+\gl )\ii ,
\]
the fact that $(T+\gl )\ii $ preserves $\domS$ and the fact that
\[
   \ran (T+\gl )\ii (S+\gl)\ii 
       =\ran  (S+\gl )\ii (T+\gl)\ii \subset \Dom{ST}\cap \Dom{TS}.
\]
This proves the lemma.
\end{proof}

\begin{lemma}\label{anticomm} Under the same assumptions
as in Lemma \plref{p.RA.3} we have for $x\in\domS$
the equality
\begin{multline*}
    S(S+ T+\frac{TS}{\gl } +\mu )\ii x+(S-T-\frac{ST}{\gl }-\mu )\ii Sx 
         \\ =(S-T-\frac{ST}{\gl }-\mu )\ii [S,T](S+T+\frac{TS}{\gl } +\mu )\ii x,
\end{multline*}
and each of these elements is in $\domT$.
\end{lemma}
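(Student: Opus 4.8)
The identity to be proven is an algebraic resolvent identity, so the plan is to verify it by clearing denominators rather than by any analytic limiting argument. Write $A_\gl = S+T+\frac{TS}{\gl}$ and note that its formal ``mirror'' operator is $\tilde A_\gl := S - T - \frac{ST}{\gl}$, with the sign flips exactly matching the passage from the anticommutator $[S,T]=ST+TS$ to $[S,T]_-=ST-TS$ on the other side. The first step is to observe, using the factorization $A_\gl+\gl = \frac1\gl(T+\gl)(S+\gl)$ together with the analogous $\tilde A_\gl - \gl = -\frac1\gl(S+\gl)(T+\gl)$ — or directly from the relations in \Eqref{eq.main.8}–\Eqref{eq.main.9} with the Clifford matrices $\sigma_i$ — that both $A_\gl+\mu$ and $\tilde A_\gl - \mu$ are boundedly invertible with range $\sF(S,T)$ for the stated $\gl,\mu$, by Lemma \plref{p.RA.1} and Lemma \plref{p.RA.2}. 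Hence all resolvents appearing in the claimed identity make sense as bounded adjointable operators.

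The key computational step is the operator identity
\[
    S(A_\gl+\mu) + (\tilde A_\gl-\mu)S = [S,T],
\]
valid on $\sF(S,T)$, which one checks by expanding: the $S\cdot S$ terms cancel ($S^2 - S^2$ after the sign flip), the $\mu$-terms cancel, and the remaining terms are $S\bigl(T+\frac{TS}{\gl}\bigr) - \bigl(T+\frac{ST}{\gl}\bigr)S = ST - TS + \frac{STS}{\gl} - \frac{STS}{\gl} = ST+TS$ once one is careful — actually the correct bookkeeping gives $ST+TS=[S,T]$, the $\frac1\gl$-terms being exactly $\frac{STS}{\gl}-\frac{STS}{\gl}=0$. (The clean way to organize this is to write $A_\gl+\mu$ and $\tilde A_\gl-\mu$ in the factored forms above and use that $(T+\gl)(S+\gl) - (S+\gl)(T+\gl) = [S,T]_- = TS - ST$, reconciled against the $S$-conjugation; but the brute-force expansion on $\sF(S,T)$, where $ST,TS,STS$ are all defined by Lemma \plref{p.RA.3}, is equally valid.) Once this bilinear identity is in hand, one multiplies it on the left by $(\tilde A_\gl-\mu)\ii$ and on the right by $(A_\gl+\mu)\ii$, and applies both sides to $x\in\domS$, using that $(A_\gl+\mu)\ii x \in \sF(S,T)\subset\domS$ (Lemma \plref{p.RA.3}) so that $S(A_\gl+\mu)\ii x$ is defined and that $Sx\in E$ so that $(\tilde A_\gl-\mu)\ii Sx$ is defined. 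Rearranging yields exactly the displayed equation.

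Finally, for the assertion that each of these three elements lies in $\domT$: the right-hand side is $(\tilde A_\gl-\mu)\ii$ applied to an element of $E$, and $\ran(\tilde A_\gl-\mu)\ii = \sF(S,T) \subset \domT$; since all three expressions are equal, it suffices to know one of them lies in $\domT$, which we have just shown. One can also see $S(A_\gl+\mu)\ii x \in \domT$ directly from Lemma \plref{p.RA.3}, which places $(A_\gl+\mu)\ii(\domS)$ inside $\Dom{S^2}\cap\Dom{STS}\cap\Dom{ST}\cap\Dom{TS}$, whence $S(A_\gl+\mu)\ii x \in \domT$ because it lies in $\ran(S+\gl)\ii(T+\gl)\ii$-type domains; and $(\tilde A_\gl-\mu)\ii Sx \in \sF(S,T)\subset\domT$ by the same range identity applied to the mirror operator. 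The main obstacle is purely bookkeeping: keeping the sign conventions straight between $A_\gl$ and $\tilde A_\gl$ and verifying that the cross terms genuinely collapse to $[S,T]$ with the $\frac1\gl$-contributions cancelling — there is no analytic difficulty, only the risk of a sign error in the expansion.
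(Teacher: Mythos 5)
Your strategy --- establish the operator identity $S(A_\gl+\mu)+\tilde A\,S=[S,T]$ on $\sF(S,T)$ and then conjugate by the two resolvents --- is exactly the paper's, but the central identity is false for the mirror operator you chose, and this is a genuine gap rather than a bookkeeping risk. With $\tilde A=S-T-\frac{ST}{\gl}-\mu$ one computes on $\sF(S,T)$
\[
  S\Bigl(S+T+\tfrac{TS}{\gl}+\mu\Bigr)+\Bigl(S-T-\tfrac{ST}{\gl}-\mu\Bigr)S
  \;=\;2S^2+ST-TS,
\]
since both $S^2$ terms carry a plus sign (they do \emph{not} cancel) and the cross terms produce the \emph{commutator}. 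Your own expansion exposes this: you arrive at $ST-TS$ and then assert ``actually the correct bookkeeping gives $ST+TS$'' with no computation --- that sentence is where the proof breaks, and the surviving unbounded term $2S^2$ cannot be absorbed in any case. The identity that is true, and the one the paper's proof actually uses, is
\[
  S\Bigl(S+T+\tfrac{TS}{\gl}+\mu\Bigr)+\Bigl(T-S-\tfrac{ST}{\gl}-\mu\Bigr)S
  \;=\;ST+TS\;=\;[S,T],
\]
in which the $S^2$, $\frac{STS}{\gl}$ and $\mu S$ terms all cancel. (The displayed statement of the lemma has the sign of the first resolvent flipped relative to the paper's own proof; the correct reading is $(T-S-\frac{ST}{\gl}-\mu)\ii$. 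In the only application, the proof of Theorem \plref{p.RA.5}, these terms are merely estimated in norm by $O(|\gl|\ii)$, so nothing downstream is affected.)

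Two further points. Your factorization $\tilde A_\gl-\gl=-\frac1\gl(S+\gl)(T+\gl)$ is also false: the right-hand side equals $-(S+T+\frac{ST}{\gl}+\gl)$, not $S-T-\frac{ST}{\gl}-\gl$. The bounded invertibility of the mirror operator $T-S-\frac{ST}{\gl}-\mu$ on $\sF(S,T)$ therefore needs its own justification, for instance by noting that $T-S-\frac{ST}{\gl}$ is the operator ``$A_\gl$'' attached to the pair $(T,-S)$, which is again weakly anticommuting (the anticommutator only changes sign, and $(T+\mu)\ii(\domS)=\sF(S,T)$ by Proposition \plref{p.DC.5}), and adapting Lemmas \plref{p.RA.1} and \plref{p.RA.2} to that pair. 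Your argument for membership in $\domT$ is sound once the identity is repaired: the second summand and the right-hand side lie in the range of the mirror resolvent, which is $\sF(S,T)\subset\domT$, hence so does the remaining term $S(A_\gl+\mu)\ii x$.
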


\begin{proof}
By the Lemma \ref{p.RA.3} we may write, on $\domS$,
\begin{align*}
    S\bl S+T&+\frac{TS}{\gl } +\mu \br\ii + \bl T-S-\frac{ST}{\gl }-\mu \br\ii S\\ 
       &=\bl T-S-\frac{ST}{\gl }-\mu  \br\ii\cdot \\
         &\qquad \cdot  \Bl S \bl S+T+\frac{TS}{\gl } +\mu \br 
                  +  \bl T-S-\frac{ST}{\gl }-\mu  \br S \Br
                  \cdot   \bl S+T+\frac{TS}{\gl } +\mu  \br\ii\\
       &=\bl T-S-\frac{ST}{\gl }-\mu  \br\ii [S,T] \bl S+T+\frac{TS}{\gl } +\mu )\ii,
\end{align*}
as claimed. The second summand on the left hand side maps into $\domT$, as
does the right hand side, and thus the remaining term maps into $\domT$ as well.
\end{proof}

\begin{theorem}\label{p.RA.5} The operator $S+T$ is self-adjoint and regular on
$\domS\cap\domT$. More precisely, for $\mu\in i\R$ large enough the
net $\Bl (S+T+\mu )(A_\gl +\mu )\ii \Br_\gl , \gl\in i\R, |\gl|>|\mu|,
\mu/\gl>0,$ converges strongly to $1$ on $E$ as $|\gl|\to\infty$. The
module 
\[
  \sF (S,T)=\ran (S+\gl )\ii (T+\gl )\ii ,
\] 
is a core for $S+T$. The net $\Bl (S+T+\frac{TS}{\gl } +\mu )\ii \Br_\gl ,
\gl\in i\R, |\gl|>|\mu|, \mu/\gl>0,$
is norm convergent to the resolvent $(S+T+\mu )\ii $ of $S+T$ as
$|\gl|\to\infty$.
\end{theorem}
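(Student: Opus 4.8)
The statement collects four assertions that are, in fact, already set up by the preceding lemmas; what remains is to assemble them. First I would prove the strong convergence $(S+T+\mu)(A_\gl+\mu)^{-1}\to 1$ on all of $E$. For $x$ in the dense submodule $\ran\bl(S+\gl_1)^{-1}(T+\gl_1)^{-1}\br=\sF(S,T)$ (for a fixed $\gl_1$) the convergence $(A_\gl+\mu)x\to (S+T+\mu)x$ is manifest, since $\frac{TS}{\gl}x\to 0$; writing $(S+T+\mu)(A_\gl+\mu)^{-1}x = x - (S+T+\mu)(A_\gl+\mu)^{-1}\bl (A_\gl+\mu) - (S+T+\mu)\br (A_\gl+\mu)^{-1}\cdot(\ldots)$ is awkward, so instead I would argue directly: for $y\in\sF(S,T)$, $(A_\gl+\mu)^{-1}(S+T+\mu)y = y - (A_\gl+\mu)^{-1}\frac{TS}{\gl}y$, and by Lemma \ref{p.RA.2} the operator $\frac{TS}{\gl}(A_\gl+\mu)^{-1}$ is uniformly bounded, while on a core one checks $\frac{TS}{\gl}y\to 0$; a standard $3\eps$-argument with the uniform bound $\|(S+T+\mu)(A_\gl+\mu)^{-1}\|\le C$ (which follows from the $S$- and $T$-bounds in \eqref{eq.RA.2}) then upgrades this to strong convergence on all of $E$.

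\emph{Next}, dense range and essential self-adjointness. Since $(A_\gl+\mu)^{-1}$ is bounded adjointable with range $\sF(S,T)$, the image $(S+T+\mu)\bl\sF(S,T)\br$ contains $(S+T+\mu)(A_\gl+\mu)^{-1}(E)$; by the strong convergence just established, the closure of $\ran(S+T+\mu)$ contains a net converging strongly to the identity applied to each $x\in E$, hence $\ran(S+T+\mu)$ is dense for $\mu\in i\R$, $|\mu|$ large. Because $S+T$ is symmetric and closed on $\domS\cap\domT$ (closedness is part \mainclosedness, already proved via Prop.~\ref{p.DC.1} and the graph-norm equivalence), density of $\ran(S+T\pm\mu)$ for large $|\mu|$ forces $S+T$ to be self-adjoint and regular on $\domS\cap\domT$. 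For the core statement: $\sF(S,T)$ is dense, $S+T$ restricted to it has the same closure because $(A_\gl+\mu)^{-1}(E)=\sF(S,T)$ and the limit of $(A_\gl+\mu)^{-1}$ will be shown to be $(S+T+\mu)^{-1}$, whose range is exactly $\domS\cap\domT$; combined with Prop.~\ref{p.DC.1} this gives that $\sF(S,T)$ is a core for $S+T$ (and the core claims for $S$, $T$ are Cor.~\ref{p.DC.6}).

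\emph{Finally}, norm convergence of the resolvents. Lemma \ref{p.RA.2} and the second resolvent trick give, for $\gl,\gl'$ in the net, $(A_\gl+\mu)^{-1}-(A_{\gl'}+\mu)^{-1} = (A_\gl+\mu)^{-1}\bl A_{\gl'}-A_\gl\br(A_{\gl'}+\mu)^{-1} = (A_\gl+\mu)^{-1}\bl\tfrac1{\gl}-\tfrac1{\gl'}\br TS\,(A_{\gl'}+\mu)^{-1}$, and since $\|TS(A_{\gl'}+\mu)^{-1}\|\le |\gl'|\cdot\|\frac{TS}{\gl'}(A_{\gl'}+\mu)^{-1}\|$ is $O(|\gl'|)$ while $\bl\tfrac1\gl-\tfrac1{\gl'}\br=O(1/|\gl|)$... this crude bound is not good enough, so I would instead use Lemma \ref{anticomm}, which expresses $S(A_\gl+\mu)^{-1}+(\ldots)^{-1}S$ as $(\ldots)^{-1}[S,T](A_\gl+\mu)^{-1}$, the latter uniformly bounded by \eqref{eq.RA.3}; this is the mechanism that controls the difference of resolvents uniformly and shows the net $\bl(A_\gl+\mu)^{-1}\br_\gl$ is operator-norm Cauchy. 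Its limit $R$ satisfies $(S+T+\mu)R = \lim (S+T+\mu)(A_\gl+\mu)^{-1}=1$ strongly, and since $\ran R\subset\domS\cap\domT$ (as $S R$ and $TR$ are norm limits of the uniformly bounded $S(A_\gl+\mu)^{-1}$, $T(A_\gl+\mu)^{-1}$, using closedness of $S$ and $T$), we get $R=(S+T+\mu)^{-1}$.

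\emph{Main obstacle.} The delicate point is the operator-norm Cauchy estimate for the net $\bl(A_\gl+\mu)^{-1}\br_\gl$: the naive difference-of-resolvents computation loses a factor of $|\gl|$, and one genuinely needs the structural identity of Lemma \ref{anticomm} (together with the $[S,T](A_\gl+\mu)^{-1}$ bound) to see the cancellation that makes the difference $O(1/|\gl|)$. I expect the bulk of the work in the author's proof to be precisely this book-keeping, plus the routine $3\eps$-argument promoting strong convergence on $\sF(S,T)$ to strong convergence on $E$.
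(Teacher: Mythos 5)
The decisive step is the strong convergence $(S+T+\mu)(A_\gl+\mu)\ii\to 1$ on $E$, and this is where your argument has a genuine gap. Since $(S+T+\mu)(A_\gl+\mu)\ii = 1 - \frac{TS}{\gl}(A_\gl+\mu)\ii$ and the net is uniformly bounded, what is needed is pointwise convergence $\frac{TS}{\gl}(A_\gl+\mu)\ii x\to 0$ on some \emph{known} dense subspace. Your identity $(A_\gl+\mu)\ii(S+T+\mu)y = y - (A_\gl+\mu)\ii\frac{TS}{\gl}y$ has the factors in the wrong order: the most it yields is pointwise convergence of $(S+T+\mu)(A_\gl+\mu)\ii$ at points of $(S+T+\mu)\bl\sF(S,T)\br$, and the density of that set is precisely the assertion being proved, so the $3\eps$-upgrade is circular. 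Nor can you evaluate $\frac{TS}{\gl}(A_\gl+\mu)\ii$ at a fixed $y\in\sF(S,T)$ and invoke ``$\frac{TS}{\gl}y\to 0$'', because $\gl$ occurs in both factors and \Eqref{eq.RA.3} only bounds $TS(A_\gl+\mu)\ii$ by $O(|\gl|)$. The missing ingredient is exactly Lemma \ref{anticomm}, deployed \emph{here} rather than later: for $x\in\domS$ it gives $\frac{TS}{\gl}(A_\gl+\mu)\ii x = -\frac{T}{\gl}(S-T-\frac{ST}{\gl}-\mu)\ii Sx + \frac{T}{\gl}(S-T-\frac{ST}{\gl}-\mu)\ii[S,T](A_\gl+\mu)\ii x$, and both summands are $O(|\gl|\ii)$ by \Eqref{eq.RA.2} and \Eqref{eq.RA.3}; since $\domS$ is dense, uniform boundedness finishes the step.

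On the norm convergence you correctly observe that the naive second-resolvent computation between $A_\gl$ and $A_{\gl'}$ loses a factor of $|\gl|$, but the paper does not prove a Cauchy estimate at all: once dense range, and hence the existence of $(S+T+\mu)\ii$, is established, one compares directly, $(A_\gl+\mu)\ii - (S+T+\mu)\ii = -(A_\gl+\mu)\ii\frac{TS}{\gl}(S+T+\mu)\ii$, which is $O(|\gl|\ii)$ in norm because $S(S+T+\mu)\ii$ is bounded adjointable (via Prop.~\ref{p.DC.1}) and $(A_\gl+\mu)\ii T$ is uniformly bounded. Your alternative of first proving the net operator-norm Cauchy and then identifying the limit is not carried out (you acknowledge the obstacle without resolving it), and your identification step asserts that $S(A_\gl+\mu)\ii$ converges in norm, which uniform boundedness alone does not provide. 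The remaining architecture of your outline --- dense range of the closed symmetric $S+T+\mu$ on $\sF(S,T)$ implies self-adjointness, regularity and the core property --- does match the paper.
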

\begin{remark} In view of Prop. \ref{p.DC.1} the fact that $\sF(S,T)$
is a core for $S+T$ implies that for $x\in\domS\cap\domT$ there exists
a sequence $(x_n)\subset\sF(S,T)$ such that $x_n\to x, Sx_n\to Sx$,
and $Tx_n\to Tx$.
\end{remark}

\begin{proof}% Let $\mu\in i\R$, $|\mu|$ large enough, be fixed.
By definition we have
\[
   \sF(S,T)\subset\domS\cap\domT,
\]
and by Proposition \ref{p.DC.1}, the domain of the closure of $S+T$,
restricted to $\sF(S,T)$, is contained in $\domS\cap\domT$. We will show that
\[
   S+T+\mu:\sF(S,T)\to E,
\]
has dense range for $\mu\in i\R$ with $|\mu|$ sufficiently large. Since $S+T$
is symmetric and closed on $\domS\cap \domT\supset \sF(S,T)$, this proves
self-adjointness and regularity of $S+T$ on this domain,
\cf \cite[Lemma 9.7]{Lan:HCM}. Furthermore, that
% display inserted here to improve line breaks
\[
\bl S+T+\mu\br\bl \sF(S,T) \br 
\]
is dense in $E$ is equivalent to the fact that $\sF(S,T)$ is a core for
$S+T$.

By Lemmas \ref{p.RA.1} and \ref{p.RA.2}, for $|\mu|\ge |\mu_0|$
large enough, the bounded adjointable operators
$(A_\gl +\mu)\ii$ map $E$ into $\sF(S,T)$. Furthermore,
the net 
% display inserted here to improve line breaks
\[
  \Bl (S+T+\mu )(A_\gl +\mu )\ii \Br_{|\gl|>|\mu|, \mu/\gl>0}
\]
is uniformly bounded in norm and 
\begin{align*}
 (S+T+\mu )(A_\gl +\mu )\ii =1-\frac{TS}{\gl } (A_\gl +\mu )\ii ,
\end{align*}
so it suffices to show that $\frac{TS}{\gl } (A_\gl +\mu )\ii$ converges to 
$0$ strongly on $\domS$. 
There, by virtue of Lemma \ref{anticomm}, we can write
\begin{multline*}
    \frac{TS}{\gl } (A_\gl +\mu )\ii x
         = \frac{-T}{\gl }(S-T-\frac{ST}{\gl }-\mu )\ii Sx\\ 
         +\frac{T}{\gl }(S-T-\frac{ST}{\gl }-\mu )\ii [S,T](S+T+\frac{TS}{\gl } +\mu )\ii x.
\end{multline*}
Using the estimates \eqref{eq.RA.2} and \eqref{eq.RA.3}, we see that both 
summands are $O(|\gl|\ii )$ in norm, and thus converge to $0$. We conclude
indeed that the operator $S+T+\mu:\domS\cap \domT\to E$ has dense range and
it is therefore bijective as explained above.
It follows that the resolvents $(S+T+\mu )\ii $ exist and that the submodule
\[
    \sF(S,T)=\ran (S+\gl )\ii (T+\gl )\ii \subset \domS\cap \domT,
\]
is a core for $S+T$. Now $S(S+T+\mu)\ii $ is a bounded adjointable operator so
\begin{align*}
    \|(A_\gl +\mu)\ii -(S+T+\mu)\ii  \|
          &= \| (A_\gl +\mu)\ii \frac{TS}{\gl}(S+T+\mu)\ii \|\\ 
          &\leq \frac{C}{|\gl|}\|S(S+T+\mu)\ii \|\to 0,
\end{align*} 
as $|\gl|\to \infty$, which completes the proof. 
\end{proof}

\section{Applications}
\label{s.appl}

We present two applications of the main Theorem \plref{p.main.7}.

\subsection{A Dore-Venni type theorem for $S^2+T^2$}
\label{ss.DVT}

\begin{theorem}\label{p.appl.1} Let $S,T$ be weakly anticommuting operators in the
Hilbert-$B$-module $E$. Then $S^2+T^2$ is self-adjoint and regular
on $\Dom{S^2}\cap \Dom{T^2}$. The latter equals
$\NDom\bl (S+ T)^2 \br$ and is a subset of $\sF(S,T)$.
\end{theorem}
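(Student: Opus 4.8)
The strategy is to reduce the statement about $S^2+T^2$ to the main Theorem \ref{p.main.7} by a Clifford-matrix trick, exactly in the spirit of Section \ref{ss.Clifford}. First I would pass to $\hat E = E\oplus E$ and consider the self-adjoint regular operators $s_1 = \hat S\sigma_1$ and $t_2 = \hat T\sigma_2$. By the discussion in Section \ref{ss.Clifford}, since $(S,T)$ is weakly anticommuting, the pair $(s_1,t_2)$ is weakly \emph{commuting} (indeed $[s_1,t_2]_+ = (\hat S\hat T - \hat T\hat S)\sigma_1\sigma_2$, and the form estimate \Eqref{eq.main.1} transfers), and Theorem \ref{p.main.7} (in its weakly-commuting incarnation, as recorded in the last bullet before Section \ref{s.DC}) applies to $(s_1,t_2)$. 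Now $s_1 - i t_2$ and its adjoint $s_1 + i t_2$ are closed regular operators on $\Dom{\hat S}\cap\Dom{\hat T} = (\domS\cap\domT)^{\oplus 2}$, and $(s_1 - it_2)^*(s_1-it_2) = s_1^2 + t_2^2 + \text{cross terms}$; since $s_1^2 = \hat S^2$ and $t_2^2 = \hat T^2$ and the cross terms assemble (using $\sigma_1\sigma_2 = -\sigma_2\sigma_1$) into a multiple of the anticommutator which cancels, one gets $(s_1-it_2)^*(s_1-it_2) = \hat{S^2+T^2}$ on the appropriate domain. More directly: the operator $\begin{pmatrix} 0 & S+iT \\ S - iT & 0\end{pmatrix}$ is self-adjoint and regular on $(\domS\cap\domT)^{\oplus 2}$ by Theorem \ref{p.main.7}, hence so is its square, which is $\begin{pmatrix} (S+iT)(S-iT) & 0 \\ 0 & (S-iT)(S+iT)\end{pmatrix}$; expanding $(S+iT)(S-iT) = S^2 + T^2 - i[S,T]_- $ and $(S-iT)(S+iT) = S^2+T^2+i[S,T]_-$ — but one must be careful since these are unbounded; the cleaner route is that for a self-adjoint regular operator $D$, $D^2$ is self-adjoint and regular on $\Dom{D^2}$, and apply this with $D = S+iT$ viewed via the matrix.

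**Carrying it out.** Concretely I would argue: by Theorem \ref{p.main.7} applied (via Section \ref{ss.Clifford}) to the weakly commuting pair, the operator $D:=\begin{pmatrix} S & T \\ T & -S\end{pmatrix}$ is self-adjoint and regular on $(\domS\cap\domT)^{\oplus 2}$, and $D^2 = \begin{pmatrix} S^2+T^2 & 0 \\ 0 & S^2+T^2\end{pmatrix}$ on its natural domain $\Dom{D^2}$ (here the off-diagonal entries $ST+TS-? $ — actually $D^2$ has off-diagonal entry $S T - T S$? no: $(ST+T(-S))$... wait, the $(1,2)$-entry of $D^2$ is $S\cdot T + T\cdot(-S) = ST - TS$, which is the \emph{commutator}; but weak commutativity of the associated Clifford pair makes this controllable — in fact for the genuine operator $D$ self-adjoint regular, $D^2$ is automatically self-adjoint and regular on $\Dom{D^2}=\{x: Dx\in\Dom D\}$ by the functional calculus for regular operators, \cf \cite{Lan:HCM}, with no further work). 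So $S^2+T^2$ is self-adjoint and regular on $\Dom{D^2}$ restricted to the first coordinate. It then remains to identify this domain with $\Dom{S^2}\cap\Dom{T^2}$, to show it equals $\Dom{(S+T)^2}$, and to show it sits inside $\sF(S,T)$.

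**Domain identifications.** For $\Dom{(S+T)^2}$: by Theorem \ref{p.main.7}\mainclosedness\ $\sF(S,T)$ is a core for $S+T$, and $(S+T)^2$ is self-adjoint regular on $\Dom{(S+T)^2}=\{x\in\domS\cap\domT : (S+T)x\in\domS\cap\domT\}$; using the graph-norm equivalence \Eqref{eq.main.10} one shows $(S+T)x\in\domS\cap\domT \iff Sx+Tx\in\domS\cap\domT$, and combining with the same equivalence applied to the pair and with $\sF(S,T)$ one gets the chain $\Dom{(S+T)^2} = \Dom{S^2}\cap\Dom{T^2}$. For the inclusion $\Dom{S^2}\cap\Dom{T^2}\subset\sF(S,T)$: if $x\in\Dom{S^2}\cap\Dom{T^2}$ then certainly $x\in\domS\cap\domT$; one must show $Sx\in\domT$ and $Tx\in\domS$. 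This is where the weak anticommutation is used quantitatively — I would write $x = (S+\gl)\ii(T+\gl)\ii y$ for suitable $y$ (possible since, by Theorem \ref{p.main.7}\maincore/(2), $\sF(S,T)=\ran(S+\gl)\ii(T+\gl)\ii$ and one checks $y\in E$ is forced by $x\in\Dom{S^2}\cap\Dom{T^2}$ together with the resolvent identities and Lemma \ref{p.RA.3}-type arguments), and then $Sx, Tx$ land in the right domains by the mapping properties established in Lemma \ref{p.RA.3}.

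**Main obstacle.** The genuinely delicate point is the last one: proving $\Dom{S^2}\cap\Dom{T^2}\subseteq\sF(S,T)$ and the equality $\Dom{S^2}\cap\Dom{T^2}=\Dom{(S+T)^2}$. A priori $x\in\Dom{S^2}\cap\Dom{T^2}$ only gives $S^2x, T^2x\in E$ with no control on the ``mixed'' vectors $STx, TSx$, whereas membership in $\sF(S,T)$ requires precisely $Sx\in\domT$ and $Tx\in\domS$. The resolution must go through the fundamental estimates of Lemma \ref{p.RA.2}–\ref{p.RA.3} (or their Clifford-transported analogues) applied to $D=S+iT$: one expresses $(1+D^*D)\ii = (1+(S^2+T^2) \mp i[S,T]_-)\ii$-type resolvents, shows they map $E$ into $\sF(S,T)$, and then a density/closedness argument (as in the proof of Theorem \ref{p.RA.5}) pins down the domain. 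I expect this bookkeeping — keeping straight which of $ST+TS$ and $ST-TS$ appears, and that the relevant one is form-bounded — to be the bulk of the work; everything else is the functional calculus for regular operators plus the already-proven graph-norm equivalence \Eqref{eq.main.10}.
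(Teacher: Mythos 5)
Your overall strategy --- make $S^2+T^2$ appear directly as the square (or as $D^*D$) of some Clifford-matrix combination of $S$ and $T$, so that regularity follows from the functional calculus --- cannot work, and the computational error is visible in your own sketch. For the weakly commuting pair $(s_1,t_2)=(\haS\sigma_1,\haT\sigma_2)$ one has $s_1-it_2=\bigl(\begin{smallmatrix}0 & S+T\\ S-T & 0\end{smallmatrix}\bigr)$, so $(s_1+it_2)(s_1-it_2)=\diag\bigl((S-T)^2,(S+T)^2\bigr)$: the cross terms do \emph{not} cancel, they reproduce exactly the anticommutator $\pm[S,T]_+$, which is the term the theorem is about. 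Likewise $\bigl(\begin{smallmatrix}S&T\\T&-S\end{smallmatrix}\bigr)=\haS\sigma_3+\haT\sigma_1$ is the sum of a weakly \emph{commuting} pair when $(S,T)$ is weakly anticommuting, so Theorem \ref{p.main.7} does not give its self-adjointness and regularity under the stated hypotheses; and even if it did, its square has off-diagonal entries $\pm[S,T]_-$, over which weak anticommutativity gives no control whatsoever. No Clifford rearrangement removes the cross term; it only toggles between $[S,T]_+$ and $[S,T]_-$. The actual proof therefore has to treat $S^2+T^2=(S+T)^2-[S,T]_+$ as a \emph{perturbation} of $(S+T)^2$: one extends the form estimate \Eqref{eq.main.1} to all of $\domS\cap\domT$ using that $\sF(S,T)$ is a joint core, deduces $\|\ovl{[S,T]}x\|\le C(\|x\|+\|(S+T)x\|)$, upgrades this (via the spectral theorem for the regular operator $S+T$) to relative $(S+T)^2$-boundedness with arbitrarily small bound, and applies the Kato--Rellich theorem for regular operators. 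This entire mechanism --- the analytic heart of the statement --- is absent from your proposal.

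You correctly flag the domain inclusion $\LDom{(S+T)^2}\subset\Dom{S^2}\cap\Dom{T^2}\cap\sF(S,T)$ as the remaining delicate point, but your proposed resolution (writing $x=(S+\gl)\ii(T+\gl)\ii y$ and invoking Lemma \ref{p.RA.3}) is not an argument: membership of $x$ in $\Dom{S^2}\cap\Dom{T^2}$ gives no a priori control on $STx$ or $TSx$, and nothing in Lemma \ref{p.RA.3} produces the required $y$. The key device here is a second Clifford step: the pair $(\haS\sigma_3+\haT,\haS\sigma_1)$ is itself weakly anticommuting, so Theorem \ref{p.main.7}\,\reftwo\ applied to \emph{it} yields $(S+T+\gl)\ii\bl\domS\br\subset\bigsetdef{x\in\domS\cap\domT}{Sx\in\domS\cap\domT,\ Tx\in\domS}$, from which the inclusion follows by taking $x\in\LDom{(S+T)^2}$ and noting $Tx=(S+T)x-Sx\in\domT$ a posteriori. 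Also note that the direction of the final containment is the opposite of what you attempt to prove: one needs $\LDom{(S+T)^2}\subset\Dom{S^2}\cap\Dom{T^2}$ (so that the self-adjoint regular operator on $\LDom{(S+T)^2}$, having no proper symmetric extension, coincides with the symmetric operator on $\Dom{S^2}\cap\Dom{T^2}$), not the reverse inclusion your "main obstacle" paragraph aims at.
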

\begin{remark} Ignoring domain questions for the moment one has
\[
  (S+T)^2  = S^2+T^2 +  [S,T] .
\]
A posteriori it indeed turns out that $[S,T]$ is relatively
bounded w.r.t. $(S+ T)^2$ with arbitrarily small
relative bound.  Hence once the domain claims are verified the Theorem is a
consequence of the Hilbert-$C^*$-module version of the Kato-Rellich Theorem
\cite[Theorem 4.5]{KaaLes:LGP}.
\end{remark}
\begin{proof}
By Theorem \plref{p.main.7} and by \Eqref{eq.main.1} the 
closure of the operator $[S,T]$ has $\domS\cap\domT$ in its
domain and $\ovl{[S,T]}\big|_{\domS\cap\domT}$ 
is symmetric. Furthermore, 
\[\domS\cap\domT\supset \bl\bl \Dom{S^2}\cap\Dom{T^2}\br \cup \LDom{(S+T)^2} \br.\]
%In view of the estimates \Eqref{eq.main.1} and \Eqref{eq.main.10} we have
%on $\LDom{(S+T)^2}$ for any $\eps>0$
In view of Theorem \ref{p.main.7} (4) and Theorem \ref{p.RA.5}
for $x\in\domS\cap\domT$ we choose an approximating
sequence $(x_n)\subset \sF(S,T)$ with $x_n\to x, Sx_n\to Sx$, and
$Tx_n\to Tx$. By \Eqref{eq.main.1} then also $([S,T]x_n)$ converges
to $\ovl{[S,T]}x$ and hence
\begin{align}
\label{eq.appl.1}
    \dinn{ \ovl{[S,T]} x } 
                    & \le C_0 \dinn{x} + C_1 \dinn{Sx} + C_2\dinn{Tx}\\
\nonumber    & \le C \bl \dinn{x} + \dinn{(S+T)x}\br.
\end{align}
Taking norms we find
\[
    \| \ovl{[S,T]}x \| \le C \bl \|x\| + \|(S+T)x\| \br.
\]
Thus\footnote{If $A$ is a self-adjoint and regular operator
in a Hilbert-$C^*$-module it follows from the Spectral Theorem
that for any $\eps>0$ there exists a $C_\eps$ such that 
for $x\in\Dom{A^2}$ one has
$\| Ax \| \le C_\eps\cdot \|x\| + \eps\cdot \| A^2 x\| $.
} %% end of footnote
for any $\eps>0$ there exists a $C_\eps>0$ such that
for $x\in \LDom{(S+T)^2}\subset \domS\cap\domT$ one
has 
\[
    \| \ovl{[S,T]}x \| \le C_\eps\cdot \|x\| +\eps\cdot \|(S+T)^2x\|.
\]
Thus on $\LDom{(S+T)^2}$ the symmetric operator $\ovl{[S,T]}$
is $(S+T)^2$ bounded with arbitrarily small bound.
The Kato-Rellich Theorem for regular operators \cite[Theorem 4.5]{KaaLes:LGP}
now implies that
$S^2+T^2=(S+T)^2-[S,T]$
is self-adjoint and regular on $\LDom{(S+ T)^2}$.

Furthermore, this operator is obviously symmetric
on the submodule
% bad line break
\[
  \Dom{S^2}\cap\Dom{T^2}.
\]
Therefore, if we can show that 
\[
 \LDom{(S+T)^2}\subset \Dom{S^2}\cap \Dom{T^2}\cap\sF(S,T)
\]
then we are done because a self-adjoint
and regular operator does not have proper symmetric extensions. 

To this end we use the Clifford matrices of Section \ref{ss.Clifford}.
%For definiteness consider weakly anticommuting operators $S,T$
%\footnote{The details for the weakly commuting case are similar
%and left to the reader}. 
Then 
\begin{equation}\label{eq.appl.2}
\haS\sigma_3 =\begin{pmatrix} S & 0 \\ 0 & -S\end{pmatrix}, \quad
\haT     =\begin{pmatrix} T & 0 \\ 0 & T\end{pmatrix}.
\end{equation}
We have the relations
\begin{equation}\label{eq.appl.3}
\begin{split}\hat S \sigma_{3} \cdot \hat T \pm \hat T \cdot \hat S \sigma_{3}
     &= (\hat S\hat T\pm\hat T\hat S)\cdot  \sigma_{3} , \\
    (\hat S\cdot \sigma_{3} \pm \hat T)\cdot \hat S \gs_{1} 
       + \hat S\gs_{1} \cdot(\hat S\cdot\sigma_{3}\pm\hat T)  
       & =\pm (\hat S\hat T + \hat T\hat S) \cdot \gs_{3} ,\end{split}
\end{equation}
so the pair $(\haS\sigma_3,\haT)$ is also weakly anticommuting.
Furthermore, 
\begin{equation}\label{eq.appl.4}
\haS\sigma_3 + \haT =\begin{pmatrix} S+T & 0 \\ 0 & T-S\end{pmatrix},
\end{equation}
and the pair $(\haS\sigma_3+\haT,\haS\sigma_1)$ is weakly anticommuting
as well, \cf the last equation in \Eqref{eq.appl.3}. Namely, $\sF(\haS\sigma_1,\haS\sigma_3+\haT)$ by definition equals the set
\begin{align*}\bigsetdef{x\in\Dom{\haS}\cap\Dom{\haT}}{ \haS
x\in\Dom{\haS}\cap\Dom{\haT}, \haT x\in\Dom{\haS}}, 
\end{align*}
and thus
\begin{align*}
  (\haS\sigma_1+\gl)\ii \bl\Dom{\haS}\cap\Dom{\haT} \br
   &\subset \sF(\haS\sigma_1,\haS\sigma_3+\haT).
\end{align*}
By Theorem \ref{p.main.7} \reftwo\ we thus also have the
inclusion
\[
  (\haS\sigma_3+\haT+\gl)\ii \bl \Dom{\haS}  \br \subset \sF(\haS\sigma_1,
\haS\sigma_3+\haT).
\]
In view of the matrix representation \Eqref{eq.appl.4}
this implies
\begin{multline}\label{eq.appl.5}
(S+ T+\gl)\ii \bl  \domS  \br \subset 
    \bigsetdef{x\in\domS\cap\domT}{ Sx\in\domS\cap\domT, Tx\in\domS}.
\end{multline}

Now we are ready to prove the inclusion 
$\LDom{(S+T)^2}\subset \Dom{S^2}\cap\Dom{T^2}$.
Namely, let $x\in\LDom{(S+T)^2}$.  Then $(S+T+\gl)x\in \domS\cap\domT$ and
hence by \Eqref{eq.appl.5} 
$x\in\domS\cap\domT, Sx\in\domS\cap\domT, Tx\in\domS$ and in particular $x\in\sF(S,T)$. 
A posteriori $Tx = (S+T)x-Sx\in\domT$ as well. Thus we have shown that
$x\in\Dom{S^2}\cap\Dom{T^2}\cap \sF(S,T)$.
\end{proof}
\begin{cor} Let $S,T$ be weakly commuting operators in the
Hilbert-$B$-module $E$. Then $S^2+T^2$ is self-adjoint and regular
on $\Dom{S^2}\cap \Dom{T^2}$ and the latter equals
\[
   \NDom\bl (S+ iT)^{*}(S+iT) \br=\NDom\bl (S-iT)^{*}(S-iT)\br.
\]
\end{cor}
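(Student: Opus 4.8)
The plan is to deduce the corollary from Theorem~\ref{p.appl.1} by passing to a weakly anticommuting pair via the Clifford matrices of Section~\ref{ss.Clifford}. First I would fix notation: given the weakly commuting pair $(S,T)$, form $\haS = S\otimes I$ and $\haT = T\otimes I$ on $\hat E = E\oplus E$, and consider the operator $\haT\sigma_1$ (or $\haT\sigma_2$). By the discussion in Section~\ref{ss.Clifford}, since $(S,T)$ is weakly \emph{commuting}, the pair $(\haS, \haT\sigma_1)$ is weakly \emph{anticommuting}: indeed $(\haS\hat T\sigma_1 + \haT\sigma_1\haS) = (\haS\haT - \haT\haS)\sigma_1$ on $\sF(\haS,\haT\sigma_1)=\sF(S,T)^{\oplus 2}$, and the resolvent inclusion $(\haS+\gl)\ii\Dom{\haT\sigma_1}\subset\sF(\haS,\haT\sigma_1)$ follows from \Eqref{eq.main.9} exactly as in Section~\ref{ss.Clifford}. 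Then I would apply Theorem~\ref{p.appl.1} to this pair to conclude that $\haS^2 + (\haT\sigma_1)^2 = \haS^2 + \haT^2$ is self-adjoint and regular on $\Dom{\haS^2}\cap\Dom{\haT^2} = (\Dom{S^2}\cap\Dom{T^2})^{\oplus 2}$, and that this domain equals $\NDom\bl(\haS+\haT\sigma_1)^2\br$.

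Next I would identify $(\haS+\haT\sigma_1)^2$ concretely. We have
\[
   \haS + \haT\sigma_1 = \begin{pmatrix} S & T \\ T & S \end{pmatrix},
\]
which is unitarily equivalent, via $U=\tfrac{1}{\sqrt2}\begin{pmatrix} 1 & 1 \\ i & -i\end{pmatrix}$ or a similar unitary built from $\sigma_i$, to $\begin{pmatrix} S+iT & 0 \\ 0 & S-iT\end{pmatrix}$ after also using that $S\pm iT$ are adjoints of each other (item~(5) in the proof outline of the Main Theorem, established via the matrix $\begin{pmatrix}0 & S+iT\\ S-iT & 0\end{pmatrix}$). Hence $(\haS+\haT\sigma_1)^2$ is unitarily equivalent to $\begin{pmatrix}(S+iT)^*(S+iT) & 0 \\ 0 & (S-iT)^*(S-iT)\end{pmatrix}$, so its domain is $\NDom\bl(S+iT)^*(S+iT)\br \oplus \NDom\bl(S-iT)^*(S-iT)\br$. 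Comparing the two $E$-components of the domain equality $(\Dom{S^2}\cap\Dom{T^2})^{\oplus2} = \NDom\bl(\haS+\haT\sigma_1)^2\br$ then gives $\Dom{S^2}\cap\Dom{T^2} = \NDom\bl(S+iT)^*(S+iT)\br = \NDom\bl(S-iT)^*(S-iT)\br$, as claimed. Self-adjointness and regularity of $S^2+T^2$ on $\Dom{S^2}\cap\Dom{T^2}$ likewise descends from the corresponding statement for $\haS^2+\haT^2$ by restricting to one summand.

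The main obstacle I anticipate is keeping the domain bookkeeping honest through the unitary equivalence: one must verify that the unitary intertwining $\begin{pmatrix}S&T\\T&S\end{pmatrix}$ with $\mathrm{diag}(S+iT,\,S-iT)$ genuinely carries the domain of the square to the direct sum of the domains $\NDom\bl(S\pm iT)^*(S\pm iT)\br$, which relies on $(S-iT) = (S+iT)^*$ on $\domS\cap\domT$ being a \emph{closed} operator that is the adjoint of $(S+iT)$ — precisely the content of item~(5) in the outline of the proof of Theorem~\ref{p.main.7}. One also needs that $(S\pm iT)^*(S\pm iT)$ is self-adjoint (a standard fact for closed regular operators, e.g.\ \cite[Ch.~9]{Lan:HCM}) so that the identification $\NDom\bl(\haS+\haT\sigma_1)^2\br$ with $\NDom\bl(S+iT)^*(S+iT)\br\oplus\NDom\bl(S-iT)^*(S-iT)\br$ is an equality of self-adjoint operators, not merely of symmetric ones. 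Everything else is a routine translation through Section~\ref{ss.Clifford}.
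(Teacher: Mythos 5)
Your reduction to the anticommuting case breaks at the first step. If $(S,T)$ is weakly \emph{commuting}, the pair $(\hat S,\hat T\sigma_1)$ is \emph{not} weakly anticommuting: since $\sigma_1$ commutes with the diagonal operators $\hat S=S\otimes I$ and $\hat T=T\otimes I$, one has
\[
   \hat S\cdot\hat T\sigma_1+\hat T\sigma_1\cdot\hat S=(\hat S\hat T+\hat T\hat S)\sigma_1,
\]
i.e.\ you recover the \emph{anti}commutator of $S$ and $T$, which is exactly the quantity that weak commutativity does not control. The sign flip that converts a commutator into an anticommutator comes from dressing \emph{both} operators with \emph{distinct} Clifford generators, as in \eqref{eq.main.8} with $i\neq j$: only then does $\sigma_j\sigma_i=-\sigma_i\sigma_j$ produce $(\hat S\hat T-\hat T\hat S)\sigma_i\sigma_j$. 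The correct pair to feed into Theorem~\ref{p.appl.1} is $(\hat S\sigma_1,\hat T\sigma_2)$, which is what the paper uses; one still has $(\hat S\sigma_1)^2+(\hat T\sigma_2)^2=\hat S^2+\hat T^2$, so the first half of the corollary goes through.

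Your second step is also untenable as written. With your (incorrect) pair the sum is $\hat S+\hat T\sigma_1=\begin{pmatrix} S & T\\ T & S\end{pmatrix}$, which is symmetric, hence cannot be unitarily equivalent to $\mathrm{diag}(S+iT,\,S-iT)$, whose adjoint is $\mathrm{diag}(S-iT,\,S+iT)$ -- a different operator. (The actual diagonalization of $\begin{pmatrix} S & T\\ T & S\end{pmatrix}$ by $\tfrac1{\sqrt2}\begin{pmatrix}1&1\\1&-1\end{pmatrix}$ yields $\mathrm{diag}(S+T,\,S-T)$, the real combinations, consistent with the anticommuting picture rather than the commuting one.) With the correct pair no conjugation is needed: $\hat S\sigma_1+\hat T\sigma_2=\begin{pmatrix}0 & S+iT\\ S-iT & 0\end{pmatrix}$ is already off-diagonal, its square is $\mathrm{diag}\bl(S+iT)(S-iT),\,(S-iT)(S+iT)\br=\mathrm{diag}\bl(S-iT)^*(S-iT),\,(S+iT)^*(S+iT)\br$ using that $S\mp iT=(S\pm iT)^*$ on $\domS\cap\domT$ (item (5) of the outline of Theorem~\ref{p.main.7}), and comparing with $\LDom{(\hat S\sigma_1+\hat T\sigma_2)^2}=\bl\Dom{S^2}\cap\Dom{T^2}\br^{\oplus 2}$ from Theorem~\ref{p.appl.1} gives the stated domain identity. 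Your closing remarks about the closedness of $S\pm iT$ and self-adjointness of $(S\pm iT)^*(S\pm iT)$ are the right supporting facts; it is only the passage through $(\hat S,\hat T\sigma_1)$ and the fictitious unitary equivalence that must be replaced.
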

\begin{proof} \Cf Section \ref{ss.Clifford}, the operators
\[
   \hat S\sigma_1=\begin{pmatrix}0 & S \\ S & 0\end{pmatrix}, 
   \quad \hat T\sigma_2=\begin{pmatrix}0 & iT \\ -iT & 0 \end{pmatrix},
\]
are weakly anticommuting. The previous Theorem gives that
\[
  (\hat S\sigma_1)^2+(\hat T\sigma_2)^2=\hat S^2+\hat T^2,
\]
is self-adjoint and regular on 
\[
   \Dom{\hat S^2}\cap \Dom{\hat T^2}= \Bl \Dom{ S^2 }\cap \Dom{ T^2} \Br^{\oplus 2},
\]
from which we conclude that $S^2+T^2$ is self-adjoint and regular on
$\Dom{ S^2 }\cap\Dom{ T^2 }$. The domain equality
\[
   \LDom{ \bl\hat S\sigma_1+\hat T\sigma_2\br^2 }=\LDom{(S-iT)^{*}(S-iT)}\oplus \LDom{(S+iT)^{*}(S+iT)},
\]
proves the remaining claim.
\end{proof}

\subsection{Iteration}

In our early discussions on this project the following result appeared as a
problem and seemed to be a major step in the second author's program to find a
suitable category of unbounded $KK$-cycles. Therefore it was one of the
driving forces to develop the sum theory of this paper.  Although it became
clear that Theorem \ref{p.appl.4} does not resolve all remaining issues we think it is
useful to record here for future reference. 

\begin{theorem}\label{p.appl.4}
Let $S_j, j=1,2,3$, be self-adjoint and regular operators
in the Hilbert-$B$-module $E$. Assume that $(S_1,S_2), (S_2, S_3),
(S_1,S_3)$ are weakly anticommuting pairs. Then $S_1+S_2$ and $S_3$ are weakly
anticommuting and hence $S_1+S_2+S_3$ is self-adjoint and regular on 
$\Dom{S_1}\cap \Dom{S_2}\cap \Dom{S_3}$.
\end{theorem}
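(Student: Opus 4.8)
The plan is to reduce the three-operator statement to two applications of the Main Theorem \ref{p.main.7}. The first application, to the weakly anticommuting pair $(S_1,S_2)$, shows that $S:=S_1+S_2$ is self-adjoint and regular on $\Dom{S_1}\cap\Dom{S_2}$, so that it makes sense to ask whether $S$ and $S_3$ anticommute weakly; it also provides, through part (1) of Theorem \ref{p.main.7}, the graph estimate $\dinn{S_1 x}+\dinn{S_2 x}\le C\bl\dinn x+\dinn{(S_1+S_2)x}\br$ for all $x\in\Dom{S_1}\cap\Dom{S_2}$, which I will use below. I would then verify Definition \ref{p.main.1} for the pair $(S_3,S_1+S_2)$ --- in this order, with $S_1+S_2$ in the ``$T$''-slot --- and apply Theorem \ref{p.main.7} a second time to conclude that $S_3+(S_1+S_2)=S_1+S_2+S_3$ is self-adjoint and regular on $\Dom{S_1}\cap\Dom{S_2}\cap\Dom{S_3}$.

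For condition \reftwo\ the point is that, with $S_1+S_2$ in the second slot, the relevant resolvent is $(S_3+\gl)\ii$, the resolvent of one of the \emph{given} operators, rather than the resolvent of a sum. I would record the two inclusions, valid for $\gl\in i\R$ with $|\gl|$ large,
\[
  \sF(S_1,S_3)\cap\sF(S_2,S_3)\subset\sF(S_1+S_2,S_3),\qquad
  (S_3+\gl)\ii\bl\Dom{S_1}\cap\Dom{S_2}\br\subset\sF(S_1,S_3)\cap\sF(S_2,S_3).
\]
The first is immediate from the definition of $\sF$: if $S_1x,S_2x\in\Dom{S_3}$ then $(S_1+S_2)x\in\Dom{S_3}$, and if $S_3x\in\Dom{S_1}\cap\Dom{S_2}$ then $S_3x\in\Dom{S_1+S_2}$. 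The second follows by applying part (2) of Theorem \ref{p.main.7} separately to the weakly anticommuting pairs $(S_1,S_3)$ and $(S_2,S_3)$, which gives $(S_3+\gl)\ii\bl\Dom{S_i}\br=\sF(S_i,S_3)$. Composing these, $(S_3+\gl)\ii\bl\Dom{S_1}\cap\Dom{S_2}\br\subset\sF(S_1+S_2,S_3)$, which is exactly condition \reftwo\ of Definition \ref{p.main.1} for $(S_3,S_1+S_2)$ with core $\sE=\Dom{S_1}\cap\Dom{S_2}$ --- the full domain of $S_1+S_2$. Because this holds with $\sE$ equal to the entire domain, Remark \ref{p.main.3} (the identity \eqref{eq.main.3}) gives $\sF(S_1+S_2,S_3)=(S_3+\gl)\ii\bl\Dom{S_1}\cap\Dom{S_2}\br$; feeding this into the second inclusion above shows $\sF(S_1+S_2,S_3)\subset\sF(S_1,S_3)\cap\sF(S_2,S_3)$, and together with the first inclusion this yields the clean identity $\sF(S_1+S_2,S_3)=\sF(S_1,S_3)\cap\sF(S_2,S_3)$.

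With this identity, condition \refone\ is routine. On $\sF(S_1,S_3)\cap\sF(S_2,S_3)$ one has $S_1x,S_2x\in\Dom{S_3}$ and $S_3x\in\Dom{S_1}\cap\Dom{S_2}$, so the anticommutator genuinely splits, $[S_3,S_1+S_2]x=[S_1,S_3]x+[S_2,S_3]x$. I would then combine the inequality $\dinn{a+b}\le 2\dinn{a}+2\dinn{b}$, the form estimates \eqref{eq.main.1} for $(S_1,S_3)$ and for $(S_2,S_3)$, and finally the graph estimate from part (1) of Theorem \ref{p.main.7} for $(S_1,S_2)$ to absorb $\dinn{S_1x}+\dinn{S_2x}$ into $C\bl\dinn x+\dinn{(S_1+S_2)x}\br$; this delivers the form estimate \eqref{eq.main.1} for $(S_3,S_1+S_2)$ on all of $\sF(S_1+S_2,S_3)$. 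Thus $(S_3,S_1+S_2)$ is weakly anticommuting, hence so is $(S_1+S_2,S_3)$ since part (2) of Theorem \ref{p.main.7} makes condition \reftwo\ symmetric a posteriori, and a final application of Theorem \ref{p.main.7} finishes the proof.

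The step I expect to need the most care is really the \emph{order} chosen in condition \reftwo: trying to verify directly that $(S_1+S_2,S_3)$ is weakly anticommuting would force control of $(S_1+S_2+\gl)\ii$ on a core of $\Dom{S_3}$, a genuinely hard problem about the resolvent of a sum, whereas placing $S_1+S_2$ in the second slot means one only ever manipulates the resolvent of the single operator $S_3$. A related subtlety is that condition \refone\ must hold on the whole of $\sF(S_1+S_2,S_3)$, which a priori could be strictly larger than the set $\sF(S_1,S_3)\cap\sF(S_2,S_3)$ on which the anticommutator visibly splits; the equality of these two sets, obtained from Remark \ref{p.main.3}, is precisely what removes this difficulty, so I would be careful to establish it first.
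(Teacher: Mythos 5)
Your proposal is correct and follows essentially the same route as the paper: verify condition \reftwo\ with the resolvent of the single operator $S_3$ rather than that of the sum (exploiting the a posteriori symmetry from Theorem \ref{p.main.7}\,\mainsymmetry), establish $\sF(S_1+S_2,S_3)=\sF(S_1,S_3)\cap\sF(S_2,S_3)$ via Remark \ref{p.main.3}, and then obtain the form estimate by splitting the anticommutator, using \Eqref{eq.DC.1} and the estimates \Eqref{eq.main.1} for the individual pairs, and absorbing $\dinn{S_1x}+\dinn{S_2x}$ with the graph estimate \Eqref{eq.main.10}. Your closing remarks on the choice of slot and on why the equality of the two versions of $\sF$ matters match the paper's own discussion preceding its proof.
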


Let us quickly explain why this is 
remarkable.\footnote{It is more remarkable when one compares with the original
definition of weak anticommutativity, \cf Sec. \ref{ss.CPNWA}.}
% end of footnote
According to the definition one needs to verify that 
$(S_1+S_2+\gl)\ii\bl \Dom{S_3}\br \subset \sF(S_1+S_2,S_3)$. 
A priori this is difficult since we have no concrete
information about the resolvent of $S_1+S_2$. However due to Theorem
\ref{p.main.7} the definition of weak anticommutativity 
is symmetric in the operators. Hence we know that also 
$(S_3+\gl)\ii \bl \Dom{S_j} \br \subset \sF(S_j,S_3)$
and, more importantly, that it suffices to show that
$(S_3+\gl)\ii \bl \Dom{S_1} \cap \Dom{S_2} \br \subset \sF(S_1+S_2,S_3)$.
The latter turns out to be straightforward.

\begin{proof}
By Theorem \plref{p.main.7} the operator $S_1+S_2$ is self-adjoint
and regular on $\Dom{S_1}\cap\Dom{S_2}$. By assumption we have
\[
\begin{split}
     (S_3+\gl)\ii \bl \Dom{S_1} \br  &\subset \sF(S_1,S_3),\\
     (S_3+\gl)\ii \bl \Dom{S_2} \br  &\subset \sF(S_2,S_3),
\end{split}
\]
thus
\[
\begin{split}
     (S_3+&\gl)\ii \bl \Dom{S_1+S_2} \br  \subset \sF(S_1,S_3) \cap \sF(S_2,S_3)\\
        &= \bigl\{x\in\Dom{S_1}\cap\Dom{S_2}\cap\Dom{S_3} \,\bigm|\,\\
        & \qquad     S_1x, S_2x\in\Dom{S_3}, S_3x\in\Dom{S_1}\cap\Dom{S_2}\bigr\}  \\
     & \subset \sF(S_1+S_2,S_3).
\end{split}
\]
By the Remark \ref{p.main.3} \refthree\ and by Theorem
\plref{p.main.7} this implies
\[
     \sF(S_1+S_2,S_3) = (S_3+\gl)\ii \bl   \Dom{S_1+S_2}\br
      =\sF(S_1,S_3) \cap \sF(S_2,S_3).
\]
On $\sF(S_1+S_2,S_3)$ we thus have, using the abbreviation
\Eqref{eq.convenient-abbrev},
%\[
%\begin{split}
%    \| (S_1+S_2) S_3x + S_3 ( S_1+S_2 )x \| & \le 
%            \| (S_1 S_3 + S_3  S_1 )x \| + \| (S_2 S_3+ S_3 S_2 )x \|   \\
%      & \le C'_0 \cdot \|x\| + C'_1 \cdot \|S_1x\| + C'_2 \cdot \|S_2 x\| + C'_3
%\cdot \|S_3x\|  \\
%     & \le  C_0 \cdot \|x\| + C_1 \cdot \|(S_1+S_2)x\| + C_2 \cdot \|S_3 x\|,
%\end{split}
%\]
\[
\begin{split}
 \dlinn{ [S_1+S_2, S_3] x} & = 
           \dlinn{ [S_1,S_3]x} + \dlinn{ [S_2,S_3]x } \\
           & \quad \inn{ [S_1,S_3]x, [S_2,S_3]x } + \inn{ [S_2,S_3]x,
[S_1,S_3]x } \\
       & \le 2 \bl  \dlinn{ [S_1,S_3]x } + \dlinn{ [S_2,S_3]x } \br \\
       & \le  C'_0 \cdot \inn{x,x}  + C'_1 \cdot \inn{ S_1 x, S_1 x}
                     + C'_2 \inn{S_2 x, S_2 x}  + C'_3\cdot\inn{S_3x, S_3 x}  \\
       & \le  C_0 \cdot \inn{x,x} + C_1 \cdot \inn{(S_1+S_2)x, (S_1+S_2)x} 
          + C_2 \cdot \inn{S_3 x, S_3 x},
\end{split}
\]
where we used \Eqref{eq.DC.1} and the form estimate for the weakly
anticommuting pairs $(S_1,S_3), (S_2,S_3)$. Hence $S_1+S_2,S_3$ is a weakly
anticommuting pair of operators.
\end{proof}
%What if $S_{2},S_{3}$ satisfy
%\[(S_{1}+S_{2}+\gl)\ii \Dom{S_{3}}\subset \Dom{S_{3}},\]
%and
%\[\inn{ [S_{2},S_{3}]x,[S_{2},S_{3}]x }\leq C\inn{ S_{1}x,S_{1}x } +\inn{ S_{2}x,S_{2}x }+\inn{ S_{3}x,S_{3}x }\]
%\[(S_{1}+\gl)\ii \sF(S_{2},S_{3})\cap (\]

\section{Comparison to Banach space results for sums of operators}
\label{s.CBS}

There exists quite some literature on the problem of closedness
and regularity of sums of sectorial operators in Banach spaces
\cite{DPG1975,DV1987,LT1987,Fuh1993,MP1997,KW2001,PS2007,Roi2016}.
One might wonder whether and how our results compare to these.
After all a Hilbert-$C^*$-module is a Banach space and our operators
$S,T, S^2, T^2$ are \emph{sectorial} operators. It is the purpose
of this section to put this into context. 

Let $X$ be a Banach space and let $A$ be a densely defined
operator in $X$. One should think of $A$ as being $S^2$ or $T^2$ above.
The operator $A$ is called \emph{sectorial} 
in the open sector
$\Sigma_\theta:=\bigsetdef{ z\in\C}{ z\not=0, |\arg z| < \theta}$
if $\ker A=\{0\}$, $\ran A$ is dense, 
$A+\gl$ is invertible for $\gl \in \Sigma_\theta$
and 
\[
   M_\theta:= \sup_{\gl \in \Sigma_\theta} |\gl| \cdot  \| (A + \gl)\ii \| < \infty.
\]
The \emph{spectral angle} of $A$ is defined by
\[
    \phi_A:= \inf\bigsetdef{\phi>0}{%
            A \text{ is sectorial in } \Sigma_{\pi-\phi} }.
\]

Clearly, if $S$ is a self-adjoint and regular operator in the
Hilbert-$C^*$-module $E$ then $S^2$ is sectorial with spectral angle $0$,
while for any $\delta>0$ the operator $iS+\delta$ is sectorial with
spectral angle $\pi/2$. 

Now given two sectorial operators $A,B$ in a Banach space, the analogue
of the regularity problem in Hilbert-$C^*$-modules splits into the
subproblems to decide whether $A+B$ is closed on $\Dom{A}\cap\Dom{B}$
and whether $A+B+\gl$ has dense range for $\gl>0$ large (resp.
$\ovl{A+B}$ is sectorial). This should be compared to Theorem \ref{p.main.7}
\mainclosedness, \mainstrong\ and to Theorem \ref{p.appl.1}.

One of the seminal results on this is the following:

\begin{theorem}[\textup{Da Prato and Grisvard \cite{DPG1975},
 \cf also \cite[Sec.~3]{PS2007}}]
\label{p.CBSR.1}
Let $A,B$ be sectorial operators in a Banach space $X$ with spectral
angles $\phi_A,\phi_B$. Assume that $\psi_A>\phi_A, \psi_B>\phi_B, 
\psi_A+\psi_B<\pi$, 
$(A+\gl)\ii \bl \Dom{B} \br \subset \Dom{B}$, and
\begin{equation}\label{eq.CBSR.1}
\bigl\| \bl B (A+\gl)\ii - (A+\gl)\ii B\br (\mu+ B)\ii \bigr\|\\
    \le \frac{c}{(1+|\gl|)^\ga |\mu|^\gb},
\end{equation}
for $\gl\in \Sigma_{\pi-\psi_A}, \mu\in\Sigma_{\pi-\psi_B}$
and fixed numbers $ \ga,\gb>0, \gb<1, \ga+\gb>1$.

Then $\ovl{A+B}$ is sectorial with spectral angle
$\le \max(\psi_A, \psi_B)$.
\end{theorem}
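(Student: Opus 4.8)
The plan is to realize the resolvent of $A+B$ by the Da Prato--Grisvard Dunford integral \eqref{eq.intro.1}: exhibit an operator that \emph{would} be $(A+B+\gl)\ii$ if $A$ and $B$ were resolvent commuting, and control the defect produced by their non-commutativity through \eqref{eq.CBSR.1}. Write $\psi:=\max(\psi_A,\psi_B)$ and, for $\gl\in\Sigma_{\pi-\psi}$ with $|\gl|$ large, set
\[
    P_\gl:=\frac1{2\pi i}\int_\Gamma(A+z+\gl)\ii(z-B)\ii\,dz,
\]
where $\Gamma$ consists of the two rays from the origin at angles $\pm\psi_B'$, with $\psi_B'\in(\psi_B,\pi-\psi_A)$ fixed, oriented so as to surround $\sigma(B)\subset\overline{\Sigma_{\phi_B}}$. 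The interval $(\psi_B,\pi-\psi_A)$ is nonempty \emph{exactly because} $\psi_A+\psi_B<\pi$; this, together with $|\arg\gl|<\pi-\psi$, also keeps the poles $-\gl-\sigma(A)$ of $z\mapsto(A+z+\gl)\ii$ strictly on the far side of $\Gamma$ (rotating $\Gamma$ with $\arg\gl$ if necessary), which is precisely where the bound $\le\max(\psi_A,\psi_B)$ in the conclusion originates. On $\Gamma$ one has $-z\in\Sigma_{\pi-\psi_B}$ and $z+\gl\in\Sigma_{\pi-\psi_A}$, so the sectorial estimates for $(z-B)\ii$ and $(A+z+\gl)\ii$ and the hypothesis \eqref{eq.CBSR.1} (with $\mu:=-z$) all apply along $\Gamma$; in particular the integral converges absolutely in $\sL(X)$ and $\|P_\gl\|\le C/|\gl|$.

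\textbf{$P_\gl$ is an approximate resolvent.} I would first check $P_\gl(X)\subset\Dom{A}\cap\Dom{B}$: membership in $\Dom{A}$ is immediate from $A(A+z+\gl)\ii=\Id-(z+\gl)(A+z+\gl)\ii$, while membership in $\Dom{B}$ is exactly what the invariance hypothesis $(A+\gl)\ii\bl\Dom{B}\br\subset\Dom{B}$ secures, the resulting integral for $BP_\gl y$ being controlled by splitting $B(A+z+\gl)\ii=(A+z+\gl)\ii B+[B,(A+z+\gl)\ii]$ and applying \eqref{eq.CBSR.1} to the commutator term (the absolute convergence of this last integral is the usual technical nuisance of the method, handled by first working on a dense subspace of $X$ and passing to the limit by closedness of $B$). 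The central computation is then: $A+B+\gl=(A+z+\gl)+(B-z)$, and commuting $B-z$ through $(A+z+\gl)\ii$ gives, pointwise on $\Gamma$,
\[
    (A+B+\gl)(A+z+\gl)\ii(z-B)\ii=(z-B)\ii-(A+z+\gl)\ii+[B,(A+z+\gl)\ii](z-B)\ii .
\]
Integrating over $\Gamma$ and using $\frac1{2\pi i}\int_\Gamma(z-B)\ii dz=\Id$ (the holomorphic functional calculus of the sectorial operator $B$ at the function $1$, via the standard regularisation, where injectivity and dense range of $B$ are used) and $\frac1{2\pi i}\int_\Gamma(A+z+\gl)\ii dz=0$ (no pole of the integrand is encircled and the integrand decays), one gets
\[
    (A+B+\gl)P_\gl=\Id+R_\gl,\qquad R_\gl:=\frac1{2\pi i}\int_\Gamma[B,(A+z+\gl)\ii](z-B)\ii\,dz .
\]

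\textbf{Smallness of $R_\gl$ and conclusion.} By \eqref{eq.CBSR.1} with $\mu=-z$, the integrand of $R_\gl$ has norm $\le c\,(1+|z+\gl|)^{-\ga}|z|^{-\gb}$, hence
\[
    \|R_\gl\|\le\frac c{2\pi}\int_\Gamma\frac{|dz|}{(1+|z+\gl|)^\ga|z|^\gb}\le C\bigl(|\gl|^{-\ga}+|\gl|^{\,1-\ga-\gb}\bigr),
\]
where integrability at $z=0$ uses $\gb<1$, integrability at $z=\infty$ uses $\ga+\gb>1$, and the right-hand side — obtained by splitting the $s=|z|$ integral at $s=|\gl|$ — tends to $0$ as $|\gl|\to\infty$ since $\ga>0$ and $\ga+\gb>1$. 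Thus for $|\gl|\ge\gl_0$ in $\Sigma_{\pi-\psi}$ the operator $\Id+R_\gl$ is invertible by a Neumann series, so $A+B+\gl\colon\Dom{A}\cap\Dom{B}\to X$ is surjective with bounded right inverse $P_\gl(\Id+R_\gl)\ii$ of norm $\le C'/|\gl|$; standard arguments then give that $A+B$ is closable and that $\overline{A+B}+\gl$ is bijective with $\|(\overline{A+B}+\gl)\ii\|\le C'/|\gl|$. The passage from $|\gl|\ge\gl_0$ to the whole sector is the usual reduction: one rescales $(A,B)\mapsto(tA,tB)$, which preserves sectoriality with unchanged angles and transforms the left side of \eqref{eq.CBSR.1} into $c\,t^{\ga+\gb-1}(t+|\gl|)^{-\ga}|\mu|^{-\gb}$, the interplay of $\ga+\gb>1$ and $\gb<1$ keeping this under control; finally $\ker(\overline{A+B})=\{0\}$ and density of $\ran(\overline{A+B})$ follow from the resolvent bound. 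Since $\psi=\max(\psi_A,\psi_B)$, this is exactly sectoriality of $\overline{A+B}$ with spectral angle $\le\psi$.

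\textbf{Where the work lies.} The estimate of $R_\gl$ is a one-line consequence of \eqref{eq.CBSR.1}; the real effort is the geometric bookkeeping around $\Gamma$: choosing $\psi_B'$ — and, for $\arg\gl$ near the edge of the sector, rotating $\Gamma$ — so that \emph{simultaneously} the $A$-resolvent is evaluated in $\Sigma_{\pi-\psi_A}$, the $B$-resolvent in its resolvent sector, \eqref{eq.CBSR.1} applies on all of $\Gamma$, and the deformation identities $\int_\Gamma(z-B)\ii dz=\Id$ and $\int_\Gamma(A+z+\gl)\ii dz=0$ hold; and justifying $P_\gl(X)\subset\Dom{A}\cap\Dom{B}$ with genuinely convergent integrals. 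These are the places — along with the small-$|\gl|$ reduction — where the three angle conditions $\psi_A>\phi_A$, $\psi_B>\phi_B$, $\psi_A+\psi_B<\pi$ are actually exploited, and they form the least routine part of the proof.
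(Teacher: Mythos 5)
This is one of the statements the paper quotes from the literature (Da Prato--Grisvard, cf.\ Pr\"uss--Simonett) and does \emph{not} prove; the only thing the paper offers is the description of the method around \eqref{eq.CBSR.2}. Your argument is exactly that method --- the Dunford integral $P_\gl$ as an approximate resolvent, the defect $R_\gl$ controlled by \eqref{eq.CBSR.1}, a Neumann series, and a scaling argument for small $|\gl|$ --- so in spirit it reconstructs the intended proof correctly, and your accounting of where each of the hypotheses $\psi_A>\phi_A$, $\psi_B>\phi_B$, $\psi_A+\psi_B<\pi$, $\ga>0$, $\gb<1$, $\ga+\gb>1$ enters is accurate.

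Two concrete points need repair. First, your contour consists of two rays emanating from the origin, and you assert that $P_\gl$ converges absolutely in $\sL(X)$. It does not: near $z=0$ the integrand is bounded below in norm only by $\|(A+\gl)\ii(z-B)\ii\|\sim c/(|\gl|\,|z|)$, and the resolvent identity shows the contributions of the two rays do not cancel to a better order, so $\int_0 |z|\ii\,|dz|$ diverges. This is precisely why the contour in \eqref{eq.CBSR.2} is indented by a circular arc of radius $r>0$ around the origin; with that keyhole contour $P_\gl$ is absolutely convergent and the rest of your computation goes through unchanged (note that $R_\gl$ itself is fine even on your contour, since $\gb<1$ saves the integral at $z=0$; the problem is only with $P_\gl$ and with the auxiliary identities $\int_\Gamma(z-B)\ii dz=2\pi i\,\Id$ and $\int_\Gamma(A+z+\gl)\ii dz=0$, which must be read as improper/regularized integrals in any case). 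Second, $(A+B+\gl)P_\gl=\Id+R_\gl$ with $\Id+R_\gl$ invertible only yields \emph{surjectivity} of $A+B+\gl$ together with a bounded right inverse; sectoriality of $\ovl{A+B}$ also requires injectivity of $\ovl{A+B}+\gl$ and the lower bound $\|(A+B+\gl)x\|\ge c|\gl|\,\|x\|$, which in a general Banach space does not follow from surjectivity. The standard remedy is to establish the companion left identity $P_\gl(A+B+\gl)x=(\Id+\tilde R_\gl)x$ for $x\in\Dom{A}\cap\Dom{B}$ with $\|\tilde R_\gl\|<1$, which you should state explicitly rather than subsume under ``standard arguments''. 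With the indented contour and this left-inverse step added, your proof is a faithful rendering of the Da Prato--Grisvard argument.
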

In \cite{LT1987} \Eqref{eq.CBSR.1} was replaced by a more flexible
but also more involved estimate, \cf \cite[Sec.~3]{PS2007}.

Proving closedness of $A+B$ on $\Dom{A}\cap\Dom{B}$ requires
additional assumptions on the Banach space (class $\sH\sT$) and
that $A,B$ admit bounded imaginary powers. The seminal
result is that of Dore and Venni \cite{DV1987} for resolvent
commuting operators. This was later improved by
Monnieux and Pr\"uss \cite{MP1997} for non-commuting operators
satisfying the above mentioned Labbas-Terreni
\cite{LT1987} commutator condition, and by Pr\"uss and Simonett
\cite{PS2007} for pairs satisfying the Da Prato-Grisvard
commutator condition \Eqref{eq.CBSR.1}.

For our operators in a Hilbert-$C^*$-module $E$ bounded
imaginary powers are a non-issue. That is, for a self-adjoint
and regular operator $S$ in $E$ it follows trivially from
the continuous functional calculus that $S$ has bounded imaginary powers,
i.e. that $\bigsetdef{ S^{it} }{ -1 \le t \le 1 }$ is
bounded in $\sL(E)$.

The class $\sH\sT$ condition seems to be ``orthogonal'' to
Hilbert-$C^*$-module theory: recall that a Banach space
$X$ is called of class $\sH\sT$ if the Hilbert transform
\[
      Hf(t) := \lim_{\eps\searrow 0} \int_{|s|\ge \eps}
         f(t-s) \frac{ds}{s}, \quad f\in\cinfz{\R, X},
\]
extends by continuity to a bounded linear operator $L^2(\R,X)\to L^2(\R,X)$.
Here, $L^2(\R,X)$ is the completion of $\cinfz{\R,X}$ with
respect to the norm
\[
   \| f\| := \left(\int_\R \| f(x) \|_X^2 dx \right)^{1/2}.
\]
We asked experts on the aforementioned Banach space theory
but the following problem could not be clarified:

\begin{problem} Let $E$ be a Hilbert-$B$-module.
Decide whether $E$ is of class $\sH\sT$ or not.
\end{problem}

The problem here is that in general $L^2(\R,E)$ 
is not a Hilbert-$B$-module, but is
strictly smaller than the external tensor product
$L^2(\R)\hat\otimes_\C E$. The latter is the completion
of $\cinfz{\R,E}$ with respect to the inner product
\[
    \inn{f,g} = \int_\R \inn{f(x),g(x)}_E dx \in B
\]
resp. the induced norm
\[
    \| f\| = \Bigl\|  \int_\R \inn{f(x),g(x)}_E dx \Bigr\|_B^{1/2}.
\]
Clearly, since the Hilbert transform $H_0: L^2(\R)\to L^2(\R)$ is bounded 
the Hilbert transform on $L^2(\R)\hat\otimes_\C E$ is nothing but
$H_0\hat\otimes\id$, which is bounded as well. This leads to
 
\begin{problem} Is it true that for Hilbert-$C^*$-modules
the proof of the above mentioned Dore-Venni type results
go through by exploiting instead of the $\sH\sT$ condition
the (obviously true) condition of boundedness of the Hilbert transform on
$L^2(\R)\hat\otimes_\C E$.
\end{problem}

Finally, we outline an alternative approach to the main Theorem 
\ref{p.main.7} using the method due to Da Prato-Grisvard. While
their results have been generalized and refined, all subsequent
publications essentially employ the basic pattern which can already
be found in \cite{DPG1975}. Namely, given sectorial operators
$A, B$ then view $A+B+\gl$ as a (operator valued) function of
$B$. Then the resolvent $(A+B+\gl)\ii$ should be given by the
Dunford integral
\begin{equation} \label{eq.CBSR.2}
    P_\gl := \frac{1}{2\pi i} \int_\Gamma (z+\gl+A)\ii \cdot (z-B)\ii dz,
\end{equation}
where $\Gamma$ is a contour of the form
$(\infty,r) e^{i\theta} \cup r e^{i[\theta,\pi-\theta]}\cup (r,\infty) e^{-i
\theta}$ with $\psi_B<\theta<\min(\psi,\pi-\psi_A)$.

If $A$ and $B$ are resolvent commuting, then $P_\gl$ equals the
resolvent $(A+B+\gl)\ii$. In all other cases $P_\gl$ is only
an approximation to the resolvent and the main part of the work
is to formulate commutator conditions on $A$ and $B$ ensuring
that $P_\gl$ maps a sufficiently large space into $\Dom{A}\cap\Dom{B}$.

Turning to a pair of weakly anticommuting operators $S,T$ we cannot
apply the pattern outlined above since our commutator conditions
Def. \ref{p.main.1} concern commutators of $S$ and $T$ but not
of $S^2, T^2$. Therefore, it is unrealistic to prove commutator
estimates on $S^2, T^2$ \`a la Da Prato-Grisvard resp. Labbas-Terreni.

However, we can slightly modify \Eqref{eq.CBSR.2} to obtain
a resolvent approximation of $S+T+i\gl$ instead of $S^2+T^2+\gl$.\footnote{In
this section $\gl,\mu$ denote real parameters.}

Namely, from the estimate \Eqref{eq.main.4} one infers
\[
   \| [S,T] (S+i\gl)\ii (T+i \mu)\ii \| \le C \bl \frac{1}{|\gl|} +
\frac{1}{|\mu|}\br
\]
hence, \cf Theorem \ref{p.CBSR.1}, 
\[
  \bigl \| [ T^2, (S^2+\gl)\ii ] (T^2+\mu) \ii \bigr\|
     \le  \frac{C}{|\gl|}\bl \frac{1}{\sqrt{|\gl|}}+ \frac{1}{\sqrt{|\mu|}}\br.
\]
However, we may not expect the stronger domain inclusion
\[
(S^2+\gl)\ii \bl \Dom{T^2} \br \subset \Dom{T^2}
\]
to hold.

Nevertheless, without further assumptions,
these estimates and the axioms of weak anticommutativity
allow to prove
\begin{theorem}\label{p.CBSR.4}  Let
\[
   P_\gl := \frac{1}{2\pi i} \int_\Gamma (z+\gl+S^2)\ii (S+T - i \gl) (z -
   T^2)\ii dz.
\]
Then for $y\in \domS\cap\domT$ and $\gl$ large
we have $P_\gl y\in\domS\cap\domT$ and
\[
   (S+T+i\gl) P_\gl y = (I+R_\gl) y, 
\]
with $\|R_\gl\|<1$, hence $ran (S+T+i\gl)$ dense.
\end{theorem}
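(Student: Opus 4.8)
The strategy mimics the Da Prato--Grisvard Dunford-integral method, but adapted to the first-order operators $S$ and $T$ via the substitution $z\mapsto z^2$ for the sectorial operators $S^2,T^2$. The starting point is the observation that if $S^2$ and $T^2$ were resolvent commuting, then $P_\gl$ would equal $(S+T+i\gl)\ii$ exactly: the factor $(S+T-i\gl)$ is inserted precisely so that the naive computation $(S+T+i\gl)(S^2+T^2+\gl^2)\ii(S+T-i\gl)=(S^2+T^2+\gl^2)\ii(S+T+i\gl)(S+T-i\gl)=1$ collapses (after the contour integral over $z$ reconstructs $(S^2+\gl^2+ \cdots)\ii$). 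So $P_\gl$ is a genuine approximate right inverse of $S+T+i\gl$, and the error $R_\gl$ is entirely a commutator term.

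First I would make sense of the contour integral: for $\gl$ real and large, $z+\gl^2+S^2$ is boundedly invertible with norm $O(1/(|z|+\gl^2))$ uniformly along $\Gamma$, and $(z-T^2)\ii$ has norm $O(1/|z|)$ off the positive real axis, while the middle factor $S+T-i\gl$ is \emph{unbounded}; so I would only claim $P_\gl y$ is well defined for $y$ in a dense domain — here $y\in\domS\cap\domT$ suffices, since then $(S+T-i\gl)(z-T^2)\ii y = (z-T^2)\ii(S+T-i\gl)y + [\,S,(z-T^2)\ii\,]y$, and the estimate \eqref{eq.main.4} together with $\|[T^2,(z-T^2)\ii]\|$-type bounds controls the commutator, so the integrand is norm-integrable. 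This also shows $P_\gl y \in \domS\cap\domT$, because after commuting $S$ and $T$ past the resolvents the remaining integral visibly lands in the domains (the resolvents of $S^2$ map into $\Dom{S^2}\subset\domS$, etc.), and one checks closedness of $S$ and $T$ passes through the integral.

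Next I would compute $(S+T+i\gl)P_\gl y$. Write $(S+T+i\gl)(z+\gl^2+S^2)\ii = (z+\gl^2+S^2)\ii(S+T+i\gl) + [\,T,(z+\gl^2+S^2)\ii\,]$ — only $T$ fails to commute with $S^2$. Then
\[
   (S+T+i\gl)(z+\gl^2+S^2)\ii(S+T-i\gl) = (z+\gl^2+S^2)\ii\bigl((S+T)^2+\gl^2\bigr) + [\,T,(z+\gl^2+S^2)\ii\,](S+T-i\gl).
\]
Now $(S+T)^2 + \gl^2 = S^2+T^2+[S,T]+\gl^2 = (z+\gl^2+S^2) + (T^2-z) + [S,T]$, so
\[
   (z+\gl^2+S^2)\ii\bigl((S+T)^2+\gl^2\bigr) = 1 - (z+\gl^2+S^2)\ii(z-T^2) + (z+\gl^2+S^2)\ii[S,T].
\]
Inserting this into the contour integral: the term $1$ integrates against $(z-T^2)\ii$ to give $1$ (the contour encircles $\mathrm{spec}(T^2)\subset[0,\infty)$), the term $-(z+\gl^2+S^2)\ii(z-T^2)(z-T^2)\ii = -(z+\gl^2+S^2)\ii$ is holomorphic inside $\Gamma$ and integrates to $0$ by Cauchy's theorem, and what remains is
\[
   R_\gl = \frac{1}{2\pi i}\int_\Gamma (z+\gl^2+S^2)\ii[S,T](z-T^2)\ii\,dz \;+\; \frac{1}{2\pi i}\int_\Gamma [\,T,(z+\gl^2+S^2)\ii\,](S+T-i\gl)(z-T^2)\ii\,dz.
\]
So $(S+T+i\gl)P_\gl y = (I+R_\gl)y$ as claimed.

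The main obstacle — and the heart of the proof — is the estimate $\|R_\gl\|<1$ for $\gl$ large, using \emph{only} the weak-anticommutativity axioms and no stronger domain inclusion. For the first integral, $[S,T](z+\gl^2+S^2)\ii$ is bounded by the remark in the excerpt ($[S,T](S+i\mu)\ii$ bounded, hence $[S,T](S^2+\mu^2)\ii$ bounded, and one gains $1/|z+\gl^2|$), and $\|(z-T^2)\ii\|=O(1/|z|)$ on $\Gamma$; scaling $z = \gl^2 w$ and integrating shows this contributes $O(1/\gl)$ — here one exploits exactly the sharpened commutator bound $\|[S,T](S^2+\gl)\ii(T^2+\mu)\ii\|\le (C/|\gl|)(|\gl|^{-1/2}+|\mu|^{-1/2})$ displayed just before the theorem. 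For the second integral, $[\,T,(z+\gl^2+S^2)\ii\,] = -(z+\gl^2+S^2)\ii[T,S^2](z+\gl^2+S^2)\ii$ and $[T,S^2]=[S,T]S + S[S,T]$ (an identity from the appendix); combined with $\|(S+T-i\gl)(z-T^2)\ii\|$ — which one bounds by commuting $S$ across $(z-T^2)\ii$ again and using \eqref{eq.main.4} — the whole thing is again $O(1/\gl)$ after the $z$-integration. The delicate points are: keeping the unbounded middle factor under control at every stage (this forces the restriction $y\in\domS\cap\domT$ and careful bookkeeping of which resolvent absorbs which unbounded operator), and verifying that the contour $\Gamma$ with opening angle $\theta$ genuinely separates $\mathrm{spec}(T^2)$ from the pole at $z=-\gl^2-S^2$ uniformly, so that the Cauchy-theorem cancellation of the $-(z+\gl^2+S^2)\ii$ term is legitimate. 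Once $\|R_\gl\|<1$, $I+R_\gl$ is invertible, so $(S+T+i\gl)P_\gl(I+R_\gl)\ii y = y$ for all $y$ in the dense set $\domS\cap\domT$, whence $\mathrm{ran}(S+T+i\gl)$ is dense.
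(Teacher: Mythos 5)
First, a caveat: the paper states Theorem \plref{p.CBSR.4} and then explicitly writes that ``further details are omitted and hence left to the reader'', so there is no written proof to compare against --- only the two displayed commutator estimates preceding the theorem and the warning that $(S^2+\gl)\ii\bl\Dom{T^2}\br\subset\Dom{T^2}$ cannot be expected. Your outline is the Da Prato--Grisvard adaptation the authors clearly intend, and your algebraic skeleton is correct: writing $(S+T+i\gl)(z+\gl^2+S^2)\ii(S+T-i\gl)$ as $(z+\gl^2+S^2)\ii\bl(S+T)^2+\gl^2\br$ plus $[T,(z+\gl^2+S^2)\ii]_-(S+T-i\gl)$, and splitting $(S+T)^2+\gl^2=(z+\gl^2+S^2)+(T^2-z)+[S,T]$, does produce $y$, a Cauchy-vanishing term, and the two error integrals you call $R_\gl$. (You silently replaced the paper's $(z+\gl+S^2)\ii$ by $(z+\gl^2+S^2)\ii$; this is forced by $(S+T+i\gl)(S+T-i\gl)=(S+T)^2+\gl^2$ and is surely what is meant.)

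The genuine gap is in the estimate of $R_\gl$, and it sits exactly where this paper departs from its predecessors. You justify the first error term by asserting that ``$[S,T](S+i\mu)\ii$ is bounded, hence $[S,T](S^2+\mu^2)\ii$ is bounded''. Under Definition \plref{p.main.1} this is false in general: the form estimate \eqref{eq.main.1} contains both a $\dinn{Sx}$ and a $\dinn{Tx}$ term, so the only available bound is the two-resolvent estimate $\|[S,T]\Sl\Tm\|\le C(|\gl|\ii+|\mu|\ii)$ of \eqref{eq.main.4}; the one-resolvent statement is precisely the stronger hypothesis of \cite{MR2016} that the paper dispenses with (cf.\ Prop.~\plref{p.main.4}, where it corresponds to $C_2=0$). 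For your first integral this is repairable, since there $[S,T]$ is flanked by $(z+\gl^2+S^2)\ii$ on the left and $(z-T^2)\ii$ on the right, both of which factor into first-order resolvents. But in the second integral the expansion of $[T,(z+\gl^2+S^2)\ii]_-$ via \eqref{eq.A.6} yields terms such as $(z+\gl^2+S^2)\ii S\,[S,T]\,(z+\gl^2+S^2)\ii(S+T-i\gl)(z-T^2)\ii$, in which $[S,T]$ is followed only by $S$-resolvents and the unbounded middle factor; no $T$-resolvent is adjacent to it, and manufacturing one requires further commutations whose errors must themselves be controlled. That bootstrapping is the actual content of the theorem, and your outline covers it only with the phrase ``careful bookkeeping''. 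A secondary, standard but non-negligible point: the separated integrals $\int_\Gamma(z-T^2)\ii y\,dz$ and $\int_\Gamma(z+\gl^2+S^2)\ii\,dz$ are not absolutely convergent ($O(1/|z|)$ on an unbounded contour), so the identities ``$=y$'' and ``$=0$'' need the improper-integral devices of the sectorial calculus; your restriction $y\in\domS\cap\domT$ helps with the first, but the argument should say how.
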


So in principle the original idea of Da Prato-Grisvard together with
the axioms of weak anticommutativity lead to yet another proof of
the closedness and regularity statements in Theorem \ref{p.main.7}.
Further details are omitted and hence left to the reader.

\section{The Kasparov product of unbounded modules}
\label{s.KK}

In this section we describe how Theorem \ref{p.RA.5} can be applied in the
constructive approach to the Kasparov product. For background on unbounded
$KK$-theory we refer to 
\cite{BaaJul:TBK,Bla:KTO2Ed,Con1994,Kuc:KKP,Kuc2000,Mes2014,MR2016,KaaLes:SFU}.

\subsection{Weakly anticommuting operators and the Kasparov product}

Kasparov's $KK$-theory \cite{Kas:OKF} is a powerful tool in operator
$K$-theory \cite{Bla:KTO2Ed}.  It associates to a pair of separable
$C^{*}$-algebras $(A,B)$ an abelian group $KK_0(A,B)$.  The main feature of
$KK$-theory is the existence of an associative bilinear product
\begin{equation}
\label{eq.KK.1}
KK_0(A,B)\times KK_0(B,C)\to KK_0(A,C),
\end{equation}
defined for all separable $C^{*}$-algebras $A,B$ and $C$. 

A $\Z/2$-\emph{grading} on a Hilbert $C^{*}$-module $E$ is a self-adjoint
operator $\gamma\in\sL(E)$ such that $\gamma^2=1$.  An operator $F\in\sL(E)$
is \emph{even} if $F\gamma=\gamma F$ and \emph{odd} if $\gamma F=-F\gamma$.

Elements of the group $KK_0(A,B)$ are given by the following data:

\begin{dfn}\cite{Kas:OKF} Let $(A,B)$ be a pair of separable $C^{*}$-algebras.
A \emph{Kasparov module} for $(A,B)$ is a pair $(E,F)$ where 
\begin{enumerate}
\item $E$ is a $\Z/2$-graded Hilbert $C^{*}$-module over $B$ together with a
      $*$-homomoprhism $A\to \sL(E)$;
\item $F\in\sL(E)$ is an odd operator such that $a(1-F^2), a(F-F^{*})$ and
      $[F,a]$ are elements of $\sK(E)$.
\end{enumerate}
\end{dfn}
%\mpar{!!}
Here, $[\cdot,\cdot]$ denotes the \emph{graded} commutator which on
homogeneous elements $x,y$ of parity $\pl x, \pl y$ is defined by 
$[x,y] = xy - (-1)^{\pl x\cdot \pl y} yx$. That is $[a,b]=[a,b]_+$
if $a,b$ are both odd and $[a,b]=[a,b]_-$ if one of them is even.
For the commutators introduced in Section \ref{ss.WAO} we will therefore
always write $[\cdot,\cdot]_+$ resp.  $[\cdot,\cdot]_-$ to make the sign of
the second summand explicit.
%\mpar{!!}

For an $(A,B)$ Hilbert bimodule $E$ we will refer to the $C^{*}$-algebra 
\[
   \bigsetdef{K\in\sL(E)}{\forall a\in A \quad aK,Ka\in\sK(E)}
\]
as the $C^{*}$-algebra of $A$-\emph{locally compact operators on} $E$. For
self-adjoint elements $Q,R\in\sL(E)$ we say that $Q\leq R$ \emph{modulo}
$A$-\emph{locally compact operators} if there exists a locally compact
operator $K$ such that $Q\leq R+K$.

In \cite[Theorem A.5]{ConSka} Connes-Skandalis provided sufficient conditions
that determine the product \eqref{eq.KK.1}.
\begin{theorem}\label{thm.bddprd}
Let $(X,F_{X})$ and $(Y,F_{Y})$ be Kasparov modules for $(A,B)$ and $(B,C)$
respectively.  Suppose that $(X\otimes_{B}Y,F)$ is a $(A,C)$ Kasparov module
such that
\begin{enumerate}
\item for all $x\in X$ the operator $y\mapsto \gamma(x)\otimes F_{Y}y-F(x\otimes y)$
      is in $\sK(Y,X\otimes_{B}Y)$;
\item there is $0\leq \kappa<2$ such that for all $a\in A$ the operator inequality
\[
          a^{*}[F_{X}\otimes 1, F]a\geq -\kappa a^{*}a,
\]
holds modulo $A$-compact operators.
\end{enumerate}
Then $(X\otimes_{B}Y, F)$ represents the Kasparov product of $(X,F_{X})$ and
$(Y, F_{Y})$.
\end{theorem}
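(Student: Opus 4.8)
This is \cite[Theorem A.5]{ConSka}; we only indicate the structure of the proof. The starting point is Kasparov's construction of a distinguished representative of the product. Put $F_{1}:=F_{X}\otimes 1\in\sL(X\otimes_{B}Y)$, and recall that an odd contraction $G\in\sL(X\otimes_{B}Y)$ is an $F_{Y}$-\emph{connection} exactly when it satisfies condition \textup{(1)}; such $G$ exist and form an affine space over the $0$-connections, a class to which $F_{1}$ belongs. Applying Kasparov's technical theorem to the $C^{*}$-subalgebras generated by $A$ together with the connection defects of $F_{1}$ and of a fixed $F_{Y}$-connection $F_{2}$, one obtains positive $M,N\in\sL(X\otimes_{B}Y)$ with $M+N=1$, $M$ absorbing the $F_{1}$-defects, $N$ absorbing the $F_{2}$-defects, and both quasicentral relative to $A$. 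The operator $G:=M^{1/2}F_{1}M^{1/2}+N^{1/2}F_{2}N^{1/2}$ is then an $F_{Y}$-connection with $a^{*}[F_{1},G]a\ge 0$ modulo $A$-locally compact operators, and Kasparov proves that the class $\bl X\otimes_{B}Y,G\br$ is independent of all choices and equals the Kasparov product. Thus it remains to show: \emph{if $F$ satisfies} \textup{(1)} \emph{and} \textup{(2)} \emph{with some $\kappa<2$, then $(X\otimes_{B}Y,F)$ and $(X\otimes_{B}Y,G)$ are homotopic as Kasparov modules}, hence represent the same class.

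To prove this I would use that \textup{(1)} says $F$ is an $F_{Y}$-connection, so that $D:=F-G$ is a $0$-connection, and I would consider the affine path $F_{t}:=(1-t)G+tF=G+tD$, $t\in[0,1]$. That $a(F_{t}-F_{t}^{*})$ and $[F_{t},a]$ lie in $\sK$, and that $F_{t}$ is an $F_{Y}$-connection, are affine in $t$ and hold at the endpoints, hence throughout. The only genuine point is the Fredholm condition $a(1-F_{t}^{2})\in\sK(X\otimes_{B}Y)$ for all $t$ and all $a\in A$. Expanding $F_{t}^{2}=(1-t)^{2}G^{2}+t^{2}F^{2}+t(1-t)(GF+FG)$ and using $a(1-G^{2}),a(1-F^{2})\in\sK$, this reduces to the single claim
\[
    a\,\bl GF+FG-2\br\in\sK(X\otimes_{B}Y)\qquad\text{for all }a\in A,
\]
equivalently, since $D$ is self-adjoint modulo compacts, to $a^{*}D^{2}a\in\sK$.

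The verification of this last claim is, I expect, the main obstacle, and it is precisely where $\kappa<2$ enters: it does \emph{not} follow from $D$ being merely a $0$-connection. Here one combines the positivity estimates $a^{*}[F_{1},F]a\ge-\kappa a^{*}a$ and $a^{*}[F_{1},G]a\ge 0$ — both modulo $A$-locally compact operators — with a further appeal to Kasparov's technical theorem, used to trivialise the behaviour of $D$ against $F_{1}$ off the diagonal; the constraint $\kappa<2$ keeps the resulting defect small enough that the whole path $F_{t}$ stays inside the space of Kasparov modules. Equivalently one may represent $\bl X\otimes_{B}Y,F\br-\bl X\otimes_{B}Y,G\br$ on $(X\otimes_{B}Y)^{\oplus 2}$ by $F\oplus(-G)$ and rotate this to a degenerate operator, the rotation being built from $F_{1}+F$, which the hypothesis $\kappa<2$ renders invertible modulo $A$-locally compact operators (the relevant lower bound carrying the factor $2-\kappa$). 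Either way the delicate input is the same positivity-plus-technical-theorem estimate; since Theorem \ref{thm.bddprd} is only quoted here we content ourselves with this outline and refer to \cite{ConSka} for the complete argument.
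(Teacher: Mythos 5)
The paper itself offers no proof of Theorem \ref{thm.bddprd}: the statement is quoted from \cite[Theorem A.5]{ConSka}, together with the remark that condition (2) (positivity only up to $-\kappa a^{*}a$ with $\kappa<2$, rather than $\geq 0$) is a weakening of the original hypothesis. So your decision to cite Connes--Skandalis and merely sketch the mechanism matches the paper's treatment, and the one point that genuinely needs to be made --- that the only use of positivity in the Connes--Skandalis argument is to make $F_X\otimes 1+F$ invertible modulo $A$-locally compact operators, via $(F_X\otimes 1+F)^2\equiv 2+[F_X\otimes 1,F]\geq 2-\kappa$ on the relevant corner, so that any $\kappa<2$ works --- is exactly the observation (due to Kucerovsky, \cf \cite[Definition 4]{Kuc:KKP}) that the paper relies on. Your final sentences capture this correctly.

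Two details of your sketch are inaccurate, however. First, $F_X\otimes 1$ is not a $0$-connection: a $0$-connection $G$ must satisfy $GT_x\in\sK(Y,X\otimes_BY)$ for all $x\in X$, whereas $(F_X\otimes 1)T_x=T_{F_Xx}$ and creation operators are not compact in general (take $X=B$, so that $T_1=\mathrm{id}_Y$); what is true is that the $F_Y$-connections form an affine space over the $0$-connections, and that $M^{1/2}(F_X\otimes 1)M^{1/2}$ \emph{becomes} a $0$-connection once the technical theorem has produced $M$. Second, the straight-line homotopy is not the route taken in \cite{ConSka}, and the compactness $a(GF+FG-2)\in\sK$ to which you reduce it is neither established there nor a consequence of the hypotheses. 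The identity
\[
   1-F_t^{2}=(1-t)(1-G^{2})+t(1-F^{2})+t(1-t)(F-G)^{2}
\]
does give, for free, $a(1-F_t^{2})a^{*}\geq 0$ modulo compacts for \emph{any} two connections $F,G$; since that relaxed inequality alone cannot suffice (otherwise condition (2) would be vacuous, and already the pair $(X\otimes_BY,0)$ satisfies it), the homotopy must be built differently --- through the technical-theorem interpolation and the rotation involving $F_X\otimes 1+F$ that you describe as your second alternative. That second alternative, where the lower bound $2-\kappa$ actually enters, is the correct reading of the Connes--Skandalis--Kucerovsky argument.
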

Notice that condition (ii) is weaker than what is stated in 
\cite[Theorem A.5]{ConSka} (see \cite[Definition 4]{Kuc:KKP} and 
\cite[Definition 18.4.1]{Bla:KTO2Ed}). This weakening will be of vital
importance for our main theorem.

In order to describe the external product in $KK$-theory in a constructive
way, Baaj-Julg introduced the following refinement of Kasparov modules.
\begin{dfn}[\cite{BaaJul:TBK}] Let $(A,B)$ be a pair of separable
$C^{*}$-algebras. An \emph{unbounded Kasparov module} for $(A,B)$ is a triple
$(\sA,E,D)$ where 
\begin{enumerate}
\item $E$ is a $\Z/2$-graded Hilbert $C^{*}$-module over $B$ together with a
      $*$-homomoprhism $A\to \sL(E)$;
\item $D:\sD(D)\to E$ is a self-adjoint regular operator such that 
      $a(D\pm i)\ii\in \sK(E)$ for all $a\in A$;
\item $\sA\subset A$ is a norm dense $*$-subalgebra such that 
      $a:\Dom{D}\to\Dom{D}$ and $[D,a]$ extends to an element in $\sL(E)$
      for all $a\in\sA$.
\end{enumerate}
\end{dfn}

A continuous function $\chi:\R\to [-1,1]$ is called a
\emph{normalizing function} if 
\[
    \chi(-x)=-\chi(x)\quad\textnormal{and}\quad \lim_{x\to\pm\infty}\chi(x)=\pm 1.
\]
If $(E,D)$ is an unbounded Kasparov module then $(E,\chi(D))$ is a Kasparov
module (\cite{BaaJul:TBK}) whose class does not depend on the choice of
$\chi$.  Notice that the difference of any two normalizing functions
$\chi_{1},\chi_{2}$ is an element of $C_0(\R)$, which is generated
by $(x\pm i)\ii $. Since $(D\pm i)\ii $ are locally compact, so is
$\chi_{1}(D)-\chi_{2}(D)$ and the two functions give homotopic Kasparov
modules, \cf \cite[Sec. 10.6]{HigRoe:AKH}.

\begin{theorem}  \label{thm.unbddprd}
Let $(\sA, X,S_{X})$ and $(\sB, Y,T_{Y})$ be unbounded Kasparov modules for
$(A,B)$ and $(B,C)$ respectively and let $E:=X\otimes_{B}Y$ and
$S:=S_{X}\otimes 1$. Suppose that $T:\domT\to E$ is an odd self-adjoint
regular operator such that
\begin{enumerate}
\item there is a dense $\sB$-submodule $\sX\subset\domS\subset X$ for which
the algebraic tensor product $\mathcal{X}\otimes^{\textnormal{alg}}_{\sB}\sD(T_{Y})$
is a core for $T$ and for all homogenous elements $x\in\sX$ and all $y\in \Dom{T_{Y}}$ 
the operator
\begin{align*}
      y &\mapsto \gamma(x)\otimes T_{Y}y-T(x\otimes y)
      %z\otimes y &\mapsto T_{Y}\inn{ x,z} y - \inn{ \gamma(x), T(z\otimes y)}
\end{align*}
defines an element of $\sL(Y,E)$ ;
\item for all $a\in\sA$ we have $a:\domT\to\domT$ and $[T,a]\in\sL(E)$;
\item $(S,T)$ is a weakly anticommuting pair.
\end{enumerate}
Then $(\sA,E,S+T)$ is an unbounded Kasparov module that represents the
Kasparov product of $(X,S_{X})$ and $(Y,T_{Y})$.
\end{theorem}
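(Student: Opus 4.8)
The plan is to establish the theorem in three stages: that $D:=S+T$ is self-adjoint and regular, that $(\sA,E,D)$ is an unbounded Kasparov module, and that its class is the Kasparov product, the last via the weakened Connes--Skandalis criterion of Theorem \ref{thm.bddprd} applied to a bounded transform of $D$. Stage one is immediate: since $(S,T)$ is weakly anticommuting, Theorem \ref{p.RA.5} (equivalently Theorem \ref{p.main.7}) gives that $D$ is self-adjoint and regular on $\domS\cap\domT$, and the graph-norm equivalence \eqref{eq.main.10} shows $\Dom{D}\subset\domS\cap\domT$ together with boundedness and adjointability of $S(i+D)\ii$ and $T(i+D)\ii$; moreover $\Dom{D^2}=\Dom{S^2}\cap\Dom{T^2}$ and $[S,T]_+$ is $(S+T)^2$-bounded with arbitrarily small relative bound (Theorem \ref{p.appl.1}).

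For the Kasparov-module axioms the algebra action is easy: for homogeneous $a\in\sA$ the operator $a=a\otimes 1$ preserves $\domS=\Dom{S_X\otimes 1}$ (because $a$ preserves $\Dom{S_X}$ with $[S_X,a]\in\sL(X)$) and preserves $\domT$ by hypothesis (2), hence preserves $\Dom{D}$, and the graded commutator $[D,a]=[S,a]+[T,a]=[S_X,a]\otimes 1+[T,a]$ lies in $\sL(E)$. The content is local compactness, $a(i+D)\ii\in\sK(E)$ for $a\in A$. I would factor $a(i+D)\ii=\bigl(a(i+S_X)\ii\otimes 1\bigr)\,(i+S)(i+D)\ii$, the second factor being bounded adjointable by the graph-norm equivalence and the first factor $B$-locally compact on $E$ since $a(i+S_X)\ii\in\sK(X)$; the passage from $B$-local compactness to honest compactness is the standard connection argument, using $\Dom{D}\subset\domT$, hypothesis (1), and the compactness of the resolvent of $T_Y$ (the resolvent of $T$ acting, modulo $A$-compact operators, in the $Y$-variable like $1\otimes(i+T_Y)\ii$), \cf \cite{Kuc:KKP,MR2016}. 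This gives that $(\sA,E,D)$ is an unbounded Kasparov module.

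Next, fix a normalizing function $\chi$ and set $F:=\chi(D)$, so $\chi(S)=\chi(S_X)\otimes 1=:F_X\otimes 1$ and $(E,F)$ is a Kasparov module; by Theorem \ref{thm.bddprd} its class is the product once conditions (i) and (ii) there are verified. Condition (i), compactness of $y\mapsto\gamma(x)\otimes\chi(T_Y)y-F(x\otimes y)$, follows from hypothesis (1) -- which is the same statement with $T$ in place of $F$ and $T_Y$ in place of $\chi(T_Y)$, at the unbounded level -- by the routine passage from an unbounded connection to its bounded transform, e.g., via $\chi(D)=\tfrac2\pi\int_0^\infty D(1+s^2+D^2)\ii\,ds$ together with a termwise estimate of the connection defect using $\Dom{D}\subset\domS$ and the core condition in hypothesis (1) (\cf \cite{Kuc:KKP,MR2016}).

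The crux is condition (ii): to produce $0\le\kappa<2$ with $a^*[F_X\otimes 1,F]a\ge-\kappa\,a^*a$ modulo $A$-locally compact operators. This is where weak anticommutativity is used essentially, and where the present result improves on Kucerovsky: his stronger inequality $\inn{(S+T)x,Sx}+\inn{Sx,(S+T)x}\ge-\kappa\dinn{x}$ may fail, because \eqref{eq.main.1} permits a $\dinn{Tx}$ term. My plan is: (a) following the pattern of Kucerovsky's lemma, reduce $[F_X\otimes 1,F]$, modulo $A$-locally compact operators, to the behaviour on the core $\sF(S,T)$ of $D$ of the form $x\mapsto\inn{(S+T)x,Sx}+\inn{Sx,(S+T)x}=2\dinn{Sx}+\inn{[S,T]_+x,x}$, using the functional calculus together with the resolvent approximants $A_\gl=S+T+\tfrac{TS}{\gl}$ and the uniform bounds of Lemma \ref{p.RA.2}; (b) insert \eqref{eq.main.1} through $\pm\inn{[S,T]_+x,x}\le K\ii\dinn{[S,T]_+x}+K\dinn{x}$ to get, for $x\in\sF(S,T)$, the lower bound $\inn{(S+T)x,Sx}+\inn{Sx,(S+T)x}\ge(2-C_1/K)\dinn{Sx}-(C_2/K)\dinn{Tx}-(K+C_0/K)\dinn{x}$; (c) take the free parameter $K$ large so that $C_1/K$ and $C_2/K$ become as small as desired, and absorb the residual $-(C_2/K)\dinn{Tx}$ in the bounded picture using $\|T(i+D)\ii\|<\infty$ from stage one -- on the relevant vectors this costs only an arbitrarily small multiple of the identity, so the resulting $\kappa$ can be forced strictly below $2$. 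I expect step (a), the careful translation of the bounded graded anticommutator $[F_X\otimes 1,F]$ into the unbounded form inequality with precise enough control of constants to guarantee $\kappa<2$, to be the main obstacle; the weakly-anticommuting estimate \eqref{eq.main.1} must be exploited so that the extra $\dinn{Tx}$-contribution -- absent in Kucerovsky's hypotheses -- does not spoil the bound. With (i) and (ii) in hand, Theorem \ref{thm.bddprd} yields that $(\sA,E,S+T)$ represents the Kasparov product of $(X,S_X)$ and $(Y,T_Y)$.
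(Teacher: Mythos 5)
Your overall architecture (sum theorem $\Rightarrow$ unbounded Kasparov module $\Rightarrow$ weakened Connes--Skandalis criterion) matches the paper, and stage one as well as condition (i) of Theorem \ref{thm.bddprd} are handled essentially as in the paper. But there are two gaps. The smaller one is local compactness: in your factorization $a(i+D)\ii=\bigl(a(i+S_X)\ii\otimes 1\bigr)(i+S)(i+D)\ii$ the first factor lies in $\sK(X)\otimes 1$, which is not contained in $\sK(E)$, and the second factor does not visibly contain a copy of $(T+\mu)\ii$ through which the connection hypothesis could act. The ``standard connection argument'' yields $a(S+\gl)\ii(T+\mu)\ii\in\sK(E)$, but passing from that to $a(S+T+\mu)\ii\in\sK(E)$ is exactly what requires the approximants $A_\gl=S+T+\frac{TS}{\gl}$: their inverses factor through both resolvents via $B_\gl\ii=\gl(S+\gl)\ii(T+\gl)\ii$ and \Eqref{reseq}, and the norm convergence $(A_\gl+\mu)\ii\to(S+T+\mu)\ii$ from Theorem \ref{p.RA.5} then transports compactness to the true resolvent.

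The serious gap is in your treatment of condition (ii). Your steps (b)--(c) bound $\inn{[S,T]x,x}$ from below by $-(C_1/K)\dinn{Sx}-(C_2/K)\dinn{Tx}-(K+C_0/K)\dinn{x}$ and propose to absorb the $S$- and $T$-terms ``in the bounded picture''. This cannot work: in any reduction of $[\chi(S),\chi(D)]$ via $\chi(D)=\frac{2}{\pi}\int_0^1 D(1+\mu^2D^2)\ii\,d\mu$ the relevant vectors are $x=(1+\mu^2D^2)\ii\xi$, and $\int_0^1\dinn{T(1+\mu^2D^2)\ii\xi}\,d\mu$ diverges (the integrand is of order $\mu^{-2}\dinn{\xi}$ for small $\mu$), so no fixed prefactor $C_2/K$ renders that contribution small, while enlarging $K$ inflates the coefficient of $\dinn{x}$ beyond any $\kappa<2$. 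What is needed---and what the paper supplies---is an estimate \emph{linear} rather than quadratic in $D$, namely $\inn{\ovl{[S,T]}x,x}\le C(\dinn{x}+\inn{|D|x,x})$ (Proposition \ref{estimates}), obtained by a Phragm\'en--Lindel\"of interpolation between $\ovl{[S,T]}(1+|D|)\ii$ and its adjoint; only then does the $\mu$-integral converge, to $\arctan(|D_-|)\le\pi/2$. Two further ingredients you omit are the Clifford-algebra rewriting of the integrand $R_\mu$ (Lemma \ref{p.KK.10}), which brings the commutator of the two bounded transforms into a form where this estimate applies after discarding manifestly nonnegative terms, and the rescaling $(S,T)\mapsto(\gl S,\gl T)$---an operator homotopy, hence harmless in $KK$---which is how the constant $C$, and thus $\kappa$, is forced below $2$ (indeed made arbitrarily small).
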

\begin{proof}
The fact that $(E,S+T)$ is an unbounded Kasparov module follows quite easily:
the sum $S+T$ is self-adjoint and regular by condition (iii), and has
bounded commutators with $\sA$ by condition (ii). By condition (i) and
\cite[Lemma 4.3]{MR2016} we have $a(S+\gl)\ii (T+\mu)\ii \in\sK(E)$ for
all $a\in \sA$. In particular
\[
    aB_\gl \ii =\frac a\gl \Sl \Tl\in\sK(E),
\]
with $B_\gl $ as in Lemma \ref{p.RA.1}. By \Eqref{reseq} we have 
 \[
     (A_\gl +\mu)\ii =B_\gl \ii -(\mu-\gl)B_\gl \ii (A_\gl +\mu)\ii ,
 \]
and it follows that $a(A_\gl +\mu)\ii \in\sK(E)$. By Theorem \ref{p.RA.5} 
\[
     a(S+T+\mu )\ii =\lim_{\gl\to \infty}a(A_\gl + \mu)\ii ,
 \]
is a norm limit and we conclude that $a(S+T+\mu )\ii \in\sK (E)$.
 
To show that $(E,S+T)$ represents the Kasparov product,
consider the normalizing functions
\[
   \chi(x):=\frac{2}{\pi}\arctan (x),\quad b(x):=x(1+x^2)^{-1/2}.
\]
We will prove that $\chi(D)$ satisfies the conditions of Theorem
\ref{thm.bddprd}.  By \cite[Proposition 14]{Kuc:KKP} the operators $b(D)$ and
$b(T_{Y})$ satisfy condition (i) of Theorem \ref{thm.bddprd}. Since
$\chi(D)-b(D)$ is $A$ locally compact on $E$ and $b(T_{Y})-\chi(T_{Y})$ is $B$
locally compact on $Y$, $\chi(D)$ and $\chi(T_{Y})$ satisfy condition (i) as
well.

The fact that  after a suitable homotopy, $\chi(D)$ and $\chi(S)$, satisfy
condition (ii) follows from Proposition \ref{localestimate} in Section
\ref{subsec.pos}.
\end{proof}
 
\begin{remark} 
Theorem \ref{thm.unbddprd} should be compared to \cite[Theorem 13]{Kuc:KKP}.
There, fewer assumptions are imposed on the form of the product operator, in
particular it need not arise as a sum.  The case where
\[
  \inn{[S,T]x,[S,T]x}  \leq C (\inn{ x,x }+ \inn{ Sx,Sx }),
\]
is covered by the latter result. This assumption was in place in
\cite{KaaLes:SFU, MR2016}.  However, as soon as there is a nontrivial relative
bound to $T$ as well, condition (iii) of \cite[Theorem 13]{Kuc:KKP} may not be
satisfied. An example of such a situation is given in \cite{BM2018}.
\end{remark}
 
\begin{remark}
The construction of operators $T$ satisfying hypotheses (i) and (ii) of
Theorem \ref{thm.unbddprd} is the subject of the of the papers
\cite{KaaLes:SFU, Mes2014, MR2016}. Indeed in \cite{MR2016} it was
shown that up to equivalence, such a $T$ can always be constructed. In
geometric situations, an operator $T$ with the required properties can often
be written down explicitly, see for example \cite{BMS2016, KvS2016} .
\end{remark}

\subsection{A form estimate for the absolute value of the sum}

We denote by $S$ and $T$ a weakly anti-commuting pair of operators on the
Hilbert $C^{*}$-module $E$, and by $D:=S+T$ their sum operator, which is
self-adjoint and regular. Our goal is to obtain a form estimate for the
anticommutator $[S,T]$ relative to the positive operator $|D|$ defined through
functional calculus.  As we wish to work on the domain of $D$ we consider the
extension $\ovl{[S,T]}$, as in the proof of Theorem \ref{p.appl.1}.

\begin{lemma}
For $0\leq \Re z\leq 1$ the operator
$P_{z}:=(1+|D|)^{-z}\ovl{[S,T]}(1+|D|)^{z-1}$ is bounded on $\Dom{S}\cap\Dom{T}$, extends to an adjointable operator 
and $\|P_{z}\|\leq \|P_0\|$.
\end{lemma}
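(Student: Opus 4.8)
The plan is to recognize $P_z$ as an analytic family of operators on the strip $0\le\Re z\le 1$ and apply a Phragm\'en--Lindel\"of / maximal modulus argument (the complex interpolation trick of Stein), showing that the operator norm $\|P_z\|$ is a subharmonic-type function on the strip, bounded on the two boundary lines $\Re z=0$ and $\Re z=1$ by $\|P_0\|$, and hence bounded by $\|P_0\|$ throughout. The first thing to establish is boundedness and adjointability on each boundary line. For $z=it$ purely imaginary, $(1+|D|)^{-it}$ and $(1+|D|)^{it-1}=(1+|D|)^{-1}(1+|D|)^{it}$ are adjointable with norm controlled by $1$ (bounded imaginary powers are free for self-adjoint regular operators by the continuous functional calculus, as noted in Section \ref{s.CBS}), so on the line $\Re z=0$ one has $P_{it}=(1+|D|)^{-it}\,\ovl{[S,T]}(1+|D|)^{-1}(1+|D|)^{it}$, and thus $\|P_{it}\|=\|\ovl{[S,T]}(1+|D|)^{-1}\|=\|P_0\|$. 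For the line $\Re z=1$ write $P_{1+it}=(1+|D|)^{-1}(1+|D|)^{-it}\ovl{[S,T]}(1+|D|)^{it}$ and take adjoints: $P_{1+it}^*=(1+|D|)^{-it}\,\ovl{[S,T]}^*(1+|D|)^{it}(1+|D|)^{-1}$. Since $\ovl{[S,T]}\big|_{\domS\cap\domT}$ is symmetric (Theorem \ref{p.main.7}, \Eqref{eq.main.1}), this matches $P_0^*$ up to unitary conjugation, so $\|P_{1+it}\|=\|P_0\|$ as well.

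The second ingredient is that $\ovl{[S,T]}$ is $D^2$-bounded with arbitrarily small relative bound — equivalently $|D|$-bounded — which is precisely the content established in the proof of Theorem \ref{p.appl.1}: $\|\ovl{[S,T]}x\|\le C(\|x\|+\|(S+T)x\|)$ for $x\in\domS\cap\domT$, and a spectral-calculus interpolation then gives $\|\ovl{[S,T]}x\|\le C_\eps\|x\|+\eps\||D|^2 x\|$, hence $\|\ovl{[S,T]}(1+|D|)^{-1}\|<\infty$, i.e. $P_0=P_1$ (up to the obvious rewriting) is bounded, so $\|P_0\|<\infty$. This makes the target quantity finite and gives the needed a priori bound to run the interpolation. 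One must also check that $P_z$ genuinely maps $\domS\cap\domT$ into $E$ and extends to an adjointable operator for each fixed $z$ in the strip: $(1+|D|)^{z-1}$ maps $E$ into $\Dom{|D|^{1-\Re z}}\supset\Dom{|D|}=\Dom{D}=\domS\cap\domT$ (the last equality by Theorem \ref{p.main.7}\,\mainclosedness) only when $\Re z\le 0$; for general $z$ in the strip one should instead first verify the identity $P_z = (1+|D|)^{-z}P_0(1+|D|)^{z}$ holds on a suitable core and note the right-hand side is manifestly adjointable.

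The core analytic step is then: for $x,y$ in a fixed dense submodule, the function $z\mapsto \inn{P_z x, y}$ is holomorphic and bounded on the open strip, continuous up to the closure, and of admissible growth; applying the three-lines lemma (in the Hilbert-$C^*$-module setting one works with the scalar function $z\mapsto\|\inn{P_z x,y}\|_B$ after reducing to states on $B$, or more simply uses that $\sup_z\|P_z\|$ is dominated by the boundary sup via the operator-valued Phragm\'en--Lindel\"of principle) yields $\|\inn{P_z x,y}\|\le \|P_0\|\cdot\|x\|\cdot\|y\|$, hence $\|P_z\|\le\|P_0\|$.

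The main obstacle I anticipate is the bookkeeping around domains and the genuine holomorphy/boundedness of the operator family inside the strip — specifically, justifying that $z\mapsto P_z$ is analytic with values in $\sL(E)$ (not merely strongly), and that it does not blow up in the interior before the maximum principle is invoked. This is handled cleanly by the factorization $P_z=(1+|D|)^{-z}\,P_0\,(1+|D|)^{z}$: once $P_0\in\sL(E)$ is known, $(1+|D|)^{\pm z}$ restricted to the imaginary-axis-translated powers are uniformly bounded on the strip (norms $\le(1+|D|)^{\max(0,\mp\Re z)}$-type estimates are trivial from functional calculus since $\|(1+|D|)^{-z}\|\le 1$ for $\Re z\ge 0$), so the family is visibly analytic and uniformly bounded on the strip, and the three-lines lemma applies directly. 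The rest is the routine verification of the boundary norms computed above.
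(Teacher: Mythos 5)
Your overall strategy coincides with the paper's: identify $P_0=\ovl{[S,T]}(1+|D|)^{-1}$ as bounded via the relative bound $\|\ovl{[S,T]}x\|\le C(\|x\|+\|Dx\|)$, compute the norms on the two boundary lines using that the imaginary powers $(1+|D|)^{it}$ are unitary and that $P_1=P_0^{*}$, and then interpolate by reducing to states on $B$ and applying the three-lines (Phragm\'en--Lindel\"of) theorem to the scalar functions $z\mapsto\varphi(\inn{P_z x,y})$. The boundary computations are correct.

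There is, however, a genuine gap in the step you yourself single out as the main obstacle. The factorization $P_z=(1+|D|)^{-z}P_0(1+|D|)^{z}$ does \emph{not} make the family ``visibly analytic and uniformly bounded on the strip'': the rightmost factor $(1+|D|)^{z}$ is an \emph{unbounded} operator whenever $\Re z>0$ (only the powers with nonpositive real part have norm $\le 1$), so nothing about boundedness of the product can be read off from it. Uniform boundedness of $P_z$ in the interior is exactly what the interpolation is meant to deliver; assuming it at this stage is circular, and without some a priori growth control in the interior the maximum principle on a strip simply fails. The correct substitute, which is what the paper uses, is elementary: for $x\in\Dom{S}\cap\Dom{T}=\Dom{D}$ and $0\le\Re z\le 1$ one has $(1+|D|)^{z-1}x\in\Dom{|D|^{2-\Re z}}\subset\Dom{|D|}=\Dom{\ovl{[S,T]}}$, so $P_z x$ is defined, and $\|(1+|D|)^{-z}\|\le 1$ together with the relative bound yields the \emph{graph-norm} estimate $\|P_z x\|\le C\bl\|x\|+\|Dx\|\br$ uniformly in $z$ on the closed strip. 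This pointwise bound (for fixed $x\in\Dom{D}$, $y\in E$) is all that is needed to make each scalar function $z\mapsto\varphi(\inn{P_z x,y})$ bounded and holomorphic, and the operator-norm bound $\|P_z\|\le\|P_0\|$ is then the \emph{conclusion} of the three-lines argument, not an input. (Relatedly, your domain remark is aimed in the wrong direction: the issue is not whether $\Dom{|D|^{1-\Re z}}$ contains $\Dom{|D|}$ but whether the image of $(1+|D|)^{z-1}$ lands in $\Dom{\ovl{[S,T]}}$; restricting to $x\in\Dom{S}\cap\Dom{T}$, as the statement of the lemma already does, resolves this without any detour through the problematic factorization. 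Adjointability of the extension then follows since the densely defined candidate adjoint $P_{1-\bar z}$ is bounded by the same estimate.)
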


\begin{proof} %\mpar{!!}
The operator $1+|D|:\Dom{D}\to E$ is boundedly invertible and by
\Eqref{eq.appl.1} the operator $\ovl{[S,T]}:\Dom D \to E$ is bounded,
when $\Dom D$ is equipped with the graph norm of $D$. Hence 
\[
    P_0:=\ovl{[S,T]}(1+|D|)\ii :E\to E
\]
is bounded and consequently the densely defined operator 
\[
 (1+|D|)\ii \ovl{[S,T]}:\sD(S)\cap\sD(T)\to E
\] 
is bounded as well and its closure $P_1$ equals the adjoint of $P_0$.
 
We now adapt the interpolation argument of \cite[Appendix A]{Les:USF} to the
case of Hilbert $C^{*}$-modules. For $\Re z>0$ the operators $(1+|D|)^{-z}$
preserve $\Dom{D}$. For $x,y\in \Dom{D}$ the function
\[
     f_{x,y}: z\mapsto \inn{ (1+|D|)^{-z}\ovl{[S,T]}(1+|D|)^{-1+z}x,y } = \inn{ P_{z}x,y}, 
 \]
is weakly holomorphic on the strip $0\leq \Re z\leq 1$. Since
\begin{align*}
   \dinn{ P_{z}x } &\leq \dinn{ x }+ \dinn{ D(1+|D|)^{-1+z}x }\\
        &\leq \dinn{ x } +\dinn{ Dx },
\end{align*} 
we infer that $\|P_{z}x\|\leq \|x\|+\|Dx\|$ and
\[
      \|f_{x,y}(z)\|\leq \|P_{z}x\|\|y\|\leq (\|x\|+\|Dx\|)\|y\|
\] 
so $f_{x,y}$ is a bounded function. Now let $\varphi:B\to\C$ be a
state. The function $z\mapsto \varphi\circ f_{x,y}(z)$ is bounded and
holomorphic in the strip $0\leq \Re z\leq 1$. By the Phragm{\'e}n-Lindel\"of
Theorem (aka Hadamard 3-line Theorem in this case) the function is bounded
by its suprema on the boundary $\Re z\in\{0,1\}$. For such $z$ it holds that
$\|P_{z}\|=\|P_0\|=\|P_1= P_0^*\|$. So we obtain that
\begin{align*}
    |\varphi(\inn{ P_{z}x,y })| 
        & \leq \sup_{\Re w\in \{0,1\}} |\varphi  (\inn{ P_w x,y } )|\\
        & \leq \sup_{\Re w\in \{0,1\}} \| \inn{ P_w x,y } \|  \leq \|P_0\| \| x \| \| y\|.
\end{align*}
Since this holds for all states $\varphi$ it follows that 
$\| \inn {P_z x,y } \| \leq \| P_0 \| \| x \| \| y\|$ 
and hence $\|P_z\| \le \|P_0\|$.%\mpar{!!}
\end{proof}

By a \emph{rescaling} of a weakly anticommuting pair $(S,T)$ we mean a weakly
anticommuting pair of the form $(\gl S, \gl T)$ for some $\gl >0$.

\begin{prop} \label{estimates}
Let $(S,T)$ be a weakly anticommuting pair and $D=S+T$ their sum. For all
$x\in\domS\cap\domT$ the form estimate
\begin{align} \label{eq.KK.2}
\inn{ \ovl{[S,T]}x,x} \leq  C(\inn{ x,x} + \inn{ |D|x,x}),
\end{align}
holds true, with $C$ a constant independent of $x$. Consequently, for all
$\mu>0$ we have the operator estimate
\begin{align} \label{eq.KK.3}
\pm (1+\mu^2D^2)\ii [S,T](1+\mu^2D^2)\ii \leq C(1+|D|)(1+\mu^2D^2)^{-2}.
\end{align}
After a suitable rescaling of the pair $(S,T)$ we can achieve that
$C<\eps$ for any $\eps>0$.
\end{prop}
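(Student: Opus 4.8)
The plan is to deduce the form estimate \eqref{eq.KK.2} from the graph-norm equivalence \eqref{eq.appl.1} together with the interpolation lemma just proved, and then to obtain the operator estimate \eqref{eq.KK.3} by a bounded-transform argument. First I would note that by Theorem \ref{p.main.7}\,\mainclosedness\ and \eqref{eq.appl.1} the operator $\ovl{[S,T]}$ satisfies $\|\ovl{[S,T]}x\|\le C(\|x\|+\|Dx\|)$ on $\domS\cap\domT=\Dom{D}$. The difficulty is that \eqref{eq.KK.2} is a \emph{one-sided form} estimate against $|D|$ rather than a norm estimate against $D$, so one cannot simply quote \eqref{eq.appl.1}; the gain of half a power of $D$ is exactly what the preceding interpolation lemma provides. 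Writing $\inn{\ovl{[S,T]}x,x}=\inn{(1+|D|)^{1/2}P_{1/2}(1+|D|)^{1/2}x,x}+(\textnormal{l.o.t.})$, where $P_{1/2}=(1+|D|)^{-1/2}\ovl{[S,T]}(1+|D|)^{-1/2}$ is bounded and (by the lemma) adjointable, I would use $\inn{P_{1/2}u,u}\le\|P_{1/2}\|\inn{u,u}$ for $u=(1+|D|)^{1/2}x$; since $[S,T]$ is symmetric on $\sF(S,T)$ and $\ovl{[S,T]}$ is its closure, $P_{1/2}$ is self-adjoint, so this inner-product inequality is legitimate. This yields $\inn{\ovl{[S,T]}x,x}\le \|P_{1/2}\|\,\inn{(1+|D|)x,x}$, which is \eqref{eq.KK.2} with $C=\|P_{1/2}\|\le\|P_0\|$.

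For \eqref{eq.KK.3}, the idea is to substitute $x\mapsto (1+\mu^2D^2)^{-1}y$ into the form estimate \eqref{eq.KK.2}. Since $(1+\mu^2D^2)^{-1}$ commutes with $|D|$ (both being functions of $D$ via the continuous functional calculus) and maps $E$ into $\Dom{D}=\domS\cap\domT$, one gets
\[
    \inn{(1+\mu^2D^2)\ii \ovl{[S,T]}(1+\mu^2D^2)\ii y,\,y}
       \le C\,\inn{(1+|D|)(1+\mu^2D^2)^{-2}y,\,y}
\]
for all $y\in E$, and the analogous inequality with $-[S,T]$ follows by applying the estimate to the pair $(-S,T)$ (which is again weakly anticommuting with the same $\sF$, and whose anticommutator is $-[S,T]$), or equivalently by noting $\pm\inn{\ovl{[S,T]}x,x}\le C(\inn{x,x}+\inn{|D|x,x})$ directly from $\pm P_{1/2}\le\|P_{1/2}\|$. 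This is precisely \eqref{eq.KK.3} as an operator inequality in $\sL(E)$. One should take a small care that on the right-hand side $(1+|D|)(1+\mu^2D^2)^{-2}$ is a bounded positive operator for each fixed $\mu>0$, which it is, so both sides genuinely lie in $\sL(E)$.

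The rescaling claim is the easiest part: replacing $(S,T)$ by $(\gl S,\gl T)$ replaces $[S,T]$ by $\gl^2[S,T]$, replaces $D$ by $\gl D$, and in the estimate \eqref{eq.appl.1} applied to $\gl S,\gl T$ the constants scale as $C_0,\ \gl^2 C_1,\ \gl^2 C_2$ on the right but the left side carries a factor $\gl^4$; unwinding, the constant $\|P_0\|$ for the rescaled pair becomes $O(\gl)$ as $\gl\to 0$ (the $(1+|\gl D|)\ii$ normalization kills one power of $\gl$ from the $\gl^2$ in $\gl^2\ovl{[S,T]}$). Hence choosing $\gl$ small enough forces $C<\eps$. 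The main obstacle in the whole argument is the first paragraph: justifying that $P_{1/2}$ is adjointable and self-adjoint so that the $C^*$-module inequality $\inn{P_{1/2}u,u}\le\|P_{1/2}\|\inn{u,u}$ may be used — this is exactly what the Phragmén–Lindelöf/three-lines interpolation in the preceding lemma secures, and it is the only place where the $C^*$-module (as opposed to Hilbert-space) setting requires genuine care, since in a $C^*$-module $R\le\|R\|\cdot 1$ holds for \emph{self-adjoint} $R\in\sL(E)$ but the naive bound $\inn{Rx,x}\le\|R\|\inn{x,x}$ for general adjointable $R$ is false.
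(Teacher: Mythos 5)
Your proof is correct and follows essentially the same route as the paper: the form estimate comes from the self-adjointness of $P_{1/2}$ and the bound $\|P_{1/2}\|\le\|P_0\|$ applied to $\inn{[S,T]x,x}=\inn{P_{1/2}u,u}$ with $u=(1+|D|)^{1/2}x$, the operator estimate by substituting $x=(1+\mu^2D^2)\ii y$, and the rescaling by tracking the factor $\gl^2$ against the $(1+\gl|D|)\ii$ normalization (the paper rescales the form estimate directly, which is an equivalent computation). The only misstep is the aside that the $-$ sign could be obtained by passing to the pair $(-S,T)$ --- that would replace $|D|$ by $|S-T|$ on the right-hand side --- but this is harmless since you also give the correct argument, namely $\pm P_{1/2}\le\|P_{1/2}\|$ for the self-adjoint element $P_{1/2}\in\sL(E)$.
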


\begin{proof}
The operator $P_{1/2}$ is self-adjoint whence
\begin{align*}
\inn{ [S,T]x,x} &=\dinn{ P_{1/2}(1+|D|)^{1/2}x } \\
                &\leq \|P_0\| \dinn{(1+|D|)^{1/2}x}\\
             &=\|P_0\|(\dinn{ x} + \inn{ |D|x,x}),
\end{align*}
which proves the form estimate \eqref{eq.KK.2} with $C=\|P_0\|$.
The operator estimate \eqref{eq.KK.3} now follows in a
straightforward manner.  Replacing $S,T$ by $\gl S,\gl T$ for $0<\gl <1$ we
obtain
\[
    \inn{ [\gl S,\gl T]x,x }  = \gl^2 \inn{ [S,T]x,x} 
          \leq  \gl^2 C(\inn{ x,x} + \inn{ |D|x,x})
          \leq  \gl C  (\inn{ x,x} +\inn{ |\gl D|x,x}.
\]
Thus, by taking $\gl$ sufficiently small we may assume that $C$ is as small as
we like.
\end{proof}

\subsection{Proof of the positivity condition}\label{subsec.pos}

We use the integral representation of the function $\arctan=\tan\ii$
\[
   \arctan(x) = \int_0^x \frac 1{1+t^2} dt = \int_0^1 \frac x{1+\mu^2 x^2} d\mu.
\] 
For any self-adjoint regular operator $D$, the bounded adjointable operator
$\chi(D):=\frac{2}{\pi}\arctan (D)$ then has the representation
\[
\chi(D)=\frac{2}{\pi}\int_0^{1}D(1+\mu^2D^2)\ii d\mu,
\]
as a \emph{strongly} convergent integral (\cf \cite[Lemma 8]{Kuc:KKP}).

We now consider a weakly anticommuting pair of operators $(S,T)$ in a Hilbert
$C^{*}$-module $E$. Recall from Section \ref{ss.Clifford} that the
Clifford algebra $\Cltwo$ is represented unitarily on $E\oplus E$ and that the
$\Cltwo$ action commutes with $\hat S, \hat T$ (\eqref{Shat}, \eqref{That})
and that the action preserves their domains. Denote by $\omega:=\sigma_3= i
\sigma_1\sigma_2$ the volume element of $\Cltwo$ and we let $\hat D_\pm:=\hat
S\pm\go\hat T$ and $\hat D:=\hat D_+$, \cf \Eqref{eq.appl.2}--\eqref{eq.appl.4}.
Since $\go$ commutes with $\hat S,\hat T$
we have that the pair $(\hat S,\pm\go \hat T)$
is weakly anticommuting as well and that $\hat S\pm\go \hat T$ is self-adjoint and
regular with domain $\Dom{\hat S}\cap\Dom{\hat T}$. Recall also that
\[
\hat S+\go\hat T =\begin{pmatrix} S+T & 0 \\ 
                           0 & S-T \end{pmatrix}.
\]

So for the time being we may w.l.o.g. omit the hat
decorator and assume that $S,T$ are $\Cltwo$ invariant. 
         
\begin{lemma} For $\mu>0$ the operator
\begin{align}\nonumber 
    K_{\mu} &:=(1+\mu^2D_-^2)\ii -(1+\mu^2D_+^2)\ii \\
\label{eq.KK.4}  &=2 (1+\mu^2D_+^2)\ii \mu^2[S,T](1+\mu^2D_-^2)\ii \\
\label{eq.KK.5}  &=2 (1+\mu^2D_-^2)\ii \mu^2[S,T](1+\mu^2D_+^2)\ii 
\end{align}
is $A$ locally compact, as are the operators $D_{\pm}K_{\mu}$. Moreover
\[
    \sup\{\|K_{\mu}\|,\|D_{\pm}K_{\mu}\|:\mu\in (0,\infty)\}<\infty,
\]
and thus integrate to $A$-locally compact operators over any finite interval $(0,x]$.
\end{lemma}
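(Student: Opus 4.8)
The plan is to prove, in this order: the identities \eqref{eq.KK.4}--\eqref{eq.KK.5}, the uniform norm bounds, $A$-local compactness of $K_\mu$ and $D_\pm K_\mu$, and finally integrability. Since $\go=\sigma_3$ commutes with $S$ and $T$ and $\go^2=1$, one has $D_\pm^2=(S\pm\go T)^2=S^2+T^2\pm\go[S,T]$, hence $D_+^2-D_-^2=2\go[S,T]$; as $\go$ then commutes with each $(1+\mu^2D_\pm^2)\ii$ as well, the second resolvent identity applied to $1+\mu^2D_+^2$ and $1+\mu^2D_-^2$ gives \eqref{eq.KK.4}, and the same computation with $D_+$ and $D_-$ interchanged gives \eqref{eq.KK.5}. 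For $K_\mu$ the norm bound is trivial: each $(1+\mu^2D_\pm^2)\ii$ has norm $\le1$, so $\|K_\mu\|\le2$.

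For $D_\pm K_\mu$ I would combine two estimates whose minimum is bounded uniformly in $\mu>0$. The first uses \eqref{eq.KK.4}: writing $D_+K_\mu=2\bigl(\mu D_+(1+\mu^2D_+^2)\ii\bigr)\bigl(\mu[S,T](1+\mu^2D_-^2)\ii\bigr)$, the first factor has norm $\le\tfrac12$ by functional calculus, and for the second I factor $\mu[S,T](1+\mu^2D_-^2)\ii=\bigl(\ovl{[S,T]}(1+|D_-|)\ii\bigr)\bigl(\mu(1+|D_-|)(1+\mu^2D_-^2)\ii\bigr)$, where the first factor is bounded with $\mu$-independent norm by \eqref{eq.appl.1} for the weakly anticommuting pair $(S,-\go T)$ together with the graph-norm equivalence in Theorem~\ref{p.main.7}(1), and $\mu\sup_{s\ge0}(1+s)(1+\mu^2s^2)\ii$ stays bounded for $\mu\le1$. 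The second estimate handles large $\mu$: $\|D_+K_\mu\|\le\|D_+(1+\mu^2D_-^2)\ii\|+\|D_+(1+\mu^2D_+^2)\ii\|$, where $\|D_+(1+\mu^2D_+^2)\ii\|=\tfrac1{2\mu}$ and, using $D_+=D_-+2\go T$, $\|D_+(1+\mu^2D_-^2)\ii\|\le\tfrac1{2\mu}+2\|T(1+\mu^2D_-^2)\ii\|$, the last term being bounded for $\mu\ge1$ by the relative bound $\|Tx\|\le C(\|x\|+\|D_-x\|)$ contained in Theorem~\ref{p.main.7}(1). Taking the minimum gives $\sup_{\mu>0}\|D_+K_\mu\|<\infty$; the estimate for $D_-K_\mu$ is identical, using \eqref{eq.KK.5} and $D_-=D_+-2\go T$.

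For local compactness, since $\sA$ is norm dense in $A$ and the $A$-locally compact operators form a norm-closed subspace, it suffices to treat $a\in\sA$. The key input is $a(D_\pm+\nu)\ii\in\sK(E)$ for $\nu\in\C\setminus\R$: the pair $(S,\pm\go T)$ is again weakly anticommuting, so $D_\pm$ is self-adjoint and regular, and the argument in the proof of Theorem~\ref{thm.unbddprd} that $a(S+T+\mu)\ii\in\sK(E)$ carries over once one checks $a\bigl(\gl\ii(\pm\go T+\gl)(S+\gl)\bigr)\ii\in\sK(E)$, which follows from $a(S+\gl)\ii(T+\mu)\ii\in\sK(E)$ by splitting $(\pm\go T+\gl)\ii$ along the two spectral projections of $\go$ (which commute with $S$ and $T$). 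A standard argument then upgrades this to $af(D_\pm)\in\sK(E)$, and, since $\sA$ is a $*$-algebra, likewise $f(D_\pm)a\in\sK(E)$, for all $f\in C_0(\R)$. Now \eqref{eq.KK.4}--\eqref{eq.KK.5} display $K_\mu$ and $D_\pm K_\mu$ as products of functions of $D_+$ and $D_-$ lying in $C_0(\R)$ --- such as $(1+\mu^2t^2)\ii$, $t(1+\mu^2t^2)\ii$ and $(1+|t|)(1+\mu^2t^2)\ii$ --- together with the one genuinely unbounded factor $\ovl{[S,T]}$, which is absorbed via $\ovl{[S,T]}(1+\mu^2D_\mp^2)\ii=\bigl(\ovl{[S,T]}(1+|D_\mp|)\ii\bigr)\bigl((1+|D_\mp|)(1+\mu^2D_\mp^2)\ii\bigr)$, the first of these being bounded by \eqref{eq.appl.1} and Theorem~\ref{p.main.7}(1). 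Placing the $C_0$-functional-calculus factor adjacent to $a$ then gives $aK_\mu,aD_\pm K_\mu\in\sK(E)$, and symmetrically on the right.

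Finally, $\mu\mapsto K_\mu$ and $\mu\mapsto D_\pm K_\mu$ are norm-continuous on $(0,\infty)$ (elementary functional-calculus estimates, with the relative bound for $T$ handling the cross terms), so the uniform bounds make $\int_0^x K_\mu\,d\mu$ and $\int_0^x D_\pm K_\mu\,d\mu$ norm-convergent, hence $A$-locally compact by norm-closedness. The step I expect to be the real obstacle is the uniform-in-$\mu$ bound on $D_\pm K_\mu$: estimating the two summands of $K_\mu$ separately brings in a factor $(2\mu)\ii$ that diverges as $\mu\to0$, so the cancellation encoded in \eqref{eq.KK.4}--\eqref{eq.KK.5} is indispensable there, whereas that same factorized estimate loses too much as $\mu\to\infty$ and has to be replaced by the crude triangle inequality together with the $T$-versus-$D_\mp$ graph comparison; making the two regimes overlap, and verifying that the mixed operators $T(1+\mu^2D_\pm^2)\ii$ are genuinely bounded, is where the work lies.
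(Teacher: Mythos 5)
Your proof is correct and follows essentially the same route as the paper's (very terse) argument: the identities come from the second resolvent identity applied to $D_\pm^2=S^2+T^2\pm\go\,[S,T]$ on $\Dom{D_+^2}=\Dom{D_-^2}=\Dom{S^2}\cap\Dom{T^2}\subset\sF(S,T)$ (Theorem \ref{p.appl.1}), and both the uniform bound and the local compactness are extracted from factoring $D_\pm K_\mu$ through $\mu D_\pm(1+\mu^2D_\pm^2)\ii$ and $\mu\,\ovl{[S,T]}(1+\mu^2D_\mp^2)\ii$, placing the $C_0$-functional-calculus factor next to $a$. Two remarks. First, your computation actually produces $K_\mu=2(1+\mu^2D_+^2)\ii\mu^2\go\,[S,T](1+\mu^2D_-^2)\ii$; the factor $\go$ is absent from the displayed formula (a harmless discrepancy, since $\go$ is unitary and commutes with all operators involved, so none of the boundedness or compactness conclusions are affected), but you should flag that you are absorbing it rather than asserting your identity literally ``gives'' \eqref{eq.KK.4}. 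Second, your two-regime treatment of $\sup_{\mu>0}\|D_\pm K_\mu\|$ is a genuine and necessary refinement: the paper justifies uniformity only by pointing to ``the presence of the factor $\mu^2$'', which, as you correctly observe, controls the factorized expression only for bounded $\mu$ (the bound $\|\mu\,\ovl{[S,T]}(1+\mu^2D_-^2)\ii\|\le C\mu+C/2$ degenerates as $\mu\to\infty$), while the large-$\mu$ regime needs the crude triangle inequality together with the relative bound $\|Tx\|\le C(\|x\|+\|D_\mp x\|)$ from Theorem \ref{p.main.7}\,(1) applied to the pair $(S,\mp\go T)$. (In the application only $\mu\in(0,1]$ is ever integrated over, so the paper loses nothing, but your argument is what is required to prove the lemma as stated.) The remaining steps --- $\|K_\mu\|\le2$, deriving local compactness of the resolvents of $D_\pm$ from the hypotheses of Theorem \ref{thm.unbddprd}, and norm-continuity plus uniform boundedness for the integrals --- are all sound.
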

\begin{proof}
Since $\Dom{D_{\pm}^2}=\Dom{S^2}\cap \Dom{T^2}\subset \sF(S,T)$ by
Theorem \ref{p.appl.1}, formula \eqref{eq.KK.4} follows by direct
calculation and \eqref{eq.KK.5} by taking adjoints.  Using
\eqref{eq.KK.4} for $D_+$ and \eqref{eq.KK.5} for $D_-$ it
follows that $D_{\pm}K_{\mu}$ is locally compact. Because of the presence of
the factor $\mu^2$ in this equation, multiplication by $D_{\pm}$ still
yields a family of operators that is uniformly bounded in $\mu$.
\end{proof}

From now on we write $D$ for $D_+$.
Consider the bounded adjointable operators $\chi(D)$ and $\chi(S)$
\begin{align}
  \nonumber       \frac 4{\pi^2} &[\chi(D),\chi(S)]=\\ 
  \label{eq.KK.6}      & \int_0^{1}\int_0^{1} (1+\gl^2S^2)\ii SD(1+\mu^2D^2)\ii 
                                                     +(1+\mu^2D^2)\ii DS (1+\gl^2S^2)\ii d\gl d\mu.
         \end{align}
         
We will show that, for any $\kappa>0$,  a suitable rescaling of the operators $D$ and $S$ gives that 
$[\chi(D),\chi(S)]\geq -\kappa,$ modulo $A$-locally compact operators. We therefore discard the 
multiplicative factor $\frac{4}{\pi^2}$.
We apply the identity 
\[
(1+\gl^2S^2)(1+\gl^2S^2)\ii =(1+\gl^2S^2)\ii (1+\gl^2S^2)=1,
\] 
and multiply the first summand of \eqref{eq.KK.6} from the right and second
summand from the left. The integrand can thus be written as the sum of the
operator
\begin{equation}
\label{eq.KK.7} (1+\gl^2S^2)\ii S D (1+\mu^2D^2)\ii (1+\gl^2S^2)(1+\gl^2S^2)\ii,
\end{equation}
and its adjoint. Expanding $D=S+\go T$ in
\[
  S D (1+\mu^2D^2)\ii (1+\gl^2S^2) = D (1+\mu^2D^2)\ii \gl^2 S^2
           + S (1+\mu^2D^2)\ii D 
\]
gives us a sum of four terms
\begin{align}
\label{eq.KK.8} & S^2(1+\mu^2D^2_+)\ii \gl^2S^2 + S(1+\mu^2D^2)\ii S \\
\label{eq.KK.9} &\quad + S(1+\mu^2D^2)\ii \go T + \gl S\cdot \go T(1+\mu^2D^2_+)\ii S\cdot \gl S.
\end{align}
The summands \eqref{eq.KK.8} are nonnegative and can thus be
discarded.  By adding the adjoints of \eqref{eq.KK.9} and multiply by 
$(1+\gl^2S^2)\ii$ from the left and from the right (\cf \eqref{eq.KK.7}),
we need to address the integral of the sum of operators
\[
      P_\gl\cdot R_\mu\cdot P_\gl + Q_\gl\cdot R_\mu\cdot Q_\gl,
\]
where
\begin{align}
\label{eq.KK.10} R_{\mu}&:= \go T (1+\mu^2D^2)\ii S+S(1+\mu^2D^2)\ii \go T  \\
\label{eq.KK.11} P_\gl &:=(1+\gl^2S^2)\ii ,\quad Q_\gl :=\gl S(1+\gl^2S^2)\ii ,
\end{align}
so that up to positive operators \eqref{eq.KK.6} can be written
\begin{equation}
\label{eq.KK.12}
\int_0^1 P_\gl \left(\int_0^1 R_{\mu} d\mu\right) P_\gl  d\gl 
     +\int_0^1 Q_\gl \left(\int_0^1 R_{\mu} d\mu\right) Q_\gl  d\gl.
\end{equation}
We will prove that  for any $\eps>0$ there is a rescaling of the pair
$(S,T)$ such that the integral $\int_0^1R_{\mu}d\mu \geq -\eps$ modulo
$A$-locally compact operators. By \cite[Lemma 4.3]{MR2016} $P_{\gl}KP_{\gl}$ and $Q_{\gl} KQ_{\gl}$ are $A$-locally compact whenever $K$ is. 
Since $\|P_\gl \|\leq 1$ and $\|Q_\gl \|\leq 1$ 
this allows us to estimate \eqref{eq.KK.12} 
from below as well. Note that since we are integrating over $[0,1]$,
perturbing the integrand by a function $f(\mu)$ with values in the
$A$-locally compact operators that is \emph{uniformly bounded in $\mu$} yields
an $A$-locally compact perturbation after integration. 

Our first goal is to find another expression for $R_{\mu}$. We first consider the algebraic identity 
 \Eqref{eq.A.5} and show that it holds with $a=\sigma_{2} S$ and $b=D=S+\omega T$.

\begin{lemma} The self-adjoint regular operators $S$, $T$ and $D_{\pm}=S\pm\omega T$ satisfy the identities
\begin{align}
\label{eq.A.5.2S}
[(1+D^{2})^{-1},\sigma_{j} S]_{-}&= (1+D^{2})^{-1}[\sigma_j S,D]_{-}D(1+D^{2})^{-1}\\
\nonumber
& \qquad\qquad\qquad+D(1+D^{2})^{-1}[ \sigma_jS, D]_{-}(1+D^{2})^{-1},\\
\label{eq.A.5.2T}
[(1+D^{2})^{-1},\sigma_{j} T]_{-}&=  (1+D^{2})^{-1}[\sigma_j T, D]_{+} D(1+D^{2})^{-1}\\
\nonumber
&\qquad\qquad\qquad-D(1+D^{2})^{-1}[ \sigma_j T, D]_{+}(1+D^{2})^{-1} ,
\end{align}
on $\Dom{S}\cap \Dom{T}$ for $j=1,2$.
\end{lemma}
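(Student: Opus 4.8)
The plan is to obtain both identities as instances of the general algebraic commutator identity \Eqref{eq.A.5} of the Appendix, applied with $b=D=S+\omega T$ and $a=\sigma_j S$, using the ungraded commutator $[\cdot,\cdot]_-$, for the first identity, and with $a=\sigma_j T$, using the anticommutator $[\cdot,\cdot]_+$, for the second. Recall that for suitable $a,b$ the identity \Eqref{eq.A.5} expresses $[(1+b^2)^{-1},a]_-$ as $(1+b^2)^{-1}[a,b]_\tau\,b\,(1+b^2)^{-1}$ together with $b(1+b^2)^{-1}[a,b]_\tau(1+b^2)^{-1}$, the two summands being added when $\tau=-$ and subtracted when $\tau=+$; thus $\tau=-$, $a=\sigma_j S$ yields \eqref{eq.A.5.2S} and $\tau=+$, $a=\sigma_j T$ yields \eqref{eq.A.5.2T}. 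The rationale for these particular choices is that $\sigma_1,\sigma_2$ commute with $S$ and $T$ but anticommute with $\omega=\sigma_3$, so a one-line computation gives $[\sigma_j S,D]_-=\sigma_j\omega[S,T]$ and $[\sigma_j T,D]_+=\sigma_j[S,T]$: exactly these choices turn the right-hand sides into expressions in the anticommutator $[S,T]$, the quantity controlled by weak anticommutativity. That simplification is used later; for the present lemma only the displayed identities are asserted.

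The substance of the proof is verifying that the domain hypotheses of \Eqref{eq.A.5} hold for these $a$ and $b$. The crucial input is Theorem~\ref{p.appl.1}, applied to the pair $(S,\omega T)$: this pair is again weakly anticommuting, since $\omega$ is a self-adjoint unitary commuting with both $S$ and $T$, so that $(\omega T)^2=T^2$, $\Dom{\omega T}=\domT$, $\sF(S,\omega T)=\sF(S,T)$, and the form estimate \Eqref{eq.main.1} transfers verbatim (because $[S,\omega T]_+=\omega[S,T]$ and $\omega$ is a unitary). Hence Theorem~\ref{p.appl.1} gives the domain \emph{equality} $\Dom{D^2}=\Dom{S^2}\cap\Dom{T^2}\subset\sF(S,T)$. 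Consequently $(1+D^2)^{-1}$ maps $E$ onto $\Dom{D^2}$, a submodule on which $S^2$, $T^2$, $ST$ and $TS$ are all everywhere defined, on which $D$ acts with values in $\domS\cap\domT$, and on which therefore $[\sigma_j S,D]_-$ and $[\sigma_j T,D]_+$ are everywhere defined. Using this, together with $[D,(1+D^2)^{-1}]_-=0$, one checks that every operator occurring in \eqref{eq.A.5.2S} and \eqref{eq.A.5.2T} is a well-defined module map $\domS\cap\domT\to E$, and that the precise hypotheses of \Eqref{eq.A.5} are met in both cases.

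Granting that, the identities follow routinely: on $\Dom{D^2}$ one has $[(1+D^2)^{-1},a]_-=(1+D^2)^{-1}[a,D^2]_-(1+D^2)^{-1}$, and substituting the Leibniz expansion $[a,D^2]_-=[a,D]_-D+D[a,D]_-=[a,D]_+D-D[a,D]_+$ and commuting $D$ through $(1+D^2)^{-1}$ produces precisely the asserted right-hand sides; since all operators involved are bounded, the equality extends by continuity from the dense submodule $\Dom{D^2}$ to all of $\domS\cap\domT$. I expect the only genuine difficulty to be the domain bookkeeping of the second paragraph: a priori $\sigma_j S$ does not map $\domS\cap\domT$ into itself, and $[\sigma_j S,D]_-$ is not defined on all of $\sF(S,T)$, so the argument relies on Theorem~\ref{p.appl.1} providing the \emph{equality} $\Dom{D^2}=\Dom{S^2}\cap\Dom{T^2}$, not a mere inclusion, to guarantee that $(1+D^2)^{-1}$ lands inside a submodule on which all the products $S^2,T^2,ST,TS$ are simultaneously defined and the formal manipulations become legitimate.
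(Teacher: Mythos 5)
The final formulas you arrive at are the correct ones, and your identification of the relevant pairs ($a=\sigma_jS$ with $[\cdot,\cdot]_-$, $a=\sigma_jT$ with $[\cdot,\cdot]_+$) matches the paper. But the route you take to them has a genuine domain gap. Your key step is the sandwich identity $[(1+D^2)\ii,a]_-=(1+D^2)\ii[a,D^2]_-(1+D^2)\ii$ on $\Dom{D^2}$. Writing this out, the term $D^2\,a\,(1+D^2)\ii x$ requires $\sigma_jS(1+D^2)\ii x\in\Dom{D^2}$; equivalently one must apply $D=S+\go T$ twice to $S(1+D^2)\ii x$, which involves compositions of the type $S^3(1+D^2)\ii x$ and $TS^2(1+D^2)\ii x$. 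The equality $\Dom{D^2}=\Dom{S^2}\cap\Dom{T^2}\subset\sF(S,T)$ from Theorem~\plref{p.appl.1} is second-order information and does not deliver this third-order domain mapping property; nor does the Leibniz expansion $[a,D^2]_-=[a,D]_-D+D[a,D]_-$ help, since its individual summands evaluated on $\Dom{D^2}$ run into the same undefined compositions (e.g.\ $D[\sigma_jS,D]_-y$ needs $[S,T]y\in\Dom{D}$, which is not known). So the hypotheses of \Eqref{eq.A.5} are \emph{not} met for $b=D$, $a=\sigma_jS$ in the literal operator sense, and the phrase ``one checks that the precise hypotheses are met'' is exactly where the argument breaks.

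The paper avoids forming $[a,D^2]_-$ altogether. It first records (from the proof of Theorem~\plref{p.appl.1}) that the pairs $(D,\sigma_jS)$ are weakly \emph{commuting} and $(D,\sigma_jT)$ weakly \emph{anticommuting} --- this, rather than the domain equality for $D^2$, is the domain input actually used. It then factors $(1+D^2)\ii=(D+i)\ii(D-i)\ii$ and applies the \emph{first-order} resolvent identity \Eqref{eq.A.4} to each factor separately; each application only needs $(D\pm i)\ii$ to map $\Dom{\sigma_jS}$ (resp.\ $\Dom{\sigma_jT}$) into $\sF(D,\sigma_jS)$ (resp.\ $\sF(D,\sigma_jT)$), which is precisely what weak (anti)commutativity guarantees. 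Averaging the resulting $+i$ and $-i$ versions and using $(D+i)\ii+(D-i)\ii=2D(1+D^2)\ii$ produces the symmetric formulas \eqref{eq.A.5.2S}, \eqref{eq.A.5.2T} directly on $\domS\cap\domT$, with no continuity-extension step needed. If you want to keep your structure, replace the appeal to \Eqref{eq.A.5}/\eqref{eq.A.6} by this two-step first-order argument; also note that your closing ``extends by continuity'' would in any case have to be continuity in the graph norm of $D$, not the operator norm, since the left-hand sides contain the unbounded factor $\sigma_jS$.
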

\begin{proof} 
Recall from the proof of Theorem \ref{p.appl.1} that
the pair $(D, \sigma_j S)$ is weakly commuting for $j=1,2$
and that the pair $(D,\sigma_j T)$ is weakly anticommuting for $j=1,2$. The commutator identities in Lemma \ref{p.A.Leibniz}  in Appendix \ref{s.A} apply including domains
with $b=D_\pm, a=\sigma_j S$ or $a=\sigma_j T$ and $\gl\in i\R\setminus\{0\}$.
We prove \Eqref{eq.A.5.2S} using  the resolvent identities
\[
(1+D^{2})^{-1}=(D+i)^{-1}(D-i)^{-1}=(D-i)^{-1}(D+i)^{-1}.
\]
The Leibniz rules of Lemma \ref{p.A.Leibniz} give the identities
\begin{align*}
[(1+D^{2})^{-1},\sigma_{j} S]_{-}&=(1+D^{2})^{-1}[\sigma_{j} S, D]_{-}(D\pm i)^{-1}+(D\mp i)^{-1}[\sigma_{j} S, D]_{-}(1+D^{2})^{-1},
\end{align*}
\begin{align*}
[(1+D^{2})^{-1},\sigma_{j} T]_{-}&=(1+D^{2})^{-1}[\sigma_{j} T, D]_{+}(D\pm i)^{-1}-(D\mp i)^{-1}[\sigma_{j} T, D]_{+}(1+D^{2})^{-1},
\end{align*}
%\begin{align*}
%[(1+D^{2})^{-1},\sigma_{j} S]_{-}&=(1+D^{2})^{-1}[\sigma_{j} S, D]_{-}(D+i)^{-1}+(D-i)^{-1}[\sigma_{j} S, D]_{-}(1+D^{2})^{-1},
%\end{align*}
on $\Dom{S}\cap\Dom{T}$.
Averaging these equalities for $\pm i$ and using that
\[
(D+i)^{-1}+(D-i)^{-1}=2D(1+D^{2})^{-1},
\]
then give us \Eqref{eq.A.5.2S}  and \Eqref{eq.A.5.2T} on $\Dom{S}\cap\Dom{T}$.
\end{proof}
\begin{lemma} \label{p.KK.10}
Recalling the notation $D_\pm=S\pm\go T$ we have for
$\mu>0$ the equality of operators
\begin{multline}\label{eq.KK.13}
   \go R_{\mu} = (1+\mu^2D^2_+)\ii [S,T](1+\mu^2D_-^2)\ii  \\
                    +\mu D_-(1+\mu^2D^2_+)\ii \ovl{[S,T]}(1+\mu^2D_-^2)\ii \mu D_-.
\end{multline}
This amounts to an equality
\begin{multline}
\label{eq.KK.14}\go R_{\mu}=(1+\mu^2D^2_-)\ii [S,T](1+\mu^2D_-^2)\ii \\
       +\mu D_-(1+\mu^2D^2_-)\ii \ovl{[S,T]}(1+\mu^2D_-^2)\ii \mu D_-
\end{multline}
modulo an $A$-locally compact perturbation that is uniformly bounded in $\mu$.
\end{lemma}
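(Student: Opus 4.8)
The plan is to establish \eqref{eq.KK.13} as an exact identity of densely defined operators on the common core $\domS\cap\domT$, and then to deduce \eqref{eq.KK.14} from it by a perturbation estimate. Throughout I write $r_\pm:=(1+\mu^2D_\pm^2)\ii$ and use that $\go=\sigma_3$ commutes with $S,T$ (hence with $D_\pm$, $D_\pm^2$, $r_\pm$), that $\go^2=1$, that $\mu^2D_\pm^2r_\pm=1-r_\pm$, and the algebraic identity $D_+^2-D_-^2=2\go[S,T]$. First I would multiply $R_\mu$ by $\go$ on the left: since $\go$ commutes with $r_+$ and with $S,T$, this collapses on $\domS\cap\domT$ to $\go R_\mu=Tr_+S+Sr_+T$ (recall $D=D_+$). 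Writing $r_+S=Sr_+ +[r_+,S]_-$ and $r_+T=Tr_+ +[r_+,T]_-$ and combining the two resolvent-free contributions into $(ST+TS)r_+=[S,T]r_+$ reduces matters to $[r_+,S]_-$ and $[r_+,T]_-$; and since $D_+=S+\go T$ commutes with $r_+$ one has $[r_+,S]_-=-\go[r_+,T]_-$, whence $\go R_\mu=[S,T]r_+ +D_-[r_+,T]_-$, so only the single commutator $[r_+,T]_-$ remains to be computed.

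That computation is where the preceding Lemma does the work. Neither $(D,S)$ nor $(D,T)$ is a weakly (anti)commuting pair, but $(D,\sigma_jS)$ is weakly commuting and $(D,\sigma_jT)$ is weakly anticommuting for $j=1,2$, so \eqref{eq.A.5.2T}, after the rescaling $S\mapsto\mu S$, $T\mapsto\mu T$ (which replaces $(1+D^2)\ii$ by $r_+$ and $[\cdot,D]_\pm$ by $\mu^2[\cdot,D]_\pm$), provides a closed formula for $[r_+,\sigma_jT]_-$. Writing $T=\sigma_j(\sigma_jT)$ and exploiting that for $j=1,2$ the generator $\sigma_j$ anticommutes with $\sigma_3$, hence conjugates $r_+$ to $r_-$ (i.e. $\sigma_jr_+=r_-\sigma_j$), together with the Clifford collapse $[\sigma_jT,D]_+=\sigma_j[S,T]$, converts the formula for $[r_+,\sigma_jT]_-$ into an expression for $[r_+,T]_-$ in which the second resolvent is $r_-$. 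Substituting this into the identity $\go R_\mu=[S,T]r_+ +D_-[r_+,T]_-$ and simplifying with $S\pm\go T=D_\pm$, $\mu^2D_-^2r_-=1-r_-$ and $D_-r_-=r_-D_-$ (to cancel the spurious $[S,T]r_+$ terms and bring the outer $D_-$ into place) telescopes the whole expression to $r_+[S,T]r_- +\mu^2D_-r_+[S,T]r_-D_-$, which is \eqref{eq.KK.13}. Every step is legitimate at the level of densely defined operators because Theorem \plref{p.appl.1} gives $\Dom{D_\pm^2}=\Dom{S^2}\cap\Dom{T^2}\subset\sF(S,T)$, so $ST$, $TS$ and $[S,T]$ act on the ranges of $r_\pm$ and of $r_\pm D_\pm$.

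For \eqref{eq.KK.14} I would only compare the two right-hand sides: they differ exactly in that the leftmost $r_+$ becomes $r_-$, so the difference is $(r_+-r_-)[S,T]r_- +\mu D_-(r_+-r_-)\ovl{[S,T]}r_-\mu D_-$. Now $r_+-r_-=-K_\mu$, and by the preceding Lemma $K_\mu$ and $D_\pm K_\mu$ are $A$-locally compact and uniformly bounded in $\mu$; using either the factored form $K_\mu=2\mu^2r_+[S,T]r_-$ or the uniform bound on $D_\pm K_\mu$, each summand becomes a composition of an $A$-locally compact factor carrying enough powers of $\mu$ with adjointable factors of norm $O(\mu\ii)$ — here one uses $\|\mu D_\pm r_\pm\|\le1$, $\|(1+|D_\pm|)r_\pm\|=O(\mu\ii)$ and $\|\ovl{[S,T]}(1+|D_\pm|)\ii\|<\infty$ (Proposition \plref{estimates}). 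Hence the difference is $A$-locally compact and uniformly bounded in $\mu$, so it integrates over $(0,1]$ to an $A$-locally compact operator, which is exactly \eqref{eq.KK.14}.

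I expect the main obstacle to be the bookkeeping in the second step: choosing the right index $j$, tracking the Clifford prefactors and signs through $\sigma_jr_+=r_-\sigma_j$, $\sigma_j$-conjugation of $D_\pm$ and $[\sigma_jT,D]_+=\sigma_j[S,T]$, and verifying that after simplification the combination really telescopes to \eqref{eq.KK.13} with the stated asymmetric pattern $r_+[S,T]r_-$ and $D_-$ on both flanks — in particular that no $[S,T]_-$ survives and the outer $D_-$ lands correctly. The domain verifications, by contrast, are routine once Theorem \plref{p.appl.1} is available.
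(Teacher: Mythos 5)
Your overall architecture is sound and is essentially the paper's: reduce to the controlled (anti)commutators $[\sigma_jT,D]_+=\sigma_j[S,T]$ via the preceding Lemma, use $\sigma_jr_+=r_-\sigma_j$ to account for the appearance of $D_-$ and $r_-$, and obtain \eqref{eq.KK.14} from \eqref{eq.KK.13} by the $K_\mu$-Lemma exactly as the paper does (your treatment of that last step is fine). Your reduction $\go R_\mu=[S,T]r_+ + D_-[r_+,T]_-$, obtained from $[r_+,D]_-=0$, is also correct and is a nice alternative to the paper's device of inserting $\sigma_2(\cdot)\sigma_2$ into each summand of $\go R_\mu$ separately.

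There is, however, a concrete gap in the step you yourself flag as the crux, and as described it produces the wrong answer. Conjugating \Eqref{eq.A.5.2T} by $\sigma_j$ does \emph{not} give $[r_+,T]_-=r_+T-Tr_+$: since $[r_+,\sigma_jT]_-=\sigma_j\bl r_-T-Tr_+\br$, what you obtain is the \emph{mixed} difference
\[
   r_-T-Tr_+ \;=\; \mu^2\Bl r_-[S,T]\,D_+r_+ \;-\; D_-r_-[S,T]\,r_+\Br ,
\]
in which the \emph{left} resolvent has been flipped to $r_-$, not the right one. The quantity you actually need differs from this by
\[
   [r_+,T]_- - \bl r_-T-Tr_+\br \;=\; (r_+-r_-)T \;=\; -K_\mu T,
\]
and this correction is not negligible: if you drop it, the assembly yields $r_-[S,T]r_+ + D_-r_-[S,T]r_+D_+$, i.e.\ the second summand ends in $D_+$ rather than $D_-$, which is not \eqref{eq.KK.13}. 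To repair it you must split $D_+=D_-+2\go T$ in the term $D_-r_-[S,T]D_+r_+$ and observe that the resulting piece $2\go D_-r_-[S,T]r_+T$ cancels against $D_-(r_+-r_-)T=-2\go\mu^2D_-r_+[S,T]r_-T$ precisely because $K_\mu=K_\mu^*$, i.e.\ $r_+[S,T]r_-=r_-[S,T]r_+$ (the equality of \eqref{eq.KK.4} and \eqref{eq.KK.5}); the same identity is then needed once more to pass from $r_-[S,T]r_+$ to the stated $r_+[S,T]r_-$. (Note also that $K_\mu$ carries a factor $\go=\sigma_3$, since $D_+^2-D_-^2=2\go[S,T]$.) So the telescoping does go through, but only after importing $-K_\mu T$ and the self-adjointness of $K_\mu$ as explicit ingredients; neither appears in your outline. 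The paper avoids this bookkeeping by keeping the trailing $\sigma_2$ attached to each summand until the very end, so that moving it to the right converts the rightmost $r_+$ and $D_+$ into $r_-$ and $D_-$ automatically, with no mixed differences ever arising.
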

\begin{proof}
Note that by definition \Eqref{eq.KK.10} $R_\mu=R(\mu,S,T)$ 
is a rational function of $\mu$ and the (non-commuting) variables 
$S,T$, and we have the relation $R(1,\mu S, \mu T) = \mu^2 R(\mu,S,T)$.
The same is true for the right hand side of \Eqref{eq.KK.13}. Hence
it suffices to prove the claim for $\mu = 1$. It then follows
in general by replacing $S, T, D$ by $\mu S, \mu T, \mu D$ resp.

We have $\go R:=\go R_1 = S(1+D^2)\ii T + T (1+D^2)\ii S$. For the first summand
we calculate on $\domS\cap\domT$ using the commutator identity \Eqref{eq.A.5.2T} and
the Leibniz rule
\begin{align*}
    S(1+D^2)\ii T & = S (1+D^2)\ii \sigma_2 T \sigma_2  \\
      & = \sigma_2 ST (1+D^2)\ii \sigma_2 
            + S [(1+D^2)\ii,\sigma_2 T]_- \sigma_2  \\
            &=\sigma_2 ST (1+D^2)\ii \sigma_2 
            - SD(1+D^{2})^{-1}[ \sigma_2 T, D]_{+}(1+D^{2})^{-1}\sigma_2 \\&\qquad\qquad\qquad + S(1+D^{2})^{-1}[\sigma_2 T, D]_{+} D(1+D^{2})^{-1}\sigma_2&
\end{align*}
and for the second summand using \Eqref{eq.A.5.2S}
\begin{align*}
    T(1+D^2)\ii S & = T (1+D^2)\ii \sigma_2 S \sigma_2 \\
      & = \sigma_2 TS (1+D^2)\ii \sigma_2 
            + T [(1+D^2)\ii,\sigma_2 S]_{-} \sigma_2\\
            &=\sigma_2 TS (1+D^2)\ii \sigma_2 + TD(1+D^{2})^{-1}[ \sigma_j S, D]_{-}(1+D^{2})^{-1}\\ &\qquad\qquad\qquad +T(1+D^{2})^{-1}[\sigma_j S, D]_{-} D(1+D^{2})^{-1}\sigma_{2},
\end{align*}
Adding up and using the identities
\begin{align*}
         D\go = \go D &= \go S + T,\\
         D \sigma_j & = \sigma_j D_-, \quad j=1,2\\
       [\sigma_j S, D ]_- & = - \go \sigma_j [S,T]_+, \quad j=1,2\\
       [\sigma_j T, D ]_+ & = \sigma_j [S,T]_+, \quad j=1,2
\end{align*}

we find
\begin{align*}
    \go R & = \sigma_2 [S,T]_+ (1+D^2)\ii \sigma_2 \\
     &\quad  - D^2(1+D^2)\ii\sigma_2 [S,T]_+ (1+D^2)\ii \sigma_2\\
     & \quad + D_- (1+D^2)\ii \sigma_2[S,T]_+ D(1+D^2)\ii \sigma_2.
\end{align*}
Noting that $1-D^2(1+D^2)\ii = (1+D^2)\ii$ allows to combine the
first and the second summand. Moving the first $\sigma_2$ to the
far right replaces $D$ by $D_-$ in between. Altogether this gives
\[
  \go R= (1+D^2)\ii [S,T]_+ (1+D_-^2)\ii +D_- (1+D^2)\ii [S,T]_+ (1+D_-^2)\ii 
\]
whence the first claim.

By Lemma \ref{eq.KK.4} we can replace $(1+\mu^2D_+^2)\ii $ by
$(1+\mu^2D_-^2)\ii $ in both summands of $R_\mu$ at the cost of an error term that is
uniformly bounded in $\mu$. The latter expression thus equals
\begin{equation*}
(1+\mu^2D^2_-)\ii [S,T](1+\mu^2D_-^2)\ii +\mu D_-(1+\mu^2D^2_-)\ii [S,T](1+\mu^2D_-^2)\ii \mu D_-,
\end{equation*}
modulo $A$-locally compact perturbations that are uniformly bounded in $\mu$.
\end{proof}

We arrive at the following Proposition, which completes the proof of Theorem \ref{thm.unbddprd}.
\begin{prop} \label{localestimate}
Let $E$ be a Hilbert $C^{*}$-module over $B$ and let
$A\to\sL(E)$ be a\linebreak $*$-homomorphism.  Furthermore, let $(S,T)$ be a
weakly anticommuting pair such that $D_{\pm}=S\pm T$ has $A$-locally compact
resolvent.  Then for every $\kappa>0$, $(S,T)$ can be rescaled so that for
$\chi(x):=\frac{2}{\pi}\arctan (x)$ the operators $\chi(S)$ and $\chi(D_+)$
satisfy the operator estimate 
\[
[\chi(S),\chi(D)]\geq -\kappa,
\]
up to an $A$-locally compact perturbation.
\end{prop}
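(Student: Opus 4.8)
The plan is to follow $[\chi(S),\chi(D)]$ through the reduction already carried out just before \eqref{eq.KK.12}. There it is observed that the nonnegative summands in \eqref{eq.KK.8} may be dropped, that $P_\gl,Q_\gl$ are contractions, and that $K\mapsto P_\gl KP_\gl$, $K\mapsto Q_\gl KQ_\gl$ preserve $A$-local compactness by \cite[Lemma 4.3]{MR2016}; hence it suffices to show that for every $\eps>0$ some rescaling of $(S,T)$ makes
\[
   \int_0^1 R_\mu\,d\mu \;\ge\; -\eps \qquad\text{modulo $A$-locally compact operators.}
\]
I would prove this by rewriting $R_\mu$ via Lemma \ref{p.KK.10}, applying the one-sided form estimate \eqref{eq.KK.2} of Proposition \ref{estimates} (whose constant becomes arbitrarily small under rescaling), and — the decisive step — integrating in $\mu$ \emph{before} passing to operator norms.

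Set $G_\mu:=(1+\mu^2D_-^2)\ii$ and $H_\mu:=\mu D_-G_\mu$; these are bounded self-adjoint operators commuting with $D_-$, hence with $|D_-|$, and one checks $G_\mu^2+H_\mu^2=G_\mu$. In this notation \eqref{eq.KK.14} says $\go R_\mu=G_\mu\ovl{[S,T]}G_\mu+H_\mu\ovl{[S,T]}H_\mu$ modulo an $A$-locally compact term that is \emph{uniformly bounded in $\mu$}, so that its integral over $[0,1]$ is again $A$-locally compact. Since $\go$ is a self-adjoint unitary commuting with $S,T,D_\pm$ and with the representation of $A$ — whence $\go$ preserves the $A$-locally compact operators — multiplying by $\go$ and using that $\go$ commutes with $G_\mu,H_\mu$ gives
\[
   R_\mu \;=\; G_\mu\bigl(\go\,\ovl{[S,T]}\bigr)G_\mu \;+\; H_\mu\bigl(\go\,\ovl{[S,T]}\bigr)H_\mu
\]
modulo an $A$-locally compact, $\mu$-integrable remainder. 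Now $\go\,\ovl{[S,T]}=-\ovl{[S,-\go T]_+}$ is (minus) the anticommutator form of the weakly anticommuting pair $(S,-\go T)$, whose sum is $D_-$, so \eqref{eq.KK.2} applied to that pair reads $-\go\,\ovl{[S,T]}\le C(1+|D_-|)$ on $\domS\cap\domT$, i.e.
\[
   \go\,\ovl{[S,T]} \;\ge\; -C\,(1+|D_-|),
\]
with $C$ that can be made as small as we like after replacing $(S,T)$ by $(\gl S,\gl T)$ for small $\gl>0$, by the rescaling clause of Proposition \ref{estimates}. Since $G_\mu,H_\mu$ are self-adjoint and commute with $1+|D_-|$, sandwiching and using $G_\mu^2+H_\mu^2=G_\mu$ gives $R_\mu \ge -C\,(1+|D_-|)(1+\mu^2D_-^2)\ii$ modulo the $A$-locally compact remainder.

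It then remains to integrate this lower bound in $\mu$ and only afterwards estimate. By the continuous functional calculus,
\[
   \Bigl\|\int_0^1 (1+|D_-|)\,(1+\mu^2D_-^2)\ii\,d\mu\Bigr\|
      \;\le\; \sup_{v\ge 0}\, (1+v)\!\int_0^1\!\frac{d\mu}{1+\mu^2v^2}
      \;=\; \sup_{v>0}\, \frac{(1+v)\arctan v}{v} \;<\; 2,
\]
so $\int_0^1 R_\mu\,d\mu\ge -2C$ modulo $A$-locally compact operators; choosing the rescaling with $C<\eps/2$ finishes the argument, and then $[\chi(S),\chi(D)]\ge -\tfrac{\pi^2}{2}\eps$ modulo $A$-locally compact operators via \eqref{eq.KK.12}. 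The main obstacle is precisely this last manoeuvre: the individual operators $G_\mu\ovl{[S,T]}G_\mu$ have norm of order $1/\mu$ as $\mu\to 0$ and are \emph{not} uniformly small, so no term-by-term estimate can work; one must instead exploit that $\int_0^1(1+\mu^2v^2)\ii\,d\mu=v\ii\arctan v=O(1/v)$, which is exactly what absorbs the unbounded weight $1+|D_-|$ and renders the merely relatively bounded anticommutator $[S,T]$ harmless. The only remaining care is the bookkeeping with $\go$ and with the $\mu$-uniformly bounded $A$-locally compact error terms produced by Lemma \ref{p.KK.10} and by the lemma establishing \eqref{eq.KK.4}, which is what lets ``modulo $A$-locally compact'' survive the integration over $\mu$.
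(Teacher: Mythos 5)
Your proposal is correct and follows essentially the same route as the paper: reduce via \eqref{eq.KK.12} to bounding $\int_0^1 R_\mu\,d\mu$ from below, rewrite $R_\mu$ with Lemma \ref{p.KK.10} in terms of $D_-$-resolvents, apply the rescalable estimate of Proposition \ref{estimates} to get $R_\mu\ge -C(1+|D_-|)(1+\mu^2D_-^2)\ii$ modulo uniformly bounded $A$-locally compact errors, and only then integrate in $\mu$ using the $\arctan$ identity. Your bookkeeping (the identity $G_\mu^2+H_\mu^2=G_\mu$ and the explicit bound $\sup_{v>0}(1+v)\arctan(v)/v<2$) is in fact slightly cleaner than the paper's, which silently drops the constant term in $1+|D_-|$ before invoking $\arctan(|D_-|)\le\pi/2$.
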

\begin{proof}
Let $\eps>0$ and rescale $(S,T)$ so that the operator estimate \eqref{eq.KK.3} holds true.
We apply Lemma \ref{p.KK.10} to $\hat S, \hat T$. Then the upper left corner
of the corresponding $\hat R_\mu$ gives up to $A$-locally compact
perturbations which are uniformly bounded in $\mu$: 
%Using Equation \eqref{eq.KK.14} and Proposition \ref{estimates} gives us the operator estimate
\begin{align*}
 R_{\mu}&=(1+\mu^2D^2_-)\ii [S,T](1+\mu^2D_-^2)\ii +\mu D_-(1+\mu^2D^2_-)\ii [S,T](1+\mu^2D_-^2)\ii \mu D_-\\
&\geq -\eps(1+ |D_-|)((1+\mu^2D^2_-)^{-2}+(1+\mu^2D^2_-)\ii )\\
&\geq -2\eps (1+ |D_-|)(1+\mu^2D^2_-)\ii \\
&=-2\eps |D_-|(1+\mu^2D^2_-)\ii.
\end{align*}
\Cf Equation \eqref{eq.KK.12} we have, modulo $A$-locally compact perturbations that 
\begin{align*}
\frac{4}{\pi^2}[\chi(D_+),\chi(S)] 
      &=\int_0^{1}P_\gl \left(\int_0^{1}R_{\mu}d\mu\right)P_\gl d\gl 
                   + \int_0^{1}Q_\gl \left(\int_0^{1}R_{\mu}d\mu\right)Q_\gl d\gl\\
      &\geq -4\eps\int_0^{1} |D_-|(1+\mu^2D^2_-)\ii d\mu\\
      &\geq -2\pi\eps,
\end{align*}
since
\[
   \pm \arctan (|D_-|)=\pm \int_0^{1}|D_-|(1+\mu^2D_-^2)\ii d\mu\leq \frac{\pi}{2}.
\]
Thus, choosing $\eps=\frac{2\kappa}{\pi^{3}}$ and rescaling $(S,T)$
according to Proposition \ref{estimates} yields that $[\chi(D_+),\chi(S)]\geq
-\kappa$ modulo $A$-locally compact perturbations.
\end{proof}
%Let $(\sA,X,S)$ and $(\sB,Y,T)$ be unbounded Kasparov modules. Suppose that on the internal tensor product module $X\otimes_{B} Y$

%Write $s=S\otimes 1$. Suppose that $t$ is a self-adjoint regular operator on $X\otimes_{B}Y$ that satisfies the connection condition, has bounded commutators with a dense subalgebra of $A$ and weakly anticommutes with $s$. We wish to show that the bounded transform of $D=s+t$ satisfies the Connes-Skandalis positivity condition with respect to the bounded transform of $s$. For practical purposes we use the function $\chi(D):=\frac{2}{\pi}\tan\ii (D)$, which admits the integral expression
%\[\frac{2}{\pi}\tan\ii (D)=\frac{2}{\pi}\int_0^{1}D(1+\gl^2D^2)\ii d\gl.\]

\appendix
\section{Commutator identities}
\label{s.A}
 
We collect here some useful identities for (graded) commutators. In the sequel
$a,b,c,\ldots$ denote elements in a unital $\C$--algebra. This section is concerned
only with algebraic identities. When applying to unbounded operators the
equality of domains needs to be checked separately.

Recall from Sections \ref{ss.WAO}, \ref{ss.CPNWA} 
\begin{equation}\label{eq.A.1}
    [a,b]_\tau := a\cdot b + \tau b\cdot a, \quad \tau\in \{+, -\}.
\end{equation}

\begin{lemma}\label{p.A.Leibniz}
For $\sigma,\tau\in\{+,-\}$ one has the Leibniz rules
\begin{align}
 \label{eq.A.2}   [a,b\cdot c]_\tau & = [a,b]_\sigma\cdot c - \sigma b\cdot [a,c]_{-\sigma\tau},\\
 \label{eq.A.3}  [a\cdot b, c]_\tau & = a\cdot [b,c]_\sigma - \sigma [a,c]_{-\sigma\tau} \cdot b.
\end{align}
\end{lemma}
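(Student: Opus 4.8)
The plan is to prove both Leibniz rules \eqref{eq.A.2} and \eqref{eq.A.3} by direct expansion. These are purely formal identities in an abstract unital $\C$-algebra: there is no convergence and no domain issue to worry about, since the remark preceding the lemma already defers domain bookkeeping to the point of application. So the entire argument is a disciplined manipulation of three-term sums.

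First I would handle \eqref{eq.A.2}. Expanding the left-hand side from the definition $[x,y]_\tau = xy + \tau yx$ gives $[a,b\cdot c]_\tau = abc + \tau\,bca$. On the right-hand side, $[a,b]_\sigma\cdot c = abc + \sigma\,bac$, while $[a,c]_{-\sigma\tau} = ac - \sigma\tau\,ca$, so that $-\sigma b\cdot[a,c]_{-\sigma\tau} = -\sigma\,bac + \sigma^2\tau\,bca$. Adding the two pieces, the cross terms $\pm\sigma\,bac$ cancel and, using $\sigma^2 = 1$ (recall $\sigma\in\{+,-\}$), what remains is $abc + \tau\,bca$, which matches the left-hand side. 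The only point requiring care is the sign bookkeeping: one must keep track that the surviving coefficient is $\sigma^2\tau = \tau$ and not, say, $-\tau$.

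For \eqref{eq.A.3} I see two routes. The first is to repeat the same short computation: $[a\cdot b,c]_\tau = abc + \tau\,cab$, whereas $a\cdot[b,c]_\sigma = abc + \sigma\,acb$ and $-\sigma[a,c]_{-\sigma\tau}\cdot b = -\sigma\,acb + \sigma^2\tau\,cab$, so the sum is again $abc + \tau\,cab$. The second, slightly more economical route is to observe that \eqref{eq.A.3} is precisely \eqref{eq.A.2} read in the opposite algebra $A^{\mathrm{op}}$ after the relabeling $(a,b,c)\mapsto(c,b,a)$: reversing the product sends $[x,y]_\tau$ to $[y,x]_\tau$ in a controlled way, and one need only check that the parameters $\sigma$ and $-\sigma\tau$ land in the correct slots, which they do. I would probably present the direct expansion for self-containedness and mention the opposite-algebra symmetry as a remark.

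In summary, there is no genuine obstacle here — the statement is a formal identity and the proof is three lines of expansion for each part. The sole thing that can go wrong is a sign slip, so the ``hard part'' is really just remembering that $\sigma^2 = \tau^2 = 1$, which collapses the cross terms and fixes the surviving coefficients.
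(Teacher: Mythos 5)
Your proposal is correct and follows exactly the paper's approach: the paper simply states that the lemma "follows immediately by expanding the left and right hand sides," and your computation carries out that expansion (with the correct sign bookkeeping via $\sigma^2=1$). Nothing further is needed.
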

This follows immediately by expanding the left and right hand sides.

\begin{lemma}\label{p.A.ResolventCommutator}
Assume that for $\gl\in\C$ the element $\pm\gl+b$ resp. $\gl+b^2$ is invertible;
$\gl$ is an abbreviation for $\gl\cdot\one$. Then
\begin{align}
 \label{eq.A.4}   (\gl + b )\ii a  &= a (\gl - \tau b)\ii - (\gl+b)\ii [b,a]_\tau (\gl - \tau b)\ii,\\
 \label{eq.A.5}   [ (\gl + b^2 )\ii, a]_-  &=  (\gl + b^2)\ii b [a,b]_- (\gl+b^2)\ii\nonumber \\
                            & \qquad +(\gl + b^2)\ii  [a,b]_- b(\gl+b^2)\ii,\\
 \label{eq.A.6}   [ (\gl + b^2 )\ii, a]_-  &=  - (\gl + b^2)\ii b [a,b]_+ (\gl+b^2)\ii\nonumber \\
                            & \qquad + (\gl + b^2)\ii  [a,b]_+ b(\gl+b^2)\ii.
\end{align}
\end{lemma}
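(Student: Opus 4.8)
The plan is to verify each of the three identities by clearing denominators and expanding, in exactly the spirit of Lemma~\ref{p.A.Leibniz}; there is no analytic content, since everything takes place in a fixed unital $\C$-algebra in which the indicated elements are invertible.

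\textbf{Identity \eqref{eq.A.4}.} I would multiply the asserted equality on the left by $\gl+b$ and on the right by $\gl-\tau b$ (both invertible by the standing assumption), which reduces the claim to $a(\gl-\tau b) = (\gl+b)a - [b,a]_\tau$. Expanding $[b,a]_\tau = ba + \tau ab$ from \eqref{eq.A.1}, the right-hand side equals $\gl a + ba - ba - \tau ab = \gl a - \tau ab = a(\gl-\tau b)$, which is what was needed.

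\textbf{Identities \eqref{eq.A.5} and \eqref{eq.A.6}.} Here I would first record the elementary resolvent--commutator identity: for any invertible $x$ one has $[x\ii, a]_- = x\ii[a,x]_- x\ii$, obtained by multiplying through by $x$ on both sides. Applying this with $x = \gl + b^2$ and using $[a, \gl + b^2]_- = [a, b^2]_-$ gives $[(\gl+b^2)\ii, a]_- = (\gl+b^2)\ii [a,b^2]_- (\gl+b^2)\ii$. It then remains to expand $[a,b^2]_-$, which I would do in two ways using the Leibniz rule \eqref{eq.A.2} with the factorization $b^2 = b\cdot b$: the choice $\sigma = -$ yields $[a,b^2]_- = [a,b]_- b + b[a,b]_-$, hence \eqref{eq.A.5}, while the choice $\sigma = +$ yields $[a,b^2]_- = [a,b]_+ b - b[a,b]_+$, hence \eqref{eq.A.6}. (Equivalently, \eqref{eq.A.6} follows from \eqref{eq.A.5} via the direct check $[a,b]_- b + b[a,b]_- = ab^2 - b^2 a = [a,b]_+ b - b[a,b]_+$.)

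There is no genuine obstacle: the entire lemma is a routine algebraic verification, and the only thing requiring care is bookkeeping with the sign convention $[a,b]_\tau = ab + \tau ba$ from \eqref{eq.A.1} and with the placement of the resolvent factors. As flagged in the preamble to this appendix, when \eqref{eq.A.4}--\eqref{eq.A.6} are later applied to unbounded operators one must separately check that the domains on the two sides agree; that is not part of the present purely algebraic statement.
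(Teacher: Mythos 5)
Your proposal is correct and follows essentially the same route as the paper: \eqref{eq.A.4} is verified by clearing the two resolvents and expanding $[b,a]_\tau$, and \eqref{eq.A.5}--\eqref{eq.A.6} are obtained by conjugating $[a,b^2]_-$ with $(\gl+b^2)\ii$ and expanding that commutator via the Leibniz rule with the two choices of sign. The only cosmetic difference is that you derive the identity $[x\ii,a]_-=x\ii[a,x]_-x\ii$ directly, whereas the paper obtains it as the $\tau=-$ special case of \eqref{eq.A.4} applied to $b^2$.
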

\begin{proof}We have
\[
    (\gl+b) a = ba + \tau ab - \tau ab + a\gl = [b,a]_\tau + a (\gl - \tau b).
\]
Now multiply from the left by $(\gl+b)\ii$ and from the right by 
$(\gl-\tau b)\ii$ to obtain the first identity.

The second and third identity follow by applying the first identity to
$b^2, a$ and then the Leibniz rule to $[b^2,a]_- = b[b,a]_-+[b,a]_-b 
= b[b,a]_+ - [b,a]_+ b$.
\end{proof}

\bibliography{mlbib,localbib}
\bibliographystyle{amsalpha-lmp}

\end{document}